\newcommand{\cmark}{\ding{51}}%
\newcommand{\xmark}{\ding{55}}%
\numberwithin{equation}{section}
\newtheorem{thm}{Theorem}[section]
\newaliascnt{prp}{thm}
\newtheorem{prp}[prp]{Proposition}
\newaliascnt{cor}{thm}
\newtheorem{cor}[cor]{Corollary}
\theoremstyle{definition}
\newaliascnt{dfn}{thm}
\newtheorem{dfn}[dfn]{Definition}
\newaliascnt{qst}{thm}
\newtheorem{qst}[qst]{Question}
\newaliascnt{ex}{thm}
\newtheorem{ex}[ex]{Example}
\newcommand{\C}{\mathrel{\Cap}}
\newcommand{\pp}{\mathrel{\prec\hspace{-4pt}\prec}}
\newcommand\subsetsim{\mathrel{\substack{\textstyle\subset\\[-0.2ex]\textstyle\sim}}}
\newcommand\ssubset{\mathrel{\mathrlap{\subset}\hphantom{\ll}\mathllap{\subset}}}
\author{Tristan Bice}
\email{Tristan.Bice@gmail.com}
\thanks{The first author is supported by IMPAN through a WCMCS postdoctoral grant}
\author{Charles Starling}
\thanks{The second author is supported by the NSERC grants of Beno\^it Collins, Thierry Giordano, and Vladimir Pestov.}
\thanks{This collaboration began at the Fields Institute ``Workshop on Dynamical Systems and Operator Algebras'' and ``Workshop on New Directions in Inverse Semigroups'' held at the University of Ottawa in May-June 2016.}
\keywords{locally compact topology, relatively compact basis, Stone space, first order axiomatization, tight spectrum/representation, Boolean algebra, separative poset}
\subjclass[2010]{03C65, 06E15, 06E75, 06B35, 54D45, 54D70, 54D80}
\title{Locally Compact Stone Duality}
\begin{document}

\begin{abstract}
We prove a number of dualities between posets and (pseudo)bases of open sets in locally compact Hausdorff spaces.  In particular, we show that
\begin{enumerate}
\item Relatively compact basic sublattices are finitely axiomatizable.
\item Relatively compact basic subsemilattices are those omitting certain types.
\item Compact clopen pseudobasic posets are characterized by separativity.
\end{enumerate}
We also show how to obtain the tight spectrum of a poset as the Stone space of a generalized Boolean algebra that is universal for tight representations.
\end{abstract}

\maketitle

\section*{Introduction}

\subsection*{Background}

A number of dualities exist between classes of lattices and topological spaces.  Those most relevant to the present paper are summarized below.

\vspace{-10pt}
\begin{figure}[h]
\caption{Dualities}\label{Dualities}
\vspace{5pt}\begin{tabular}{|r|c|c|c|c|r|}
\hline Topology\hspace{50pt} & Lattice & $1^\mathrm{st}$-Ord & $\vee=\cup$ & Reference\\ \hline
$0$-Dimensional Compact Hausdorff & Boolean & \cmark & \cmark & \cite{Stone1936}\\
Compact Hausdorff & Normal Disjunctive & \cmark & \cmark & \cite{Wallman1938}\\
Locally Compact Hausdorff & $R$-Lattice & \cmark & \xmark & \cite{Shirota1952}\\
Locally Compact Sober\hspace{18pt} & Continuous Frame & \xmark & \cmark & \cite{HofmannLawson1978}\\ \hline
\end{tabular}
\end{figure}
The most well studied dualities are the first and the last.  Indeed, Boolean algebras have a very long history and there is also a considerable amount of literature on both domains/continuous lattices (see \cite{GierzHofmannKeimelLawsonMisloveScott2003}) and locales/frames (see \cite{PicadoPultr2012}).  However, one key feature of the former which is not shared by the latter is that Boolean algebras are first order structures.  More precisely, Boolean algebras are defined by a finite list of first order sentences in a language with a single binary relation $\leq$.  On the other hand, both domains and frames require some degree of completeness, which requires quantification over subsets, making these second order rather than first order structures.  Moreover, domains are defined by the way-below relation, while frames require infinite distributivity, both of which are also undeniably second order.  The unfortunate consequence of this is that classical first order model theory can not be applied to domains or frames as it is to Boolean algebras.

Furthermore, while the duality between sober spaces and spatial frames has its origins in Stone duality (see \cite{Johnstone1986}), point-free topology is more accurately described as a close analogy rather than a direct generalization of Stone duality.  Indeed, even for zero-dimensional $X$, the entire open set lattice $\mathcal{O}(X)$ is much larger than its clopen sublattice.  In fact, $\mathcal{O}(X)$ is uncountable for any infinite Hausdorff $X$, which explains why frames have no first order description, as this would contradict the downward L\"{o}wenheim-Skolem theorem.

While less well known, \cite{Shirota1952}\footnote{We would like to thank Tomasz Kania for directing our attention to this article.} deals with both of these issues, at least for locally compact Hausdorff spaces.  Indeed, \cite[Definition 2]{Shirota1952} describes $R$-lattices as those satisfying a finite list of first order sentences, albeit in a language with two relations $\leq$ and $\ll$ and ternary function implicit in part v) (although we believe a more careful axiomatization could be given just in terms of $\ll$).  Moreover, the $R$-lattices where $\ll$ is reflexive and which also have a maximum are precisely the Boolean algebras, so \cite[Theorem 1]{Shirota1952} is a direct generalization of Stone's original duality.  The only issue here is that $R$-lattices represent relatively compact basic (i.e. forming a basis in the usual topological sense) sublattices of $\mathcal{RO}(X)(=$ \emph{regular} open subsets of $X$) rather than $\mathcal{O}(X)$.  This means joins are not unions, specifically $O\vee N=\overline{O\cup N}^\circ$ rather than $O\cup N$.  Alternatively, we could consider the lattice elements as representing regular closed sets instead, but then $O\wedge N=\overline{(O\cap N)^\circ}$ rather than $O\cap N$.  As topological properties are usually expressed in terms of $\cup$ and $\cap$, this makes $R$-lattices somewhat less appealing for doing first order topology.

\subsection*{Outline}

Our first goal is thus to modify the axioms of $R$-lattices (see \autoref{BasicLatticeDfn}) so as to axiomatize relatively compact basic sublattices of $\mathcal{O}(X)$ instead.  This is the content of \autoref{BL}-\autoref{SS}, as summarized in \autoref{basiclat}.  We then extend this to an equivalence of categories in \autoref{I}, taking appropriate relations as our basic lattice morphisms (see \autoref{categoryequiv}).  Next we consider relatively compact basic meet subsemilattices of $\mathcal{O}(X)$ in \autoref{BS}.  Here a finite axiomatization is not possible, as explained at the end of \autoref{SS}, however we show that they can still be characterized by omitting types, as summarized in \autoref{semichar}.

In \autoref{T}-\autoref{TTS}, we consider a different generalization of classic Stone duality where we extend the bases rather than the spaces under consideration.  Specifically, we consider `pseudobases' (see \autoref{pseudobasisdef}) of compact clopen sets in (necessarily) zero-dimensional locally compact Hausdorff $X$.  Here we show that p0sets(=posets with minimum $0$) which arise from such pseudobases can be finitely axiomatized just by separativity (which goes under various names \textendash\, see the note at the end of \autoref{TTS}), and that $X$ can still be recovered from any compact clopen pseudobasis as its tight spectrum, as summarized in \autoref{pseudochar}.  Also, we use a well known set theoretic construction to define a generalized Boolean algebra from any p0set that is universal for tight representations (see \autoref{Universality}).  This allows us to identify the tight spectrum of the p0set with the Stone space of the algebra, providing a different take on some of the theory from \cite{Exel2008}.

\subsection*{Related Work}

Given the classical nature of the results in \autoref{Dualities}, it seems that a description of these dualities is well overdue.  Previous literature certainly gets close describing basic lattices, as most of the axioms already appear in \cite{Shirota1952} and a first order analog of the way-below relation was already considered in \cite{Johnstone1986} \textendash\, the only extra step really needed was to consider it as a fully-fledged replacement (see the comments after \autoref{basiclatticeproperties}).  Also, the idea of representing continuous functions by certain relations, as in \autoref{I}, already appears in formal topology (see \cite{CirauloMaiettiSambin2013}), where constructions similar to those in \autoref{BS} also appear.  There have also been various other extensions of Stone duality, often in the context of categorical or continuous rather than classical logic and/or based on the ring structure of $C(X,\mathbb{C})$, the lattice structure of $C(X,\mathbb{R})$ or the MV-algebra structure of $C(X,[0,1])$ (see \cite{KaniaRmoutil2016}, \cite{MarraReggio2015}, \cite{Russo2016} and the references therein).

There are also non-commutative extensions, e.g. classic Gelfand duality allows us to see C*-algebras as non-commutative locally compact Hausdorff spaces, which have recently been investigated from a continuous model theoretic point of view (see \cite{FarahHartSherman2014} and \cite{FHLRTVW2016}).  Inverse semigroups provide a different non-commutative generalization (see \cite{KudryavtsevaLawson2016}), although they are closely related (see \cite{Exel2008}).  In fact, our original motivation was to define combinatorial C*-algebras from inverse semigroups in a more general way to include C*-algebras with few projections.  The present paper can be viewed as the commutative case \textendash\, we hope to elaborate on the non-commutative case in forthcoming papers.

\section{Basic Lattices}\label{BL}

Assume $\prec$ is a transitive relation on set $B$ with minimum $0$, i.e.
\begin{align}
\label{Minimum}\tag{Minimum}\forall x\ (&0\prec x).\hspace{70pt}\\
\label{Transitivity}\tag{Transitivity}x\prec y\prec z\quad\Rightarrow\quad& x\prec z.\\
\intertext{Define a preorder $\preceq$ and symmetric relations $\perp$ and $\Cap$ by}
\label{Reflexivization}\tag{Reflexivization}x\preceq y\quad\Leftrightarrow\quad&\forall z\prec x\ (z\prec y).\\
\label{Intersects}\tag{Intersects}x\C y\quad\Leftrightarrow\quad&\exists z\neq 0\ (z\prec x,y).\\
\label{Disjoint}\tag{Disjoint}x\perp y\quad\Leftrightarrow\quad&\nexists z\neq 0\ (z\prec x,y).\\
\intertext{Note that the definition of $\preceq$ and the transitivity of $\prec$ immediately yields}
\label{LeftAuxiliarity}\tag{Left Auxiliarity}x\prec z\preceq y\quad\Rightarrow\quad& x\prec y.\\
\label{Domination}\tag{Domination}x\prec y\quad\Rightarrow\quad& x\preceq y.
\end{align}

\begin{dfn}\label{BasicLatticeDfn}
We call $(B,\prec)$ a \emph{basic lattice} if $(B,\preceq)$ is also a lattice and
\begin{align}
\label{Coinitiality}\tag{Coinitiality}\forall x\neq0\ &(x\mathrel{\Cap}x).\\
\label{Cofinality}\tag{Cofinality}\forall x\ \exists y\ &(x\prec y).\\
\label{Interpolation}\tag{Interpolation}x\prec y\quad\Rightarrow\quad&\exists z\ (x\prec z\prec y).\\
\label{Multiplicativity}\tag{Multiplicativity}x\prec x'\ \&\ y\prec y'\quad\Rightarrow\quad&x\wedge y\,\prec\,x'\wedge y'.\\
\label{Additivity}\tag{Additivity}x\prec x'\ \&\ y\prec y'\quad\Rightarrow\quad&x\vee y\,\prec\,x'\vee y'.\\
\label{Decomposition}\tag{Decomposition}z\prec x\vee y\quad\Rightarrow\quad&\exists x'\prec x\ \exists y'\prec y\ (z=x'\vee y').\\
\label{Complementation}\tag{Complementation}x\prec y\prec z\quad\Rightarrow\quad&\exists w\perp x\ (w\vee y=z).
\end{align}
\end{dfn}

As part of the definition of a lattice we take it that $\preceq$ is antisymmetric and hence a partial order.  Also, by \eqref{Coinitiality}, \eqref{LeftAuxiliarity} and \eqref{Domination}, we could replace $\prec$ with $\preceq$ in the definitions of $\C$ and $\perp$, i.e. for basic lattice $B$
\begin{align*}
x\C y\qquad&\Leftrightarrow\qquad x\wedge y\neq 0.\\
x\perp y\qquad&\Leftrightarrow\qquad x\wedge y=0.
\end{align*}

Most of the axioms above already appear in some form in \cite[Definition 2]{Shirota1952}.  The key extra axiom is \eqref{Decomposition}, which only applies to open set lattices, not regular open set lattices (the key axiom omitted is \eqref{Separativity} mentioned below, which only applies to regular open set lattices, not open set lattices).  Indeed, let $X$ be a locally compact Hausdorff space, so $X$ has a basis $B\subseteq\mathcal{O}(X)(=$ open subsets of $X)$ of relatively compact sets, i.e.
\begin{enumerate}
\item $\emptyset\in B\subseteq\mathcal{O}(X)$.
\item $\forall O\in\mathcal{O}(X)\ \forall x\in O\ \exists N\in B\ (x\in N\subseteq O)$.
\item $\overline{O}$ is compact, for all $O\in B$.
\end{enumerate}
Define $\subset$ on $B$ by
\[\label{waybelowdef}\tag{Compact Containment}O\subset N\qquad\Leftrightarrow\qquad\overline{O}\subseteq N.\]
Note then inclusion $\subseteq$ is indeed the relation defined from $\subset$ by \eqref{Reflexivization}.

\begin{prp}\label{BasicBasis}
If $B$ is a basis of relatively compact open sets that is closed under $\cup$ and $\cap$ then $(B,\subset)$ is a basic lattice.
\end{prp}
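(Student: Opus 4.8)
The plan is to verify, one clause at a time, that the concrete relation $\subset$ on $B$ satisfies \eqref{Minimum}, \eqref{Transitivity} and every axiom of \autoref{BasicLatticeDfn}, reading each symbol through its topological meaning. Two preliminary observations dispose of most of the bookkeeping. First, \eqref{Minimum} holds since $\overline{\emptyset}=\emptyset\subseteq N$ for all $N$, and \eqref{Transitivity} holds since $\overline O\subseteq N\subseteq\overline N\subseteq M$ forces $\overline O\subseteq M$. Second, the excerpt already records that ordinary inclusion $\subseteq$ is the reflexivization $\preceq$ of $\subset$ (the nontrivial direction of which uses the shrinking remark below); as $B$ contains $\emptyset$ and is closed under $\cup$ and $\cap$, this makes $(B,\preceq)$ a lattice with $\vee=\cup$ and $\wedge=\cap$. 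Reading off the definitions of $\C$ and $\perp$ and using the basis to shrink any nonempty open set to a nonempty relatively compact one, I would check directly that $O\C N\Leftrightarrow O\cap N\neq\emptyset$ and $O\perp N\Leftrightarrow O\cap N=\emptyset$, whence \eqref{Coinitiality} is immediate.

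The single tool feeding every remaining verification is the following consequence of local compactness and the basis hypothesis: for any open $U$ and any $x\in U$ there is $M\in B$ with $x\in M$ and $\overline M\subseteq U$ (take a relatively compact open $V$ with $x\in V\subseteq\overline V\subseteq U$, then a basic $M\subseteq V$, so $\overline M\subseteq\overline V\subseteq U$). Combined with compactness of the closures $\overline O$ and closure of $B$ under finite unions, this handles the ``easy'' axioms quickly: \eqref{Cofinality} and \eqref{Interpolation} follow by covering the compact set $\overline O$ by finitely many basic sets (with closures inside $N$, in the case of \eqref{Interpolation}) and taking the union in $B$; \eqref{Multiplicativity} and \eqref{Additivity} reduce to the elementary facts $\overline{O\cap N}\subseteq\overline O\cap\overline N$ and $\overline{O\cup N}=\overline O\cup\overline N$, bearing in mind $\wedge=\cap$ and $\vee=\cup$.

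The main obstacle is \eqref{Decomposition}, the axiom the text singles out as characteristic of open-set lattices. Given $Z\subset O\cup N$, so that $\overline Z\subseteq O\cup N$ with $\overline Z$ compact, I would cover $\overline Z$ by finitely many basic sets, each chosen via the shrinking tool to have closure inside $O$ or inside $N$, and collect the two kinds into unions $P,Q\in B$, so that $\overline Z\subseteq P\cup Q$, $\overline P\subseteq O$ and $\overline Q\subseteq N$. The crux is then to return to $B$ while hitting $Z$ exactly: putting $O'=Z\cap P$ and $N'=Z\cap Q$, closure of $B$ under $\cap$ gives $O',N'\in B$; the inclusions $\overline{O'}\subseteq\overline P\subseteq O$ and $\overline{N'}\subseteq\overline Q\subseteq N$ give $O'\subset O$ and $N'\subset N$; and $O'\cup N'=Z\cap(P\cup Q)=Z$ since $Z\subseteq\overline Z\subseteq P\cup Q$. \eqref{Complementation} yields to the same device: given $O\subset N\subset M$, the set $\overline M\setminus N$ is compact and contained in the open set $X\setminus\overline O$, so I would cover it by a finite union $P\in B$ of basic sets with closures in $X\setminus\overline O$ and set $W=M\cap P\in B$; then $W\cap O=\emptyset$, while every point of $M$ lies in $N$ or in $M\setminus N\subseteq\overline M\setminus N\subseteq P$, giving $W\cup N=M$. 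The only genuine subtlety throughout is this need to stay inside $B$, which is precisely why intersecting a finite basic cover back with the ambient set $Z$ (respectively $M$) — rather than working with the obvious but non-basic sets $Z\cap O$ or $M\setminus\overline O$ — is the correct move.
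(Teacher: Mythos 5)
Your proposal is correct and follows essentially the same route as the paper: for \eqref{Decomposition} and \eqref{Complementation} the paper likewise covers the relevant compact set ($\overline{O}$, respectively $\overline{O}\setminus N$ in its notation) by finitely many basic sets compactly contained in the appropriate pieces, takes the union in $B$, and intersects back with the ambient basic set — exactly your $Z\cap P$, $Z\cap Q$ and $M\cap P$ device. The remaining axioms, which the paper leaves as an exercise, you verify correctly via the same local-compactness shrinking argument.
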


\begin{proof} We prove the last two properties and leave the rest as an exercise.
\begin{itemize}
\item[\eqref{Decomposition}]  If $O\subset M\cup N$ then, for each $x\in\overline{O}$, we have $O_x\in B$ with $O_x\subset M$ or $O_x\subset N$.  As $\overline{O}$ is compact, $(O_x)$ has a finite subcover $F$.  Let
\[M'=O\cap\bigcup_{\substack{O'\in F\\ O'\subset M}}O'\qquad\text{and}\qquad N'=O\cap\bigcup_{\substack{O'\in F\\ O'\subset N}}O'.\]
Then $M'\subset M$, $N'\subset N$ and $O=M'\cup N'$, as required.

\item[\eqref{Complementation}]  If $M\subset N\subset O$ then, for each $x\in\overline{O}\backslash N$, we have $O_x\in B$ with $O_x\cap\overline{M}=\emptyset$.  As $\overline{O}\backslash N$ is compact, $(O_x)$ has a finite subcover $F$.  Letting $L=O\cap\bigcup F$, we have $L\cap M=\emptyset$ and $L\cup N=O$.  \qedhere
\end{itemize}
\end{proof}

In the next section we show that all basic lattices arise in this way from locally compact Hausdorff spaces.  Here we just note some more properties of basic lattices.

\begin{prp}\label{basiclatticeproperties}
Any basic lattice $B$ satisfies the following.
\begin{align}
\label{Distributivity}\tag{Distributivity}z\preceq x\vee y\quad\Leftrightarrow\quad& z\preceq(x\wedge z)\vee(y\wedge z).\\
\label{RatherBelow}\tag{Rather Below}x\prec y\quad\Leftrightarrow\quad&\forall z\ \exists w\perp x\ (z\preceq w\vee y).
\end{align}
\end{prp}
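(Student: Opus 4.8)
The plan is to treat \eqref{Distributivity} and \eqref{RatherBelow} separately, in each case reducing to the structural axioms \eqref{Decomposition} and \eqref{Complementation}. For \eqref{Distributivity}, the backward implication is pure lattice theory: from $x\wedge z\preceq x\preceq x\vee y$ and $y\wedge z\preceq y\preceq x\vee y$ one gets $(x\wedge z)\vee(y\wedge z)\preceq x\vee y$, so anything below the left side is below $x\vee y$ by transitivity of $\preceq$. The real content is the forward implication $z\preceq x\vee y\Rightarrow z\preceq(x\wedge z)\vee(y\wedge z)$, which by \eqref{Reflexivization} I would unfold to: prove $w\prec(x\wedge z)\vee(y\wedge z)$ for every $w\prec z$.

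So I would fix such a $w$ and try to split it along $x\vee y$ using \eqref{Decomposition}. The obstacle is that decomposition only produces pieces $\prec x$ and $\prec y$, whereas to recombine them against $z$ via \eqref{Multiplicativity} I need pieces that are \emph{strictly} below $z$, and $w\prec z$ alone yields its sub-pieces only $\preceq z$. I expect this gap between $\preceq$ and $\prec$ to be the crux of the whole proposition, and I would close it by interpolating and decomposing twice. Concretely, use \eqref{Interpolation} to get $v$ with $w\prec v\prec z$; then $v\prec x\vee y$ by \eqref{LeftAuxiliarity}, so \eqref{Decomposition} gives $v=a\vee b$ with $a\prec x$ and $b\prec y$. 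Since $w\prec v=a\vee b$, a second application of \eqref{Decomposition} gives $w=a'\vee b'$ with $a'\prec a$ and $b'\prec b$. As $a\preceq v$, \eqref{LeftAuxiliarity} upgrades $a'\prec a\preceq v$ to $a'\prec v$, whence $a'\prec z$ by \eqref{Transitivity}, while $a'\prec a\prec x$ gives $a'\prec x$; then \eqref{Multiplicativity} yields $a'=a'\wedge a'\prec x\wedge z$, and symmetrically $b'\prec y\wedge z$. Finally \eqref{Additivity} delivers $w=a'\vee b'\prec(x\wedge z)\vee(y\wedge z)$, as needed.

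For \eqref{RatherBelow} I would run the forward direction through \eqref{Complementation}. Given $x\prec y$ and arbitrary $z$, pick $y'\succ y$ by \eqref{Cofinality} and set $Z=y'\vee z$; since $y\prec y'\preceq Z$ gives $y\prec Z$ by \eqref{LeftAuxiliarity}, the chain $x\prec y\prec Z$ lets \eqref{Complementation} produce $w\perp x$ with $w\vee y=Z$, and then $z\preceq Z=w\vee y$ is immediate. For the converse I would instead exploit \eqref{Decomposition}: assuming the right-hand condition, fix $z$ with $x\prec z$ by \eqref{Cofinality} and apply the hypothesis to this $z$, obtaining $w\perp x$ with $z\preceq w\vee y$. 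Then $x\prec z\preceq w\vee y$ gives $x\prec w\vee y$ by \eqref{LeftAuxiliarity}, so \eqref{Decomposition} yields $x=w'\vee y'$ with $w'\prec w$ and $y'\prec y$. Now $w'\preceq w$ by \eqref{Domination} and $w'\preceq x$, so $w'\preceq w\wedge x=0$ (as $w\perp x$), forcing $w'=0$ and hence $x=y'\prec y$. This recovers $x\prec y$ and completes the equivalence.
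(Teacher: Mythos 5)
Your proof is correct, and your treatment of \eqref{RatherBelow} coincides with the paper's argument step for step: the same use of \eqref{Cofinality}, \eqref{LeftAuxiliarity} and \eqref{Complementation} in the forward direction, and of \eqref{Cofinality}, \eqref{LeftAuxiliarity}, \eqref{Decomposition} and \eqref{Domination} in the converse. The only divergence is in the forward direction of \eqref{Distributivity}, where the obstacle you identify as the crux \textendash\, that \eqref{Decomposition} applied to $w\prec z\preceq x\vee y$ gives pieces $x'\prec x$, $y'\prec y$ that are only $\preceq z$ rather than $\prec z$ \textendash\, is in fact no obstacle at all. The paper decomposes $w$ just once, writing $w=x'\vee y'$, and then applies \eqref{Multiplicativity} to the two relations $x'\prec x$ and $w\prec z$ themselves, obtaining $x'=x'\wedge w\prec x\wedge z$ (using $x'\preceq w$); symmetrically $y'=y'\wedge w\prec y\wedge z$, and \eqref{Additivity} finishes. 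In other words, \eqref{Multiplicativity} does not require you to pair each sub-piece with another relation involving that same sub-piece: you may pair it with $w\prec z$ directly. Your workaround \textendash\, interpolating $w\prec v\prec z$, decomposing $v$, then decomposing $w$ against that decomposition and using idempotence of $\wedge$ in \eqref{Multiplicativity} \textendash\, is valid at every step, but it costs an extra appeal to \eqref{Interpolation} and a second application of \eqref{Decomposition} that the paper's one-line trick renders unnecessary.
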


\begin{proof}\
\begin{itemize}
\item[\eqref{Distributivity}]  $\Leftarrow$ is immediate.  Conversely, by \eqref{LeftAuxiliarity} and \eqref{Decomposition}, for any $w\prec z$, we have $x'\prec x$ and $y'\prec y$ with $w=x'\vee y'$.  By \eqref{Multiplicativity}, $x'=x'\wedge w\prec x\wedge z$ and $y'=y'\wedge w\prec y\wedge z$.  By \eqref{Additivity}, $w=x'\vee y'\prec(x\wedge z)\vee(y\wedge z)$.  As $w\prec z$ was arbitrary, $z\preceq(x\wedge z)\vee(y\wedge z)$.
\item[\eqref{RatherBelow}]  For the $\Rightarrow$ part, assume $x\prec y$.  By \eqref{Cofinality}, we have $y'\succ y$.  By \eqref{LeftAuxiliarity}, $y\prec y'\vee z$, for any $z\in B$.  By \eqref{Complementation}, we have $w\perp x$ with $w\vee y=y'\vee z\succeq z$, as required.

Conversely, take $x,y\in B$ satisfying the right hand side.  By \eqref{Cofinality}, we have $z\succ x$ and then we can take $w\perp x$ with $z\preceq w\vee y$.  By \eqref{LeftAuxiliarity}, $x\prec w\vee y$.  By \eqref{Decomposition}, we have $y'\prec y$ and $w'\prec w$ with $x=w'\vee y'$.  By \eqref{Domination}, $w'\preceq w\perp x$ so $w'=w'\wedge x=0$ and hence $x=y'\prec y$. \qedhere
\end{itemize}
\end{proof}

When $B$ has a maximum $1$, it suffices to take $z=1$ in \eqref{RatherBelow} which is the definition of the \emph{rather below} relation in \cite[Ch 5 \S5.2]{PicadoPultr2012} and the \emph{well inside} relation in \cite[III.1.1]{Johnstone1986}.  In any case, \autoref{basiclatticeproperties} shows that we could equivalently take $\preceq$ as the primitive relation in the definition of a basic lattice and define $\prec$ from $\preceq$ as in \eqref{RatherBelow}.\footnote{This contrasts with $R$-lattices in \cite{Shirota1952}, where $\leq$ can be defined from $\ll$ but not vice versa.}  Then the definition of $\preceq$ from $\prec$ at the start would become a defining property of a basic lattice instead.  Indeed, most treatments of continuous lattices take $\preceq$ as the primary notion and define $\prec$ as the way-below relation from $\preceq$, but we will soon see that there are good reasons to focus more on $\prec$.

The basic lattice axioms could also be reformulated in several ways.  For example, we could combine \eqref{Cofinality} and \eqref{Complementation} into
\[\label{<Below}\tag{$\prec$-Below}x\prec y\quad\Rightarrow\quad\forall z\ \exists w\perp x\ (z\prec w\vee y).\]
We could also replace $\Rightarrow$ with $\Leftrightarrow$ in \eqref{Decomposition} to combine it with \eqref{Additivity}.  Or we could replace \eqref{Decomposition} and \eqref{Interpolation} with \eqref{Distributivity} and
\[\label{vInterpolation}\tag{$\vee$-Interpolation}z\prec x\vee y\quad\Rightarrow\quad\exists x'\prec x\ \exists y'\prec y\ (z\prec x'\vee y').\]
Or we could even avoid the use of meets in \eqref{Multiplicativity}, as shown below.

\begin{prp}\label{nomeets}
If $B$ is a lattice satisfying \eqref{Cofinality} then \eqref{Interpolation},\\ \eqref{Multiplicativity} and \eqref{Additivity} are equivalent to the following.
\begin{align}
\label{RightAuxiliarity}\tag{Right Auxiliarity}x\preceq z\prec y\quad&\Rightarrow\quad x\prec y.\\
\label{RieszInterpolation}\tag{Riesz Interpolation}x,x'\prec y,y'\quad&\Rightarrow\quad\exists z\ (x,x'\prec z\prec y,y').
\end{align}
\end{prp}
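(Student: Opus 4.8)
The plan is to prove the equivalence \eqref{Interpolation}$\,\wedge\,$\eqref{Multiplicativity}$\,\wedge\,$\eqref{Additivity} $\Leftrightarrow$ \eqref{RightAuxiliarity}$\,\wedge\,$\eqref{RieszInterpolation} under the standing hypotheses that $B$ is a lattice satisfying \eqref{Cofinality}. I should remember that $\preceq$ is always defined from $\prec$ via \eqref{Reflexivization}, so \eqref{LeftAuxiliarity} and \eqref{Domination} hold for free and I may use them throughout on both sides of the equivalence. The whole point of \eqref{RightAuxiliarity} and \eqref{RieszInterpolation} is that they make no reference to $\wedge$, so the content is that the meet-free axioms recover exactly the same relation $\prec$ as the meet-based ones.

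Let me work through the individual implications. First I would derive \eqref{RightAuxiliarity} from the left-hand package: given $x\preceq z\prec y$, use \eqref{Cofinality} to pick $z'\succ z$; then $x\preceq z$ and $z\prec z'$ give $x\prec z'$ by \eqref{LeftAuxiliarity}, and now $x\prec z'$ together with $z\prec y$ feeds into \eqref{Multiplicativity} applied to $x\wedge z$, after noting $x\preceq z$ forces $x\wedge z=x$; a little care with how \eqref{Multiplicativity} combines the two relations yields $x=x\wedge z\prec z'\wedge y$, and since $z'\wedge y\preceq y$, one more application of \eqref{LeftAuxiliarity} gives $x\prec y$. For \eqref{RieszInterpolation}, given $x,x'\prec y,y'$, apply \eqref{Multiplicativity} to get $x\wedge x'\prec y\wedge y'$, interpolate by \eqref{Interpolation} to find $w$ with $x\wedge x'\prec w\prec y\wedge y'$, and use \eqref{Domination} plus \eqref{RightAuxiliarity} (or \eqref{LeftAuxiliarity}) to slot $w$ between the two pairs; the meet $w\wedge\cdots$ may need adjustment so that the interpolant dominates both $x$ and $x'$, which is where I expect to lean on \eqref{Additivity} via $x\vee x'$ rather than $x\wedge x'$. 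Indeed the cleaner route is: by \eqref{Additivity}, $x\vee x'\prec y$ and $x\vee x'\prec y'$, hence $x\vee x'\prec y\wedge y'$ by \eqref{Multiplicativity}, then interpolate; the resulting $z$ satisfies $x,x'\preceq x\vee x'\prec z$ so \eqref{RightAuxiliarity} gives $x,x'\prec z$, and $z\prec y\wedge y'\preceq y,y'$ gives $z\prec y,y'$ by \eqref{LeftAuxiliarity}.

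For the converse I would assume \eqref{RightAuxiliarity} and \eqref{RieszInterpolation} and recover the three meet-based axioms. \eqref{Interpolation} is the degenerate case $x=x'$, $y=y'$ of \eqref{RieszInterpolation}, so it is immediate. For \eqref{Multiplicativity}, given $x\prec x'$ and $y\prec y'$, I want $x\wedge y\prec x'\wedge y'$; since $x\wedge y\preceq x\prec x'$, \eqref{RightAuxiliarity} gives $x\wedge y\prec x'$, and symmetrically $x\wedge y\prec y'$, so $x\wedge y\prec x',y'$, and now \eqref{RieszInterpolation} applied to the single lower element $x\wedge y$ against the pair $x',y'$ produces $z$ with $x\wedge y\prec z\prec x',y'$; from $z\prec x'$ and $z\prec y'$ and \eqref{Reflexivization} I get $z\preceq x'\wedge y'$, whence $x\wedge y\prec z\preceq x'\wedge y'$ closes via \eqref{LeftAuxiliarity}. \eqref{Additivity} is dual: from $x\prec x'$ and \eqref{LeftAuxiliarity} with $x'\preceq x'\vee y'$ I get $x\prec x'\vee y'$, likewise $y\prec x'\vee y'$, so $x,y\prec x'\vee y'$; here I need $x\vee y\prec x'\vee y'$, and the natural tool is \eqref{RieszInterpolation} to interpolate $z$ with $x,y\prec z\prec x'\vee y'$ followed by checking $x\vee y\preceq z$ from $x,y\prec z$ via \eqref{Reflexivization}, then \eqref{RightAuxiliarity} delivers $x\vee y\prec x'\vee y'$.

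The main obstacle I anticipate is the \eqref{Additivity} direction, because \eqref{RieszInterpolation} as stated interpolates between two \emph{pairs} of elements, whereas here I have two lower elements $x,y$ and only one upper target $x'\vee y'$; the asymmetry means I must verify that $x,y\prec z$ genuinely forces $x\vee y\preceq z$, which unwinds to showing every $w\prec x\vee y$ satisfies $w\prec z$, and that in turn needs the fact that $x\vee y$ is the \emph{join} in the order $\preceq$ together with \eqref{Reflexivization}. I expect this to work smoothly once I note that $x\prec z$ and $y\prec z$ give $x,y\preceq z$ by \eqref{Domination}, hence $x\vee y\preceq z$ by the lattice join property, at which point \eqref{RightAuxiliarity} finishes it; the only delicate point is ensuring \eqref{Cofinality} is invoked wherever \eqref{RieszInterpolation} needs a genuine upper bound to interpolate beneath, but since the targets here are explicit this should not bite.
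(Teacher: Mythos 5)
Your derivation of \eqref{RightAuxiliarity} contains a circular step. Starting from $x\preceq z\prec y$, you apply \eqref{Cofinality} to get $z'\succ z$ and then assert that ``$x\preceq z$ and $z\prec z'$ give $x\prec z'$ by \eqref{LeftAuxiliarity}.'' But \eqref{LeftAuxiliarity} has the shape $a\prec b\preceq c\Rightarrow a\prec c$, with the strict relation coming \emph{first}; what you have is the shape $x\preceq z\prec z'$, and the inference $x\preceq z\prec z'\Rightarrow x\prec z'$ is precisely an instance of \eqref{RightAuxiliarity}, the very statement you are in the middle of proving. Nothing available at that point licenses it: the definition \eqref{Reflexivization} of $\preceq$ only constrains the elements strictly below $x$, so $x\preceq z\prec z'$ gives no information about $x$ itself lying $\prec$-below anything (this is exactly why \eqref{RightAuxiliarity} is a nontrivial consequence of \eqref{Multiplicativity} rather than a formal consequence of the definitions).

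The repair is a one-line change, and it is what the paper does: apply \eqref{Cofinality} to $x$ rather than to $z$, obtaining $w\succ x$. Then $x\prec w$ holds by choice of $w$, so \eqref{Multiplicativity} applied to $x\prec w$ and $z\prec y$ gives $x=x\wedge z\prec w\wedge y\preceq y$, and \eqref{LeftAuxiliarity}, now correctly applied (strict first, weak second), yields $x\prec y$. With this fix the remainder of your argument is sound and in fact coincides with the paper's proof: your route to \eqref{RieszInterpolation} (get $x\vee x'\prec y\wedge y'$ from \eqref{Additivity} and \eqref{Multiplicativity}, interpolate, then slide the endpoints using the two auxiliarity properties) and your converse direction (\eqref{Interpolation} as the degenerate case of \eqref{RieszInterpolation}; \eqref{Multiplicativity} via $x\wedge y\preceq x\prec x'$, \eqref{RightAuxiliarity}, \eqref{RieszInterpolation} and \eqref{Domination}; \eqref{Additivity} by the dual argument using \eqref{Domination}, the lattice join property and \eqref{RightAuxiliarity}) are the same arguments given in the paper.
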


\begin{proof}
If $B$ is lattice satisfying \eqref{Cofinality} and $x\preceq y\prec z$ then we have some $w\succ x$.  If $B$ satisfies \eqref{Multiplicativity} then $x=x\wedge y\prec w\wedge z\preceq z$ so $x\prec z$, by \eqref{LeftAuxiliarity}, i.e. \eqref{RightAuxiliarity} holds.  If $B$ also satisfies \eqref{Additivity} then $x,x'\prec y,y'$ implies $x\vee x'\prec y\wedge y'$.  Further assuming $B$ satisfies \eqref{Interpolation}, we have $z\in B$ with $x\vee x'\prec z\prec y\wedge y'$ and hence $x,x'\prec z\prec y,y'$, by \eqref{LeftAuxiliarity} and \eqref{RightAuxiliarity}, i.e. \eqref{RieszInterpolation} holds.

Conversely, assuming \eqref{RieszInterpolation} and \eqref{RightAuxiliarity}, if $x\prec x'$ and $y\prec y'$ then $x\wedge y\prec x',y'$ and hence $x\wedge y\prec z\prec x',y'$, for some $z\in B$.  By \eqref{Domination}, $z\preceq x',y'$ and hence $z\preceq x'\wedge y'$.  By \eqref{LeftAuxiliarity}, $x\wedge y\prec x'\wedge y'$ so we have \eqref{Multiplicativity}.\footnote{See \cite[Lemma I-3.26]{GierzHofmannKeimelLawsonMisloveScott2003} for this and other characterizations of \eqref{Multiplicativity}.}  In the same way we get \eqref{Additivity}.
\end{proof}

Lastly, let us note that when $\prec$ is reflexive, i.e. when $\prec$ coincides with $\preceq$, most of the basic lattice axioms are automatically satisfied.  Indeed, for a lattice $(B,\preceq)$ to be a basic lattice it need only satisfy \eqref{Decomposition}, which is then the same as \eqref{Distributivity}, and \eqref{Complementation} which, as we can take $x=y$, is saying that $B$ is \emph{section complemented} in the terminology of \cite{Stern1999}.  In other words,
\[(B,\preceq)\text{ is a basic lattice}\qquad\Leftrightarrow\qquad(B,\preceq)\text{ is a generalized Boolean algebra}.\]
So while we have no unary complement operation like in a true Boolean algebra, we do have a binary relative complement operation $x\backslash y$, i.e. satisfying
\[(x\backslash y)\wedge(x\wedge y)=0\qquad\text{and}\qquad(x\backslash y)\vee(x\wedge y)=x.\]

\section{Filters}

\begin{dfn}
For any transitive relation $\prec$ on $B$, we call $U\subseteq B$ a \emph{$\prec$-filter} if
\begin{align}
\label{UpwardsClosed}\tag{$\succ$-Closed}x\succ y\in U\qquad&\Rightarrow\qquad x\in U.\\
\label{DownwardsDirected}\tag{$\prec$-Directed}x,y\in U\qquad&\Rightarrow\qquad\exists z\in U\ (z\prec x,y).
\end{align}
We call $U\subsetneqq B$ a \emph{$\prec$-ultrafilter} if $U$ is maximal among proper $\prec$-filters.
\end{dfn}

Throughout the rest of this section,
\[\textbf{$B$ is an arbitrary but fixed basic lattice.}\]
We call $U\subseteq B$ \emph{$\prec$-coinitial}\footnote{In \cite[VII.4.2]{PicadoPultr2012}, ideals satisfying the dual notion are called \emph{regular} ideals, but we use regular later in a different sense closer to the usual notion of a regular open set.} if $U\subseteq U^\prec$ where
\[U^\prec=\{y\in B:\exists x\in U\ (x\prec y)\}.\]
So \eqref{Coinitiality} is just saying that $B\backslash\{0\}$ is $\prec$-coinitial.

\begin{prp}\label{preceqfilters}
If $U\subseteq B$ is $\preceq$-directed then $U^\prec$ is a $\prec$-filter.  Moreover
\begin{equation}\label{preccoinit}
U\text{ is a $\prec$-filter}\qquad\Leftrightarrow\qquad U\text{ is a $\prec$-coinitial $\preceq$-filter}.
\end{equation}
\end{prp}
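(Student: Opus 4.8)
The plan is to prove \autoref{preceqfilters} in two parts. First I would show that if $U$ is $\preceq$-directed then $U^\prec$ is a $\prec$-filter, and then establish the equivalence \eqref{preccoinit}.

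For the first claim, I need to verify that $U^\prec$ satisfies \eqref{UpwardsClosed} and \eqref{DownwardsDirected}. Upward closure under $\succ$ is essentially immediate from transitivity: if $z \in U^\prec$, so $x \prec z$ for some $x \in U$, and $z \prec w$, then $x \prec w$ by \eqref{Transitivity}, giving $w \in U^\prec$. (One should check $z \succeq w$ rather than $z \prec w$ suffices, using \eqref{LeftAuxiliarity}, since the filter axiom is stated with $\succ$, i.e.\ $\prec$ in reverse; here \eqref{LeftAuxiliarity} handles the mixed case $x \prec z \preceq w \Rightarrow x \prec w$.) For $\prec$-directedness, suppose $y, y' \in U^\prec$, witnessed by $x \prec y$ and $x' \prec y'$ with $x, x' \in U$. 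Since $U$ is $\preceq$-directed, there is $x'' \in U$ with $x'' \preceq x, x'$. The natural candidate for the common $\prec$-lower bound is obtained by interpolating: using \eqref{Cofinality} and \eqref{Interpolation} to produce some $z$ with $x'' \prec z$ and $z \prec y, y'$. I expect the cleanest route is to first get a single element below both $y$ and $y'$ via \eqref{RieszInterpolation} from \autoref{nomeets} (or directly via \eqref{Multiplicativity} on the meet $x'' \wedge x''$), then interpolate that element into $U^\prec$ using that $x''$ lies in $U$ and $x'' \preceq x, x'$ combined with \eqref{LeftAuxiliarity}.

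For the equivalence \eqref{preccoinit}, the backward direction is trivial: a $\prec$-coinitial $\preceq$-filter is in particular $\preceq$-closed, hence $\succ$-closed via \eqref{Domination}, and $\prec$-coinitiality gives that the directedness witnesses can be taken strictly below, upgrading $\preceq$-directedness to $\prec$-directedness. For the forward direction, suppose $U$ is a $\prec$-filter. I must show $U$ is $\preceq$-closed (not merely $\succ$-closed) and $\prec$-coinitial. Coinitiality $U \subseteq U^\prec$ should follow from \eqref{DownwardsDirected} applied to $x, x \in U$, yielding $z \in U$ with $z \prec x$, so $x \in U^\prec$. The $\preceq$-closure is the more delicate point: given $x \in U$ and $x \preceq y$, I want $y \in U$; taking $z \prec x$ with $z \in U$ by coinitiality, \eqref{LeftAuxiliarity} gives $z \prec y$, and then \eqref{UpwardsClosed} (in the form $z \prec y$, $z \in U \Rightarrow y \in U$) yields $y \in U$.

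The main obstacle I anticipate is the $\prec$-directedness of $U^\prec$ in the first part, specifically producing a single element of $U^\prec$ lying strictly below both $y$ and $y'$ while keeping it in $U^\prec$. The subtlety is that $\preceq$-directedness of $U$ only gives a $\preceq$-lower bound $x''$, which need not itself witness membership in $U^\prec$ of anything below $y, y'$; the fix is to interpolate below $x''$ using \eqref{Interpolation} after first placing $x''$ strictly below a common upper interpolant of $y$ and $y'$, so that \eqref{Transitivity} and \eqref{LeftAuxiliarity} can be chained to land the interpolant in $U^\prec$. Everything else reduces to routine applications of \eqref{Transitivity}, \eqref{LeftAuxiliarity}, and \eqref{Domination}.
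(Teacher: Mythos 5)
Your proposal is correct and follows essentially the same route as the paper's proof: transitivity gives $\succ$-closure of $U^\prec$; a common $\preceq$-lower bound $x''\in U$ is pushed strictly below $y,y'$ via auxiliarity and then \eqref{RieszInterpolation} to witness $\prec$-directedness; and the equivalence \eqref{preccoinit} is handled by the same routine \eqref{LeftAuxiliarity}/\eqref{Domination} arguments. One small note: the step $x''\preceq x\prec y\ \Rightarrow\ x''\prec y$ is an instance of \eqref{RightAuxiliarity} (available by \autoref{nomeets}) rather than \eqref{LeftAuxiliarity}, a mislabeling that the paper's own proof happens to make as well.
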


\begin{proof}
As $\prec$ is transitive, $U^\prec$ is \eqref{UpwardsClosed}.  If $U\ni x\prec x'$ and $U\ni y\prec y'$ then we have some $z\in U$ with $z\preceq x,y$, as $U$ is $\preceq$-directed.  By \eqref{LeftAuxiliarity}, $z\prec x',y'$ so, by \eqref{RieszInterpolation}, we have $z'\in B$ with $U\ni z\prec z'\prec x',y'$, i.e. $U^\prec$ is $\prec$-directed and hence a $\prec$-filter.

For the $\Rightarrow$ part of \eqref{preccoinit}, note any $\prec$-directed $U\subseteq B$ is $\prec$-coinitial and, by \eqref{Domination}, $\preceq$-directed.  And any $\prec$-coinitial $\succ$-closed $U\subseteq B$ is $\succeq$-closed, as $x\succeq y\in U$ implies $y\succ z\in U$, for some $z$, so $x\succ z\in U$, by \eqref{LeftAuxiliarity}, and hence $x\in U$.

Conversely, for the $\Leftarrow$ part of \eqref{preccoinit}, note any $\succeq$-closed $U\subseteq B$ is $\succ$-closed, by \eqref{Domination}.  And any $\prec$-coinitial $\preceq$-directed $U\subseteq B$ is $\prec$-directed, as any $x,y\in U$ satisfies $z\preceq x,y$, for some $z\in U$, and then $w\prec z$, for some $w\in U$, and hence $w\prec x,y$, again by \eqref{LeftAuxiliarity}.
\end{proof}

Ultrafilters in Boolean algebras can be characterized in a couple of first order ways as the proper prime filters or as the proper filters that intersect every complementary pair.  These characterizations generalize to basic lattices as follows.

We call a $\succ$-filter a \emph{$\prec$-ideal}.

\begin{prp}\label{ultrachars}
For non-empty $\prec$-filter $U\subsetneqq B$, the following are equivalent.
\begin{enumerate}
\item\label{ultra} $U$ is a $\prec$-ultrafilter.
\item\label{prime} $B\setminus U$ is a $\preceq$-ideal.
\item\label{compy} $B\setminus U=\{y\in B:\forall x\prec y\ \exists w\in U(w\perp x)\}$.
\end{enumerate}
\end{prp}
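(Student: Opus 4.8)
The plan is to run the cycle (1)$\Rightarrow$(3)$\Rightarrow$(2)$\Rightarrow$(1), writing $V$ for the set appearing in the third condition. Two preliminary facts will be used throughout. Since $0\prec x$ for every $x$ and $U$ is $\succ$-closed, a $\prec$-filter is proper precisely when $0\notin U$; and by \autoref{preceqfilters} a $\prec$-filter is the same as a $\prec$-coinitial $\preceq$-filter, so $U$ is $\succeq$-closed, $\preceq$-directed, and every element of $U$ has a $\prec$-predecessor in $U$. I would first record the easy inclusion $V\subseteq B\setminus U$, valid for any proper $\prec$-filter: if some $y\in V$ lay in $U$, coinitiality would give $x\in U$ with $x\prec y$, then membership in $V$ would supply $w\in U$ with $w\perp x$, and $\prec$-directedness of $U$ would produce a nonzero $z\in U$ with $z\prec x,w$, witnessing $x\C w$ and contradicting $w\perp x$.

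For (1)$\Rightarrow$(3) it then remains to prove $B\setminus U\subseteq V$, and this is the heart of the argument. I argue the contrapositive: if some $x\prec y$ meets every element of $U$ (that is, $\neg(w\perp x)$ for all $w\in U$), then $y\in U$. The naive idea of simply adjoining $x$ to $U$ fails because $\prec$ is not stable under meets, so one cannot directly manufacture a $\prec$-coinitial filter. The fix is to pass through $\preceq$: let $U'=\{z:\exists u\in U\ (u\wedge x\preceq z)\}$ be the ordinary $\preceq$-filter generated by $U\cup\{x\}$. It is $\preceq$-directed, it contains $U$ and $y$ (since $x\preceq y$ forces $u\wedge x\preceq y$), and it is proper because $u\wedge x\neq 0$ for all $u\in U$ by hypothesis. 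I then set $U''=(U')^{\prec}$, which by \autoref{preceqfilters} is a genuine $\prec$-filter; it lies inside $U'$ (as $\prec$ refines $\preceq$ by \eqref{Domination} and $U'$ is $\succeq$-closed), hence is still proper, it contains $U$, and crucially it contains $y$ because $x\in U'$ with $x\prec y$. Maximality of the $\prec$-ultrafilter $U$ now forces $U''=U$, so $y\in U$.

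Next, (3)$\Rightarrow$(2) amounts to checking that $V$ is a $\preceq$-ideal, that is, $\preceq$-downward closed and closed under $\vee$. Downward closure is immediate from \eqref{LeftAuxiliarity}. For closure under joins I would take $y_1,y_2\in V$ and $x\prec y_1\vee y_2$, apply \eqref{Decomposition} to split $x=x_1\vee x_2$ with $x_i\prec y_i$, pull back witnesses $w_i\in U$ with $w_i\perp x_i$, and use $\prec$-directedness of $U$ to find a single $w\in U$ below both $w_1,w_2$; then $w\perp x_1$ and $w\perp x_2$, and since $B$ is distributive (\autoref{basiclatticeproperties}) this gives $w\perp x_1\vee x_2=x$, so $y_1\vee y_2\in V$.

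Finally, (2)$\Rightarrow$(1): assuming $B\setminus U$ is a $\preceq$-ideal, $U$ is a proper prime $\prec$-filter, and I claim it is maximal. Given a proper $\prec$-filter $U'\supseteq U$ and some $a\in U'\setminus U$, coinitiality yields $x\in U'$ with $x\prec a$; feeding an arbitrary $z\in U$ into the \eqref{RatherBelow} characterisation of $x\prec a$ produces $w\perp x$ with $z\preceq w\vee a$, so $w\vee a\in U$, and primeness forces $w\in U$ because $a\notin U$. But then $w,x\in U'$ with $w\perp x$ contradicts $\prec$-directedness of the proper filter $U'$, so $U'=U$. The only genuinely delicate point is the construction in (1)$\Rightarrow$(3): everything else is a routine unwinding of the filter axioms, whereas there one must resist adjoining $x$ directly and instead take the $\prec$-interior $(U')^\prec$ of the generated $\preceq$-filter in order to recover coinitiality.
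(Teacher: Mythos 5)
Your proof is correct, and while it relies on the same core ingredients as the paper's (the axioms \eqref{Decomposition}, \eqref{Distributivity}, \eqref{RatherBelow}, and \autoref{preceqfilters}), it is organized differently and one step uses a genuinely different construction. The paper proves the four implications \eqref{ultra}$\Rightarrow$\eqref{compy}, \eqref{compy}$\Rightarrow$\eqref{ultra}, \eqref{prime}$\Rightarrow$\eqref{compy}, \eqref{compy}$\Rightarrow$\eqref{prime}, with \eqref{compy} as the hub, whereas you run the cycle \eqref{ultra}$\Rightarrow$\eqref{compy}$\Rightarrow$\eqref{prime}$\Rightarrow$\eqref{ultra}; your \eqref{prime}$\Rightarrow$\eqref{ultra} step is essentially the paper's two implications \eqref{prime}$\Rightarrow$\eqref{compy} and \eqref{compy}$\Rightarrow$\eqref{ultra} inlined into one argument. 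The real divergence is in \eqref{ultra}$\Rightarrow$\eqref{compy}: given $x\prec y$ with $x\C w$ for all $w\in U$, the paper directly defines $V=\{z\succeq v\wedge w:v\succ x\text{ and }w\in U\}$ (interpolating \emph{above} $x$) and verifies the $\prec$-filter axioms by hand via \eqref{RieszInterpolation} and \eqref{Multiplicativity}; you instead form the $\preceq$-filter $U'$ generated by $U\cup\{x\}$ and pass to its $\prec$-coinitial part $(U')^\prec$, letting \autoref{preceqfilters} certify that this is a $\prec$-filter. Your route buys economy \textemdash\ the filter axioms are checked once, in the general lemma, rather than re-verified for an ad hoc set \textemdash\ and your observation that $(U')^\prec\subseteq U'$ (via \eqref{Domination} and $\succeq$-closure of $U'$) cleanly transfers properness; the paper's route keeps the enlarged filter explicitly described in terms of $\prec$ and avoids the two-stage detour through $\preceq$. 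One small point worth making explicit in your write-up: $U\subseteq(U')^\prec$ is not by definition but because $U$, being a $\prec$-filter, is $\prec$-coinitial in itself \textendash\ you record this fact in your preliminaries but do not cite it where it is actually needed.
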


\begin{proof}\
\begin{itemize}
\item[\eqref{ultra}$\Rightarrow$\eqref{compy}]  For any proper $\prec$-filter $U\subseteq B$,
\[\{y\in B:\forall x\prec y\ \exists w\in U(w\perp x)\}\subseteq B\setminus U.\]
For the reverse inclusion, assume $y$ is not in the set on the left, so we have $x\prec y$ such that $w\Cap x$, for all $w\in U$.  Thus $y\in V\subsetneqq B$ and $U\subseteq V$ for
\[V=\{z\succeq v\wedge w:v\succ x\text{ and }w\in U\}.\]
If $v,v'\succ x$ and $w,w'\in U$ then, by \eqref{RieszInterpolation}, we have $v'$ with $v,v'\succ v''\succ x$ and, as $U$ is a $\prec$-filter, we also have $w''\in U$ with $w''\prec w,w'$.  By \eqref{Multiplicativity}, $v''\wedge w''\prec v\wedge w,v'\wedge w'$ so $V$ is a $\prec$-filter.  Thus if $U$ is a $\prec$-ultrafilter then $V=U$ and hence $y\in U$, as required.

\item[\eqref{compy}$\Rightarrow$\eqref{ultra}]  Assume \eqref{compy}.  If $U\subsetneqq V\subseteq B$, for some $\prec$-filter $V$, then we can take $y\in V\setminus U$.  As $V$ is $\prec$-coinitial, we have $x\in V$ with $x\prec y$.  By \eqref{compy}, we have $w\perp x$, for some $w\in U\subseteq V$.  As $V$ is a $\prec$-filter, $0=w\wedge x\in V$ so $V=B$.

\item[\eqref{prime}$\Rightarrow$\eqref{compy}]  Assume \eqref{prime} and take $z\in U$.  By \eqref{RatherBelow}, if $x\prec y\notin U$ then we have $w\perp x$ with $z\preceq w\vee y$ and hence $w\vee y\in U$.  If $w\notin U$ then, as $y\notin U$ and $B\setminus U$ is a $\preceq$-ideal, $w\vee y\notin U$, a contradiction.  Thus $w\in U$.

\item[\eqref{compy}$\Rightarrow$\eqref{prime}]  Assume \eqref{compy} and take $x,y\notin U$.  By \eqref{Decomposition}, for any $z\prec x\vee y$, we have $x'\prec x$ and $y'\prec y$ such that $z=x'\vee y'$.  By \eqref{compy}, we have $u',v'\in U$ with $u'\perp x'$ and $v'\perp y'$.  As $U$ is $\preceq$-directed, we have $w\in U$ with $w\preceq u',v'$ and hence $w\perp x',y'$.  By \eqref{Distributivity}, $w\perp z$ and hence $z\notin U$.  As $z$ was arbitrary, $x\vee y\notin U$.  As $x,y\notin U$ were arbitrary, $B\setminus U$ is a $\preceq$-ideal.\qedhere
\end{itemize}
\end{proof}

\section{Stone Spaces}\label{SS}

\begin{dfn}
The \emph{Stone Space} $\hat{B}$ of $(B,\prec)$ is the set of $\prec$-ultrafilters in $B$ with the topology generated by $(O_x)_{x\in B}$ where
\[O_x=\{U\in\hat{B}:x\in U\}.\]
\end{dfn}

It follows straight from the definitions that $(O_x)_{x\in B}$ not only generates the topology of $\hat{B}$, but is actually a basis for $\hat{B}$.  Indeed, if $U\in O_x\cap O_y$ then $x,y\in U$ so, as $U$ is a $\prec$-filter, we have $z\in U$ with $z\prec x,y$ and hence $U\in O_z\subseteq O_x\cap O_y$.

\begin{prp}\label{xtoBx}
If $X$ is a locally compact Hausdorff space and $B$ is a relatively compact basis of $X$ then $x\mapsto B_x$ is a homeomorphism from $X$ onto $\hat{B}$ where
\begin{equation}\label{bxdef}
B_x=\{O\in B:x\in O\}
\end{equation}
and $\prec$ is the relation $\subset$ on $B$ defined in \eqref{waybelowdef}.
\end{prp}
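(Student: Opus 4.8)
The plan is to show $\varphi\colon x\mapsto B_x$ is a well-defined bijection onto $\hat B$ that is continuous, open, and whose domain is Hausdorff (so that injectivity gives a homeomorphism). I would proceed in four stages: first verify each $B_x$ is a $\subset$-ultrafilter, then establish injectivity and surjectivity, and finally check that $\varphi$ carries the basis $(O)_{O\in B}$ of $X$ to the basis $(O_O)_{O\in B}$ of $\hat B$ in a way that is both continuous and open.

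\emph{$B_x$ is a $\subset$-ultrafilter.} Fix $x\in X$. That $B_x$ is $\supset$-closed is immediate from \eqref{waybelowdef}, since $O\subset N$ means $\overline O\subseteq N$, so $x\in O$ forces $x\in N$; and in fact $x\in O\subseteq N$ already suffices, which is exactly $\supseteq$-closure, hence $\succ$-closure by \eqref{Domination}. For $\subset$-directedness, if $O,N\in B_x$ then $x\in O\cap N$, an open set, so by the basis property (condition (2) in the definition of a relatively compact basis) there is $M\in B$ with $x\in M\subseteq O\cap N$; shrinking once more using local compactness and the basis, I can find $M'\in B$ with $x\in M'$ and $\overline{M'}\subseteq M\subseteq O\cap N$, giving $M'\subset O,N$ with $M'\in B_x$. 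Thus $B_x$ is a $\subset$-filter, and it is proper since $\emptyset\notin B_x$. To see it is an \emph{ultra}filter, I would use \autoref{ultrachars}\eqref{compy}: I must show $N\notin B_x$ exactly when for every $O\subset N$ there is $W\in B_x$ with $W\perp O$, i.e.\ $W\cap O=\emptyset$. If $N\notin B_x$ then $x\notin N$, and for any $O\subset N$ we have $x\notin\overline O$ (as $\overline O\subseteq N$), so by Hausdorffness and the basis there is $W\in B$ with $x\in W$ and $W\cap\overline O=\emptyset$, hence $W\cap O=\emptyset$; conversely if $N\in B_x$, taking any $O\subset N$ with $x\in O$ (which exists by the directedness argument) gives $x\in W\cap O$ for every $W\in B_x$, so no such disjoint $W$ exists. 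This verifies \eqref{compy}, so $B_x\in\hat B$.

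\emph{Bijectivity.} Injectivity is where Hausdorffness is essential: if $x\neq y$, choose disjoint open neighbourhoods, refine them to relatively compact basic sets $O\in B_x$, $N\in B_y$ with $O\cap N=\emptyset$; then $O\in B_x\setminus B_y$, so $B_x\neq B_y$. For surjectivity, let $U\in\hat B$ be a $\subset$-ultrafilter; I claim $\bigcap_{O\in U}\overline O\neq\emptyset$. Since $U$ is $\subset$-directed, the family $\{\overline O:O\in U\}$ has the finite intersection property (given $O,N\in U$ there is $M\in U$ with $M\subset O,N$, whence $\emptyset\neq M\subseteq\overline M\subseteq\overline O\cap\overline N$, using \eqref{Coinitiality} to see $M\neq\emptyset$). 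Fixing one $O_0\in U$, all these closed sets lie in the compact set $\overline{O_0}$, so compactness yields a point $x\in\bigcap_{O\in U}\overline O$. I would then argue $U=B_x$: clearly $U\subseteq B_x$ is not quite immediate since $x\in\overline O$ rather than $x\in O$, so instead I use that for $O\in U$ there is $N\in U$ with $N\subset O$, giving $x\in\overline N\subseteq O$, hence $O\in B_x$; thus $U\subseteq B_x$, and since both are proper $\subset$-ultrafilters and $U$ is maximal, $U=B_x$.

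\emph{Homeomorphism.} It remains to match the topologies. By definition $\varphi^{-1}(O_N)=\{x:N\in B_x\}=\{x:x\in N\}=N$, so $\varphi$ pulls each basic open $O_N$ back to the open set $N$; since $(O_N)_{N\in B}$ is a basis for $\hat B$ (as noted just after the definition of the Stone space) and $(N)_{N\in B}$ is a basis for $X$, this shows $\varphi$ is simultaneously continuous and open, hence a homeomorphism once combined with bijectivity.

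\emph{Main obstacle.} The routine parts are the topological bookkeeping; the one genuinely delicate point is surjectivity, specifically the passage from ``$x\in\overline O$ for all $O\in U$'' to the cleaner ``$x\in O$ for all $O\in U$'' needed to identify $U$ with $B_x$. The resolution is to exploit $\subset$-coinitiality of the ultrafilter (\autoref{preceqfilters}) to interpose $N\subset O$, so that $x\in\overline N\subseteq O$; I expect the author's proof to hinge on this same interpolation step.
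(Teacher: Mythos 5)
Your overall architecture coincides with the paper's: showing each $B_x$ is a proper $\subset$-filter, getting surjectivity from the finite intersection property of $\{\overline{O}:O\in U\}$ together with the interposition $N\subset O$ (which gives $\bigcap_{O\in U}O=\bigcap_{O\in U}\overline{O}$ -- you correctly identified this as the crux, and it is exactly the paper's step), injectivity from Hausdorffness, and the homeomorphism from $\varphi^{-1}(O_N)=N$ plus the fact that $(O_N)_{N\in B}$ is a basis of $\hat B$.

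There is, however, one genuine gap: your appeal to \autoref{ultrachars}\eqref{compy} to prove that $B_x$ is an ultrafilter. That proposition is stated and proved under the standing hypothesis of its section, namely that $B$ is a basic lattice; in \autoref{xtoBx} the basis $B$ is \emph{not} assumed closed under $\cap$ or $\cup$ (the paper remarks immediately after the proof that ``$B$ above is not even required to be a sublattice of open sets''), so $(B,\subseteq)$ need not even be a meet semilattice and the cited equivalences are not available -- their proofs use meets, \eqref{RieszInterpolation}, \eqref{Multiplicativity} and \eqref{Decomposition} throughout. What saves your argument is that the single implication you need (a proper $\prec$-filter satisfying the condition in \eqref{compy} is maximal) survives without lattice structure, but you must prove it inline: if a $\subset$-filter $V$ properly contains $B_x$, take $y\in V\setminus B_x$, use $\subset$-coinitiality of $V$ to get $O\in V$ with $O\subset y$, use your condition to get $W\in B_x\subseteq V$ with $W\perp O$, and then $\subset$-directedness of $V$ yields $Z\in V$ with $Z\subset W,O$, forcing $Z=\emptyset$ and hence $V=B$ by \eqref{Minimum} and $\supset$-closure. (The paper's own wording of this step, ``$0=w\wedge x\in V$'', also presupposes meets, which is presumably why the paper avoids \autoref{ultrachars} here entirely.) Alternatively, and more economically, maximality of $B_x$ follows from the same compactness argument you already use for surjectivity, which is the paper's route: a proper filter $U$ properly containing $B_x$ would satisfy $\bigcap U\subseteq\{x\}\cap O=\emptyset$ for any $O\in U\setminus B_x$, while $\bigcap U=\bigcap_{O\in U}\overline{O}$ is a nonempty intersection of compact sets with the finite intersection property, a contradiction.
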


\begin{proof}
As $X$ is locally compact and $B$ is a basis, every $B_x$ is a $\subset$-filter.  Also $B_x\neq B$, as $\emptyset\in B\setminus B_x$.  For any $U\in\hat{B}$ and $O\in U$ we have $N\in U$ with $\overline{N}\subseteq O$ so
\[\bigcap_{O\in U}O=\bigcap_{O\in U}\overline{O}.\]
As $U$ is a $\subset$-filter, the latter collection of compact sets has the finite interesection property.  Thus we have some $x\in\bigcap U$, i.e. $U\subseteq B_x$ so, by maximality, $U=B_x$.  Thus the range of $x\mapsto B_x$ contains $\hat{B}$.

As $X$ is Hausdorff, $\bigcap B_x=\{x\}$ so $x\mapsto B_x$ is injective.  This also means that if we had $x\in X$ and a filter $U\subseteq B$ properly containing $B_x$ then we would have $\bigcap U=\emptyset$, despite the fact $\bigcap U$ can be represented as an intersection of compact sets with the finite intersection property as above, a contradiction.  So we do indeed have $B_x\in\hat{B}$, for all $x\in X$.

For any $O\in B$, we have $x\in O\ \Leftrightarrow\ O\in B_x$ so the image of any basic open set $O\in B$ is $\{U\in\hat{B}:O\in U\}$ and vice versa.  Thus $x\mapsto B_x$ is a homeomorphism.
\end{proof}

Note that $B$ above is not even required to be a sublattice of open sets.  Thus it is not clear that $\subset$ (equivalently the topology of $X$) can be recovered from $\subseteq$.

\begin{qst}
Could the topology of $X$ above be recovered from $(B,\subseteq)$ instead?
\end{qst}

More interesting is the fact that we have the following converse of \autoref{BasicBasis}.  Thus we have a duality between basic lattices and locally compact Hausdorff spaces.

\begin{thm}\label{Duality}
For any basic lattice $B$, $\hat{B}$ is locally compact Hausdorff and
\begin{align}
\label{capwedge}&O_x\cap O_y=O_{x\wedge y}.\\
\label{cupvee}&O_x\cup O_y=O_{x\vee y}.\\
\label{perpperp}&O_x\perp O_y\ \Leftrightarrow\ x\perp y.\\
\label{subprec}&O_x\subset O_y\ \Leftrightarrow\ x\prec y.\\
\label{Oxclosure}&\overline{O_x}=\bigcap_{x\prec y}O_y.
\end{align}
\end{thm}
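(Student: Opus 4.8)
The plan is to establish the five identities and the topological properties by systematically exploiting the correspondence between $\prec$-ultrafilters and the filters $B_x$ arising from points, together with the structural axioms of a basic lattice. The natural strategy is to prove the algebraic identities \eqref{capwedge}, \eqref{cupvee}, \eqref{perpperp} first, since these follow almost directly from the filter axioms, and then deduce local compactness and Hausdorffness, which in turn support \eqref{subprec} and \eqref{Oxclosure}.

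\textbf{Algebraic identities.} For \eqref{capwedge}, the inclusion $O_{x\wedge y}\subseteq O_x\cap O_y$ is immediate since $x\wedge y\preceq x,y$ and ultrafilters are $\succeq$-closed (by \autoref{preceqfilters}, a $\prec$-filter is $\succeq$-closed). Conversely, if $U\in O_x\cap O_y$ then $x,y\in U$ and $\prec$-directedness gives $z\in U$ with $z\prec x,y$, whence $z\preceq x\wedge y$ and $x\wedge y\in U$. For \eqref{cupvee}, the inclusion $\supseteq$ on each side is again immediate from $\succeq$-closure; the content is $O_{x\vee y}\subseteq O_x\cup O_y$, i.e. if $x\vee y\in U$ but $x,y\notin U$ we reach a contradiction — this is precisely the primeness of $B\setminus U$, which is the equivalence \eqref{ultra}$\Leftrightarrow$\eqref{prime} of \autoref{ultrachars}. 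For \eqref{perpperp}, unfolding $O_x\perp O_y$ as $O_x\cap O_y=\emptyset$ and using \eqref{capwedge} reduces the claim to showing $O_{x\wedge y}=\emptyset\Leftrightarrow x\wedge y=0$; the forward direction uses that every nonzero element lies in some ultrafilter, which follows from \eqref{Coinitiality} and a Zorn's lemma argument extending the principal-type filter $\{z:z\succeq x\wedge y\}^{\prec}$ to an ultrafilter.

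\textbf{Topology, and the remaining identities.} Hausdorffness: given distinct ultrafilters $U\neq V$, pick (say) $x\in U\setminus V$; by \eqref{compy} there is $w\in V$ and $x'\prec x$ with $w\perp x'$, and refining via $\prec$-coinitiality separates them into disjoint basic sets $O_{x'}$ and $O_w$. Local compactness will follow once I show $\overline{O_x}$ is compact for each $x$, via \eqref{Cofinality} (to get $x\prec y$) and a finite-intersection-property argument on ultrafilters mirroring the proof of \autoref{xtoBx}. The hard part, and the technical heart, will be \eqref{Oxclosure}: proving $\overline{O_x}=\bigcap_{x\prec y}O_y$. The inclusion $\subseteq$ should come from showing each $O_y$ with $x\prec y$ is a neighborhood that $\overline{O_x}$ cannot escape, using \eqref{Complementation} to produce a disjoint witness $w\perp x$ with $w\vee y$ large, so that $O_w$ is an open set meeting the complement of $O_y$ but missing $O_x$. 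The reverse inclusion $\supseteq$ requires showing that any $U$ belonging to every $O_y$ (for $x\prec y$) is a limit of ultrafilters containing $x$; this is where \eqref{RatherBelow} and \eqref{Interpolation} do the real work, letting me interpolate $x\prec z\prec y$ and build an ultrafilter through $x$ arbitrarily close to $U$.

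\textbf{Main obstacle.} I expect \eqref{subprec}, specifically the direction $O_x\subset O_y\Rightarrow x\prec y$, to be the subtlest point, since $\subset$ here means $\overline{O_x}\subseteq O_y$ in $\hat B$, and translating compact containment in the Stone space back into the abstract relation $\prec$ requires \eqref{Oxclosure} to already be in hand. For this reason I would prove \eqref{Oxclosure} \emph{before} \eqref{subprec}, even though they are listed in the opposite order, and then read off $O_x\subset O_y\Leftrightarrow\overline{O_x}\subseteq O_y\Leftrightarrow O_y\supseteq\bigcap_{x\prec y'}O_{y'}$, using a compactness/interpolation argument to collapse the intersection to a single index. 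The converse $x\prec y\Rightarrow O_x\subset O_y$ is the easier half and follows by combining \eqref{Oxclosure} with \eqref{Interpolation}. Throughout, the recurring tool is the extension of directed families to $\prec$-ultrafilters by Zorn's lemma, whose properness I must check against \eqref{compy} each time.
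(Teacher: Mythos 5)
Your outline gets the easy parts right (\eqref{capwedge}, \eqref{cupvee}, and Hausdorffness via \autoref{ultrachars} \eqref{compy}), but the engine that drives the hard parts is missing, and the substitutes you propose do not work. The paper proves \eqref{subprec}$(\Rightarrow)$, compactness of each $\overline{O_x}$, and \eqref{Oxclosure}$(\supseteq)$ all at once from a single explicit construction: given $z$ and $C\subseteq B$ with $z\not\prec\bigvee F$ for every finite $F\subseteq C$, the set $D=\{x\in B:z\prec x\vee\bigvee F\text{ for some finite }F\subseteq C\}$ is a proper $\prec$-filter, and any ultrafilter extending $D$ lies in $\overline{O_z}$ but outside $\bigcup_{y\in C}O_y$. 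Nothing in your plan plays this role. In particular, your route to local compactness \textendash\, ``a finite-intersection-property argument mirroring \autoref{xtoBx}'' \textendash\, is circular: in \autoref{xtoBx} the FIP argument is legitimate because the sets involved are compact subsets of an ambient space $X$, whereas here compactness of $\overline{O_x}$ is exactly what is to be proven. Abstractly you must exhibit, for a basic open cover of $\overline{O_x}$ with no finite subcover, an ultrafilter in $\overline{O_x}$ missed by the cover, and that is precisely the $D$-construction.

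The same gap resurfaces in your two remaining steps, and the give-away is that \eqref{Decomposition} \textendash\, the characteristic axiom separating open-set lattices from regular-open-set lattices \textendash\, never appears in your plan. For \eqref{Oxclosure}$(\supseteq)$ you must show that $U\supseteq\{y:x\prec y\}$ forces $u\C x$ for every $u\in U$ (so that $O_{u\wedge x}\neq\emptyset$ produces ultrafilters in $O_u\cap O_x$). Following your tools: take $u'\in U$ with $u'\prec u$; then \eqref{RatherBelow} (applied to $u'\prec u$ with $z=x$) gives $w\perp u'$ with $x\preceq w\vee u$, whence $x\preceq w$ by \eqref{Distributivity}. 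But $x\preceq w$ does not put $w$ in $U$: you need the strict relation $x\prec w$, and \eqref{Interpolation} cannot supply it; only \eqref{<Below} together with \eqref{Decomposition} can (write $x=w'\vee u''$ with $w'\prec w$, $u''\prec u$; then $u''\preceq x\wedge u=0$, so $x=w'\prec w$, hence $w\in U$, contradicting $w\perp u'\in U$). Likewise for \eqref{subprec}$(\Rightarrow)$: collapsing $\bigcap_{x\prec y'}O_{y'}\subseteq O_y$ to a single index via \eqref{Multiplicativity} and compactness does yield $y^*\succ x$ with $\overline{O_{y^*}}\subseteq O_y$, but you must then translate this containment of subsets of $\hat B$ back into the abstract relation $x\prec y$, and such order-reflection again requires constructing ultrafilters that separate elements \textendash\, i.e.\ the missing claim \textendash\, so your plan bottoms out where it started. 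Finally, a small but real slip: $\{z:z\succeq x\wedge y\}^\prec=\{z:x\wedge y\prec z\}$, and an ultrafilter extending this need not contain $x\wedge y$ (take $B=$ the relatively compact open subsets of $\mathbb{R}$ and $x\wedge y=(0,1)$: the ultrafilter of the point $0$ contains every member of $B$ containing $[0,1]$ but not $(0,1)$); you must instead use \eqref{Coinitiality} to pick $0\neq v\prec x\wedge y$ and extend $\{z:v\prec z\}$, which does contain $x\wedge y$.
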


\begin{proof}\
\begin{itemize}
\item[\eqref{capwedge}]  Every $\prec$-filter is a $\preceq$-filter, by \autoref{preceqfilters}.

\item[\eqref{cupvee}]  Again, this follows from \autoref{preceqfilters} and \autoref{ultrachars} \eqref{prime}.

\item[\eqref{perpperp}]  This will follow from \eqref{capwedge}, once we show that $O_x=\emptyset\ \Leftrightarrow\ x=0$.  For this, note that if $x\neq 0$ then we have $y\neq0$ with $y\prec x$, by \eqref{Coinitiality}.  Then $\{z\in B:y\prec z\}$ is a $\prec$-filter, by \eqref{RieszInterpolation}, which extends to a $\prec$-ultrafilter $U\in O_x$.
\end{itemize}

Now say we have $C\subseteq B$ and $z\in B$ with $z\not\prec\bigvee F$, for all finite $F\subseteq C$, and let
\[D=\{x\in B:F\subseteq B\text{ is finite and }z\prec x\vee\bigvee F\}.\]
We claim that then
\[\emptyset\neq\bigcap_{x\in D}O_x\subseteq\overline{O_z}\qquad\text{and}\qquad\bigcap_{x\in D}O_x\cap\bigcup_{y\in C}O_y=\emptyset.\]
If $z\prec x\vee\bigvee F,y\vee G$ then, by \eqref{Multiplicativity}, \eqref{Distributivity} and \eqref{LeftAuxiliarity}, $z\prec(x\wedge y)\vee\bigvee(F\cup G)$, so $D$ is a $\preceq$-filter.  By \eqref{vInterpolation}, $D$ is $\prec$-coinitial and hence a $\prec$-filter.  As $z\not\prec\bigvee F$, for all finite $F\subseteq C$, $0\notin D$ so $D$ has some extension $U\in\bigcap_{x\in D}O_x$.  Thus $\emptyset\neq\bigcap_{x\in D}O_x$.

Assume that $U\notin\overline{O_z}$ so we have $y\in U$ with $z\perp y$.  Take $x\in U$ with $x\prec y$.  By \eqref{<Below}, we have $w\perp x$ with $z\prec w\vee y$.  By \eqref{Decomposition}, we have $w'\prec w$ and $y'\prec y$ with $z=w'\vee y'$.  But then $y'=y'\wedge z\preceq y\wedge z=0$ so $z=w'\prec w$ and hence $w\in D$ (taking $F=\emptyset$).  But $w\perp x\in U$ means $w\notin U$, i.e. $U\notin O_w\supseteq\bigcap_{x\in D}O_x$, a contradiction.  Thus $\bigcap_{x\in D}O_x\subseteq\overline{O_z}$.

By \eqref{<Below}, if $x\prec y\in C$ then we have $w\in D\subseteq U$ with $x\perp w$.  Thus $y\notin U$, by \autoref{ultrachars} \eqref{prime}, i.e. $\bigcap_{x\in D}O_x\cap\bigcup_{y\in C}O_y=\emptyset$, proving the claim.

\begin{itemize}
\item[\eqref{subprec}]  The claim with $C=\{y\}$ yields the $\Rightarrow$ part.  Conversely, if $x\prec y$ and $U\in\overline{O_x}$ then $x\Cap z$, for all $z\in U$, and hence $y\in U$, by \autoref{ultrachars} \eqref{compy}.
\item[\eqref{Oxclosure}]  The claim with $C=\emptyset$ yields $\supseteq$ while \eqref{subprec} yields $\subseteq$.
\end{itemize}

By \eqref{subprec}, the claim is saying that $\overline{O_x}$ is compact, for all $x\in B$, and hence $\hat{B}$ is locally compact.  To see that $\hat{B}$ is Hausdorff, take $U,V\in\hat{B}$.  If we had $V\subseteq U$ then we would have $V=U$, by maximality.  So if $U$ and $V$ are distinct then we have $y\in V\setminus U$.  Take $x\in V$ with $x\prec y$.  By \autoref{ultrachars} \eqref{compy}, we have $w\in U$ with $w\perp x$ and hence $U\in O_w\perp O_x\ni V$, by \eqref{perpperp}.
\end{proof}

\begin{cor}\label{basiclat}
Basic lattices characterize $\cap$-closed $\cup$-closed relatively compact bases of (necessarily locally compact) Hausdorff spaces.  More precisely: If $B$ is a $\cap$-closed $\cup$-closed relatively compact basis of Hausdorff $X$ then $(B,\subset)$ is a basic lattice and $\hat{B}$ is homeomorphic to $X$, while if $(B,\prec)$ is a basic lattice then $(O_x)_{x\in B}$ is a $\cap$-closed $\cup$-closed relatively compact basis of $\hat{B}$ isomorphic to $B$.
\end{cor}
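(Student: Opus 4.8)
The plan is to assemble this corollary from the three results already in hand, leaving only one small point to argue directly. The statement splits into two halves. For the first half, I would take $B$ to be a $\cap$-closed $\cup$-closed relatively compact basis of Hausdorff $X$ and observe first that possessing a relatively compact basis already forces $X$ to be locally compact (each point lies in some basic $N$ with $\overline{N}$ compact), which justifies the parenthetical remark. Then \autoref{BasicBasis} immediately gives that $(B,\subset)$ is a basic lattice, and \autoref{xtoBx} gives that $x\mapsto B_x$ is a homeomorphism of $X$ onto $\hat B$. So this half reduces to a pair of citations.

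For the second half, let $(B,\prec)$ be a basic lattice. Here \autoref{Duality} does almost all the work: it yields that $\hat B$ is locally compact Hausdorff and supplies \eqref{capwedge}--\eqref{Oxclosure}. I would then verify the four features of a $\cap$-closed $\cup$-closed relatively compact basis for the family $(O_x)_{x\in B}$: it is a basis by the observation recorded just after the definition of $\hat B$; it is $\cap$-closed by \eqref{capwedge} and $\cup$-closed by \eqref{cupvee}; and each $O_x$ is relatively compact because $\overline{O_x}$ is compact, which is exactly what the claim inside the proof of \autoref{Duality} established (together with \eqref{subprec}). Applying \autoref{BasicBasis} to this basis then shows that $((O_x)_{x\in B},\subset)$ is itself a basic lattice.

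The only point needing a genuine, if short, argument is that $x\mapsto O_x$ is an \emph{isomorphism} of basic lattices, and this is the step I expect to be the crux. By \eqref{subprec} the map already preserves and reflects $\prec$, since $x\prec y\Leftrightarrow O_x\subset O_y$; what remains is injectivity. For this I would identify ordinary inclusion with reflexivization: since set-inclusion $\subseteq$ is precisely the reflexivization of compact containment $\subset$ on any relatively compact basis, combining \eqref{subprec} with the defining equivalence \eqref{Reflexivization} yields $O_x\subseteq O_y\Leftrightarrow\forall z\prec x\,(z\prec y)\Leftrightarrow x\preceq y$. Consequently $O_x=O_y$ forces $x\preceq y\preceq x$, hence $x=y$ by antisymmetry of $\preceq$. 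Thus $x\mapsto O_x$ is a bijection onto $(O_x)_{x\in B}$ transporting $\prec$ to $\subset$, i.e. a basic lattice isomorphism.

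If one preferred to avoid invoking the reflexivization identity, the same injectivity can be extracted directly from the claim proved inside \autoref{Duality}: given $x\not\preceq y$, pick $z\prec x$ with $z\not\prec y$ (necessarily $z\neq 0$, since \eqref{Minimum} would otherwise give $z\prec y$), and apply that claim with $C=\{y\}$; its hypothesis holds because $z\not\prec 0$ and $z\not\prec y$, and it produces $U\in\overline{O_z}\setminus O_y$, whence $U\in O_x\setminus O_y$ by \eqref{subprec}. Either way the isomorphism follows, and the corollary is complete. I do not anticipate any real obstacle beyond bookkeeping; the substance all lives in \autoref{BasicBasis}, \autoref{xtoBx} and \autoref{Duality}.
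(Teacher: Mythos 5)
Your proposal is correct and takes essentially the same approach as the paper: the corollary is stated there without a separate proof precisely because it is the assembly of \autoref{BasicBasis}, \autoref{xtoBx} and \autoref{Duality} that you describe, with relative compactness of each $O_x$ read off from the claim in the proof of \autoref{Duality} via \eqref{subprec}. Your injectivity argument (via \eqref{subprec} together with \eqref{Reflexivization}, or alternatively via the claim with $C=\{y\}$) correctly fills in the one detail the paper leaves implicit.
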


Thus we could have equivalently defined basic lattices as those isomorphic to relatively compact open bases closed under $\cap$ and $\cup$ in locally compact Hausdorff spaces.  Likewise, let us call $B$ a \emph{basic join semilattice} if it isomorphic to a relatively compact open basis closed under $\cup$ in a locally compact Hausdorff space.

\begin{qst}
Is there a finite axiomatization of basic join semilattices?\footnote{By \cite[Theorem 2]{Wolk1956}, the basic join semilattices with a maximum (corresponding to the compact case) are precisely those join semilattices satisfying the dual of separtivity in which every maximal Frink ideal is prime.  However, the mention of subsets here, namely ideals, makes this characterization second order rather than first order.}
\end{qst}

Simply replacing \eqref{Multiplicativity} in \autoref{BasicLatticeDfn} with \eqref{RightAuxiliarity} and \eqref{RieszInterpolation}, as in \autoref{nomeets}, does not work, as \eqref{Decomposition} is too strong.  For example, consider the basis $B$ of $X=\mathbb{N}\cup\{\infty\}$, the one-point compactification of $\mathbb{N}$, consisting of all neighbourhoods of $\infty$, finite subsets of $\mathbb{N}$ and $\mathbb{N}$ itself.  Then $\mathbb{N}\subset\{1\}\cup(X\setminus\{1\})$ but the only we way we could have $O\subset\{1\}$ and $N\subset X\setminus\{1\}$ with $\mathbb{N}=O\cup N$ is if $N=\mathbb{N}\setminus\{1\}$, even though $\mathbb{N}\setminus\{1\}\notin B$.

We could further replace \eqref{Decomposition} with \eqref{vInterpolation}, but this is too weak.  Indeed, note that every finite basic join semilattice is isomorphic to the lattice of all subsets $\mathcal{P}(X)$ of some finite $X$ (where $\prec$ coincides with $\preceq$), and there are plenty of other finite lattices satisfying these axioms, e.g. the diamond lattice $D_3$ with minimum $0$, maximum $1$ and three incomparable elements in between.  In fact, when $\prec$ is reflexive and hence coincides with $\preceq$, we can always just take $x'=x$ and $y'=y$ in \eqref{vInterpolation}.

On the other hand, we know there is no finite axiomatization of \emph{basic meet semilattices} (defined like basic join semilattices but with $\cap$ replacing $\cup$).  In fact, basic meet semilattices do not even form an elementary class.  To see this note that, for every $n\in\mathbb{N}$, $D_n$ is a basic meet semilattice (representing a basis of the discrete space with $n$ points), but an ultraproduct of $(D_n)$ is not.  Indeed, such an ultraproduct is isomorphic to $D_\kappa$ for some (uncountable) infinite $\kappa$.  If such a lattice represented a basis of relatively open sets closed under $\cap$ in locally compact Hausdorff $X$, each element of $D_\kappa\setminus\{0,1\}$ would represent an isolated point, while $1$ would represent $X$ which, being relatively compact by definition, must actually be compact.  As infinite collections of isolated points are not compact we must have some other point $x\in X$.  As $X$ is Hausdorff and $B$ is a basis, there must be some proper open set containing $x$ represented in $D_\kappa$, a contradiction.  We will return to this problem in \autoref{BS}.

Lastly, let us note that $\subset$ is reflexive precisely when the basis elements are not just open but also compact.  Thus we get a duality between $0$-dimensional locally compact Hausdorff spaces and generalized Boolean algebras.  And a $\cup$-closed basis has a maximum precisely when $X$ is compact, so in this case we recover the classical Stone duality between $0$-dimensional compact Hausdorff spaces and Boolean algebras.

\section{Interpolators}\label{I}

If we really hope to do topology in a first order way, we also need a first order analog of continuous maps.  For this, we introduce interpolators.

\begin{dfn}
For basic lattices $(B,\prec)$ and $(C,<)$, call $\sqsubset\ \subseteq B\times C$ an \emph{interpolator} if it satisfies \eqref{Minimum} and all the axioms from \eqref{Cofinality} to \eqref{Decomposition}.
\end{dfn}

Actually, \eqref{Interpolation} for $\sqsubset$ above really becomes two axioms
\begin{align}
\label{<Interpolation}\tag{$<$-Interpolation}x\sqsubset y\qquad&\Rightarrow\qquad\exists z\in C\ (x\sqsubset z<y).\\
\label{precInterpolation}\tag{$\prec$-Interpolation}x\sqsubset y\qquad&\Rightarrow\qquad\exists z\in B\ (x\prec z\sqsubset y).
\end{align}

\begin{prp}
If $\sqsubset$ is an interpolator then we also have
\begin{align}
\label{leqAuxiliarity}\tag{$\leq$-Auxiliarity}x\sqsubset z\leq y\qquad&\Rightarrow\qquad x\prec y.\\
\label{preceqAuxiliarity}\tag{$\preceq$-Auxiliarity}x\preceq z\sqsubset y\qquad&\Rightarrow\qquad x\prec y.
\end{align}
\end{prp}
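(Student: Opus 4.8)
The plan is to establish the two laws in the order \eqref{leqAuxiliarity} then \eqref{preceqAuxiliarity}, treating $\sqsubset$ throughout as the cross-lattice stand-in for $\prec$. Since an interpolator obeys the same schema of axioms, namely \eqref{Minimum} together with those from \eqref{Cofinality} to \eqref{Decomposition}, that $\prec$ obeys inside a basic lattice, these two statements are nothing but the interpolator versions of \eqref{LeftAuxiliarity} and \eqref{RightAuxiliarity}, with conclusion $x\sqsubset y$ (the only relation linking $x\in B$ to $y\in C$). The proofs will therefore run parallel to those, with $0$, $\vee$, $\wedge$ and \eqref{Cofinality} supplying the reflexivity that $\sqsubset$ itself lacks.

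For \eqref{leqAuxiliarity} I would start from \eqref{Minimum} for $\sqsubset$, which gives $0\sqsubset y$ with $0$ the minimum of $B$ and $y\in C$. Pairing this with the hypothesis $x\sqsubset z$ and applying \eqref{Additivity} yields $x\vee0\sqsubset z\vee y$. Since $0$ is the bottom of $(B,\preceq)$ we have $x\vee0=x$, and since $z\leq y$ in $C$ we have $z\vee y=y$; hence $x\sqsubset y$. A single use of \eqref{Additivity} suffices, with no appeal to interpolation, exactly as \eqref{LeftAuxiliarity} for $\prec$ dropped straight out of the definition of $\preceq$.

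The step I expect to be the real obstacle is \eqref{preceqAuxiliarity}, because \eqref{Multiplicativity} consumes two $\sqsubset$-facts whereas the hypothesis offers only $z\sqsubset y$, and there is in general no top of $C$ to pair against $x$. My fix is to invoke \eqref{Cofinality} for $\sqsubset$, producing some $w\in C$ with $x\sqsubset w$; then \eqref{Multiplicativity} gives $x\wedge z\sqsubset w\wedge y$, and $x\preceq z$ forces $x\wedge z=x$, so $x\sqsubset w\wedge y$. As $w\wedge y\leq y$ in $C$, I close by the already-proved \eqref{leqAuxiliarity}. This is precisely the \eqref{RightAuxiliarity} argument of \autoref{nomeets}, with \eqref{Cofinality} playing the role of the missing maximum; the only sequencing constraint is that \eqref{leqAuxiliarity} be in hand first, and since that law was proved outright from \eqref{Minimum} and \eqref{Additivity} alone, no circularity arises.

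Finally, to connect $\sqsubset$ back to $\prec$ and to $<$, I would note that \eqref{Domination} together with \eqref{preceqAuxiliarity} upgrades a left $\prec$-step into absorption, $x\prec z\sqsubset y\Rightarrow x\sqsubset y$, and dually \eqref{Domination} applied in $C$ with \eqref{leqAuxiliarity} gives $x\sqsubset z<y\Rightarrow x\sqsubset y$. Read alongside $\prec$-Interpolation and $<$-Interpolation, these say that a $\prec$- or $<$-step may be freely inserted or erased on either side of $\sqsubset$ without altering the relation, which is exactly the coherence between $\sqsubset$ and the intrinsic relations $\prec$ and $<$ that the interpolator axioms were designed to guarantee.
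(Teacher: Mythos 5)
Your proposal is correct and takes essentially the same route as the paper: \eqref{Minimum} plus \eqref{Additivity} to get \eqref{leqAuxiliarity}, then \eqref{Cofinality} plus \eqref{Multiplicativity} reduced to the already-proved \eqref{leqAuxiliarity} to get \eqref{preceqAuxiliarity}. You also correctly read the stated conclusion $x\prec y$ as $x\sqsubset y$ (the only relation pairing $x\in B$ with $y\in C$), which is exactly what the paper's own proof establishes.
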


\begin{proof}
If $x\sqsubset z\leq y$ then $0\sqsubset y$, by \eqref{Minimum}, so $x=0\vee x\sqsubset y\vee z=y$, by \eqref{Additivity}.  Thus if $x\preceq z\sqsubset y$ then we have $w\sqsupset x$, by \eqref{Cofinality}, so $x=x\wedge z\sqsubset w\wedge y\preceq y$, by \eqref{Multiplicativity}, and hence $x\sqsubset y$, by \eqref{leqAuxiliarity}.
\end{proof}

We define composition of relations in the usual way, namely by
\[x\mathbin{\sqsubset}\circ\mathbin{\sqsubset'}y\qquad\Leftrightarrow\qquad\exists z\ (x\sqsubset z\sqsubset'y).\]
It is routine to verify that a composition of interpolators is again an interpolator.  It is also immediate from the definitions that if $(B,\prec)$ is a basic lattice then $\prec$ is an interpolator from $B$ to itself with $\prec\ =\mathbin{\prec}\circ\mathbin{\prec}$, as $\prec$ is transitive and satisfies \eqref{Interpolation}.  So taking interpolators as morphisms turns the class of basic lattices into a category, which we denote by $\mathbf{BasLat}$.  We also let $\mathbf{LocHaus}$ denote the category of locally compact Hausdorff topological spaces with continuous maps as morphisms.

\begin{thm}\label{categoryequiv}
$\mathbf{BasLat}$ and $\mathbf{LocHaus}$ are equivalent categories.
\end{thm}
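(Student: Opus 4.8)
The plan is to exhibit a covariant functor $\Sigma\colon\mathbf{BasLat}\to\mathbf{LocHaus}$ and prove it is fully faithful and essentially surjective. On objects set $\Sigma(B)=\hat B$, which is locally compact Hausdorff by \autoref{Duality}. Essential surjectivity is then immediate: any locally compact Hausdorff $X$ has a relatively compact basis, whose closure under $\cap$ and $\cup$ is again such a basis $B$ (finite unions and intersections of relatively compact sets stay relatively compact), and $\hat B$ is homeomorphic to $X$ by \autoref{basiclat} (equivalently \autoref{xtoBx}). So the real work lies in the morphisms.

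On an interpolator $\sqsubset\,\subseteq B\times C$ I would set $\Sigma(\sqsubset)=\hat\sqsubset$ where, writing $U^\sqsubset=\{y\in C:\exists x\in U\ (x\sqsubset y)\}$, the map is $\hat\sqsubset(U)=U^\sqsubset$. First I would check this lands in $\hat C$. That $U^\sqsubset$ is a $<$-filter follows as in \autoref{preceqfilters}: it is $\geq$-closed and $<$-coinitial by \eqref{leqAuxiliarity} and \eqref{<Interpolation}, and $\leq$-directed because a common $w\in U$ with $w\preceq x,x'$ (filteredness of $U$ and \eqref{Domination}) satisfies $w\sqsubset y,y'$ by \eqref{preceqAuxiliarity} and hence $w\sqsubset y\wedge y'$ by \eqref{Multiplicativity}. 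Primeness of $U^\sqsubset$ (equivalently that $C\setminus U^\sqsubset$ is a $\preceq$-ideal, cf. \autoref{ultrachars}) comes from \eqref{Decomposition} together with the primeness of $U$. Continuity is then easy, since $\hat\sqsubset^{-1}(O_y)=\bigcup_{x\sqsubset y}O_x$ is open; and functoriality is a direct computation with relation composition, the identity interpolator $\prec$ inducing $U\mapsto U^\prec=U$.

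For full faithfulness I would show $\sqsubset\mapsto\hat\sqsubset$ is a bijection from interpolators $B\to C$ onto $\mathbf{LocHaus}(\hat B,\hat C)$ by producing an inverse. Given continuous $f\colon\hat B\to\hat C$, define $x\sqsubset_f y\iff f(\overline{O_x})\subseteq O_y$. Using \eqref{Oxclosure}, \eqref{subprec} and the compactness of each $\overline{O_x}$, one checks $\sqsubset_f$ is an interpolator by exactly the finite-subcover arguments of \autoref{BasicBasis}, now applied to covers of $f(\overline{O_x})$ pulled back through $f$. The two round trips are then: $\hat{\sqsubset_f}=f$, because for a point $U$ one has $y\in U^{\sqsubset_f}$ iff some $O_x\ni U$ satisfies $f(\overline{O_x})\subseteq O_y$, which by local compactness of $\hat B$ holds exactly when $f(U)\in O_y$; and $\sqsubset_{\hat\sqsubset}=\sqsubset$, where $x\sqsubset y\Rightarrow x\sqsubset_{\hat\sqsubset}y$ uses \eqref{precInterpolation} and \eqref{subprec}, while the converse covers the compact $\overline{O_x}$ by $\{O_w:w\sqsubset y\}$, takes a finite join $w$ (so $w\sqsubset y$ by \eqref{Additivity} and $x\prec w$ by \eqref{subprec}), and concludes $x\sqsubset y$ by \eqref{preceqAuxiliarity}.

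I expect the main obstacle to be the well-definedness step: showing $\hat\sqsubset$ is \emph{total}, i.e. that $U^\sqsubset$ is proper ($U^\sqsubset\neq C$) so that it is a genuine $<$-ultrafilter and $\hat\sqsubset(U)$ an honest point of $\hat C$. Geometrically this says an interpolator must not send a point of $\hat B$ off to ``infinity,'' and it amounts to ruling out $x\sqsubset 0$ for $x\neq0$; it is precisely here that the exact list of interpolator axioms must be pinned down, with coinitiality-type content, rather than \eqref{Multiplicativity} or \eqref{Decomposition}, doing the real work. The faithfulness round trip $\sqsubset_{\hat\sqsubset}=\sqsubset$ is the other delicate point, since its nontrivial direction relies on the compactness of $\overline{O_x}$ to replace the defining ``for all ultrafilters'' condition by a single finite join.
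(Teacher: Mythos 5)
Your proposal follows the paper's own route almost step for step: the paper also sends an interpolator $\sqsubset$ to the map $f(U)=U^\sqsubset$, proves well-definedness and continuity with exactly the citations you give (\eqref{leqAuxiliarity}, \eqref{Multiplicativity}, \eqref{<Interpolation}, and \eqref{Decomposition} via \autoref{ultrachars}\eqref{prime}), establishes the characterization $x\sqsubset y\Leftrightarrow f[\overline{O_x}]\subseteq O_y$, and recovers an interpolator from a continuous map by the same formula $O\sqsubset N\Leftrightarrow f[\overline{O}]\subseteq N$, with the same appeal to ``routine'' verification. The one place you genuinely diverge is the hard direction of that characterization: the paper assumes $x\not\sqsubset y$, builds the filter $D=\{z\in B:\exists w\sqsubset y\ (x\prec w\vee z)\}$ and extends it to an ultrafilter in $\overline{O_x}\setminus\bigcup_{w\sqsubset y}O_w$, ``arguing as in the proof of \autoref{Duality}''. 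Your contrapositive-free version \textendash\, cover $\overline{O_x}$ by $\{O_w:w\sqsubset y\}$, take a finite subcover, join it using \eqref{cupvee} and \eqref{Additivity}, then conclude via \eqref{subprec}, \eqref{Domination} and \eqref{preceqAuxiliarity} \textendash\, is correct, shorter, and reuses the compactness already paid for in \autoref{Duality} rather than rerunning the ultrafilter-extension machinery.

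The obstacle you flag at the end is real, and you are right that neither \eqref{Multiplicativity} nor \eqref{Decomposition} dispatches it; but be aware that the paper does not resolve it either, and it \emph{cannot} be resolved from the axioms as literally stated. The full relation $\sqsubset\ =B\times C$ satisfies every interpolator axiom: \eqref{Minimum}, \eqref{Cofinality}, \eqref{Multiplicativity} and \eqref{Additivity} trivially, \eqref{<Interpolation} with $z=0$ (note $0<0$ by \eqref{Minimum} in $C$), \eqref{precInterpolation} by \eqref{Cofinality} in $B$, and \eqref{Decomposition} with $x'=y'=z$. For this $\sqsubset$ one has $U^\sqsubset=C$ for every $U$, which is not a proper filter, so $U^\sqsubset\notin\hat{C}$ and the functor is simply undefined; the paper's appeal to \autoref{ultrachars}\eqref{prime} silently assumes properness, since that proposition is stated only for filters $U\subsetneqq B$. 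Your guess that ``coinitiality-type content'' does the work is not quite right: no reading of \eqref{Coinitiality} constrains which elements sit below $0_C$, and the full relation satisfies it. What is needed is precisely the zero-reflection condition $x\sqsubset 0\Rightarrow x=0$, which must be added to the definition: with it, $0\in U^\sqsubset$ would force $0\in U$, contradicting properness of $U$. This condition holds for every interpolator induced by a continuous map (as $f[\overline{O}]\subseteq\emptyset$ forces $O=\emptyset$), holds for $\prec$ itself, and is preserved under composition of relations, so adjoining it leaves the intended category intact and closes the gap in both your argument and the paper's.
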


\begin{proof}
Assume $\sqsubset$ is an interpolator from $(B,\prec)$ to $(C,<)$.  For $U\in\hat{B}$, define
\[U^\sqsubset=\{y\in C:\exists x\in U(x\sqsubset y)\}.\]
We claim that $U^\sqsubset\in\hat{C}$.  By \eqref{leqAuxiliarity}, $U^\sqsubset$ is $\geq$-closed.  By \eqref{Multiplicativity} and \eqref{<Interpolation}, $U^\sqsubset$ is $<$-directed.  And by \eqref{Decomposition}, $U^\sqsubset$ satisfies \autoref{ultrachars} \eqref{prime}, proving the claim.

Let $f(U)=U^<$ so $f:\hat{B}\rightarrow\hat{C}$.  As $f^{-1}[O_y]=\bigcup\limits_{x\sqsubset y}O_x$, $f$ is continuous.  We claim
\[x\sqsubset y\qquad\Leftrightarrow\qquad f[\overline{O_x}]\subseteq O_y.\]
If $x\sqsubset y$ then \eqref{precInterpolation} yields $z\in B$ with $x\prec z\sqsubset y$.  Then \eqref{subprec} yields
\[f[\overline{O_x}]\subseteq f[O_z]\subseteq O_y.\]
Conversely, assume $x\not\sqsubset y$ and let
\[D=\{z\in B:\exists w\sqsubset y\ (x\prec w\vee z)\}.\]
By $\prec$-Auxiliarity, if $0\in D$ then $x\sqsubset y$, a contradiction, so $0\notin D$.  Again using $\prec$-Auxiliarity and arguing as in the proof of \autoref{Duality}, $D$ can be extended to $U\in\overline{O_x}\setminus\bigcup\limits_{w\sqsubset y}O_w$ so $f(U)\notin O_y$ and hence $f[\overline{O_x}]\not\subseteq O_y$, proving the claim.

On the other hand, if $f:X\rightarrow Y$ is a continuous map between locally compact Hausdorff spaces with relatively compact bases $B$ and $C$ respectively, it is routine to verify that we get an interpolator $\sqsubset$ defined by
\[O\sqsubset N\qquad\Leftrightarrow\qquad f[\overline{O}]\subseteq N.\]
Then we immediately see that $C_{f(x)}=B_x^\sqsubset$, for all $x\in X$.
\end{proof}

\section{Basic Semilattices}\label{BS}

Our next goal is to characterize basic meet semilattices by omitting types.  By a `type' we mean a collection of first order formulas and by `omit' we mean that there are no elements which satisfy the entire type (see \cite[Ch 4]{Marker2002}).  Specifically, consider the types \eqref{phin} and \eqref{psin} where
\begin{align}
\label{phin}\tag{$\phi_n$}\phi_n(x,y)\ \ \Leftrightarrow\ \ x\prec y\text{ and }&\forall w_1,\ldots,w_n\prec y\ \forall v_1\prec w_1\ldots\forall v_n\prec w_n\\
\nonumber&\exists x'\neq0\ (x\succ x'\perp v_1,\ldots,v_n).\\
\label{psin}\tag{$\psi_n$}\psi_n(x,y,z)\ \ \Leftrightarrow\ \ x\prec y\text{ and }&\forall y'\prec y\ \forall w_1,\ldots,w_n\perp x\ \forall v_1\prec w_1\ldots\forall v_n\prec w_n\\
\nonumber&\exists z'\neq0\ (z\succ z'\perp y',v_1,\ldots,v_n).
\end{align}
As we shall soon see, omitting \eqref{phin} corresponds to \eqref{Interpolation}, while omitting \eqref{psin} corresponds to \eqref{<Below}.  Also let \eqref{thetan} be the sentences given by
\[\label{thetan}\tag{$\theta_n$}\theta_n\quad\Leftrightarrow\quad\forall x,y\ (\exists w_1\ldots,w_n\prec y\ \nexists v\neq0\ (x\succ v\perp w_1,\ldots,w_n)\ \Rightarrow\ x\prec y).\]

\begin{dfn}
We call $(B,\prec)$ a \emph{basic semilattice} if $(B,\preceq)$ is a meet semilattice and $(B,\prec)$ satisfies \eqref{Minimum}, \eqref{Transitivity}, \eqref{Coinitiality}, \eqref{Multiplicativity} and \eqref{thetan}, while omitting \eqref{phin} and \eqref{psin}.
\end{dfn}

Let $\mathcal{P}(B)$, $\mathcal{F}(B)$ and $\mathcal{S}(B)$ be the arbitrary, finite and singleton subsets of $B$,
\begin{align*}
\mathcal{P}(B)&=\{C:C\subseteq B\}.\\
\mathcal{F}(B)&=\{C\subseteq B:|C|<\infty\}.\\
\mathcal{S}(B)&=\{\{x\}:x\in B\}.
\end{align*}
Consider the relations defined on $\mathcal{P}(B)$ by
\begin{align*}
C&\prec D\quad&\Leftrightarrow\quad&& C&\subseteq D^\succ&\Leftrightarrow\qquad&\forall x\in C\ \exists y\in D\ (x\prec y).\\
C&\precsim D\quad&\Leftrightarrow\quad&& C^\succ&\subseteq D^\Cap\cup\{0\}&\Leftrightarrow\qquad&\forall x\in C\ \nexists y\perp D\ (x\succ y\neq0).
\end{align*}
By \eqref{Coinitiality}, $\C$ is reflexive except at $0$ so $A^\succ\subseteq A^{\succ\Cap}\cup\{0\}=A^\Cap\cup\{0\}$ and hence, by defintion, $A\precsim A\precsim A^\succ$, i.e. $\precsim$ is reflexive too.  Also define
\begin{equation}\label{CwedgeDdef}
C\wedge D=\{x\wedge y:x\in C\text{ and }y\in D\}.
\end{equation}

\begin{prp}\label{precprops}
On $\mathcal{P}(B)$, $\prec$ and $\precsim$ are transitive, $\bigcup$-additive, $\wedge$-multiplicative and
\[C\preceq D\quad\Rightarrow\quad C\precsim D\quad\Leftrightarrow\quad C\underset{\displaystyle\sim}{\preceq}D.\]
\end{prp}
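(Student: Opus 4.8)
The plan is to verify the six closure properties (transitivity, $\bigcup$-additivity and $\wedge$-multiplicativity, for each of $\prec$ and $\precsim$) one at a time, and then read off the two-part display. For $\prec$ each is a direct unfolding of $C\prec D\Leftrightarrow C\subseteq D^\succ$: a single $x\in C$ comes with a witness $y\in D$, $x\prec y$, which one pushes through the matching axiom on $B$. Thus transitivity of $\prec$ on $\mathcal{P}(B)$ is just \eqref{Transitivity} applied witness-by-witness, $\bigcup$-additivity is immediate since a witness for $x\in\bigcup_iC_i$ already lies in some $C_i$, and $\wedge$-multiplicativity of $\prec$ is \eqref{Multiplicativity} applied to the two witnesses $x\prec y\in D$ and $x'\prec y'\in D'$, giving $x\wedge x'\prec y\wedge y'\in D\wedge D'$. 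I expect these three to take only a couple of lines.

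The substance lies in the three assertions for $\precsim$, which I would all prove by the same \emph{$\Cap$-chase}. The recurring move is: if $z\neq0$ and $z\prec a$ for some $a\in A$, then $z\in A^\succ\setminus\{0\}$, so whenever $A\precsim A'$ we obtain some $a'\in A'$ and a nonzero witness $w\prec z,a'$, and since $w\prec z$, \eqref{Transitivity} keeps $w$ below everything $z$ was below. For transitivity, given $C\precsim D\precsim E$ and a nonzero $y\in C^\succ$, one chase through $C\precsim D$ yields $d\in D$ and a nonzero $z\prec y,d$; as $z\neq0$ and $z\prec d$, a second chase through $D\precsim E$ yields $e\in E$ and a nonzero $w\prec z,e$; then $w\prec y$, so $y\Cap e$ and $y\in E^\Cap$. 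The $\bigcup$-additivity of $\precsim$ needs no chase, since a nonzero element of $(\bigcup_iC_i)^\succ$ already lies in some $C_i^\succ$.

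I expect the $\wedge$-multiplicativity of $\precsim$ to be the one genuinely non-routine step, the subtlety being that the two hypotheses must be fed in \emph{sequence} rather than in parallel. Given $C\precsim D$, $C'\precsim D'$ and a nonzero $y\in(C\wedge C')^\succ$, write $y\prec x\wedge x'$ with $x\in C$, $x'\in C'$. First, $x\wedge x'\preceq x$ and \eqref{LeftAuxiliarity} give $y\prec x$, so $C\precsim D$ produces $d\in D$ with a nonzero witness $z\prec y,d$. Now I reuse this same $z$: from $z\prec y\prec x\wedge x'\preceq x'$ and \eqref{LeftAuxiliarity}, $z\prec x'$, so $C'\precsim D'$ produces $d'\in D'$ with a nonzero witness $w\prec z,d'$. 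Finally $w\prec y$ and $w\prec z\prec d$ give $w\prec d$, while $w\prec d'$, so \eqref{Multiplicativity} applied to $w\prec d$ and $w\prec d'$ (noting $w\wedge w=w$) gives $w\prec d\wedge d'\in D\wedge D'$; as $w\neq0$ and $w\prec y$, this yields $y\Cap(d\wedge d')$, i.e. $y\in(D\wedge D')^\Cap$.

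For the display, I would first unfold $\preceq$ as the reflexivization of the $\mathcal{P}(B)$-relation $\prec$ via \eqref{Reflexivization}, which amounts to $C\preceq D\Leftrightarrow C^\succ\subseteq D^\succ$. Combining this with the inclusion $D^\succ\subseteq D^\Cap\cup\{0\}$ already recorded before the proposition gives $C^\succ\subseteq D^\Cap\cup\{0\}$, which is exactly $C\precsim D$; this proves $C\preceq D\Rightarrow C\precsim D$. For $C\precsim D\Leftrightarrow C\underset{\displaystyle\sim}{\preceq}D$, I would invoke the general fact that a reflexive transitive relation equals its own reflexivization: $\precsim$ is reflexive (noted before the proposition) and transitive (just established), hence a preorder, so $\underset{\displaystyle\sim}{\preceq}$, being its reflexivization, coincides with $\precsim$ — the forward inclusion by transitivity, the reverse by testing the defining condition at the reflexive instance $C\precsim C$.
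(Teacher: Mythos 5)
Your verification of the six closure properties is correct, and for the two non-routine ones ($\precsim$-transitivity and $\precsim$-$\wedge$-multiplicativity) your pure witness chases are, if anything, cleaner than the paper's own proof: the paper's multiplicativity step forms the meet $z\wedge x'$ and places it in $D^\succ$, and its transitivity step passes through $D^\succeq\backslash\{0\}\subseteq E^\Cap$, i.e.\ through the final equivalence of the display (which is why the paper proves transitivity last), whereas your chains of witnesses need only \eqref{Transitivity}, \eqref{LeftAuxiliarity}, \eqref{Multiplicativity} and the witness definition of $\Cap$. The first implication of the display is also fine on either reading of $\preceq$ on $\mathcal{P}(B)$, since the elementwise reading $C\subseteq D^\succeq$ implies your reflexivization reading $C^\succ\subseteq D^\succ$ by \eqref{LeftAuxiliarity}.

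The genuine gap is the last equivalence. You interpret $C\underset{\displaystyle\sim}{\preceq}D$ as the \eqref{Reflexivization} of the relation $\precsim$ on $\mathcal{P}(B)$, which turns the claim into the empty observation that a reflexive transitive relation equals its own reflexivization. That is not what the symbol means here: $\underset{\displaystyle\sim}{\preceq}$ stands to $\preceq$ as $\precsim$ stands to $\prec$, i.e.\ $C\underset{\displaystyle\sim}{\preceq}D$ means $C^\succeq\subseteq D^\Cap\cup\{0\}$ (the paper's own proof says exactly this). So the clause asserts that $\prec$ may be replaced by $\preceq$ in the definition of $\precsim$, and this has real content: it requires \eqref{Coinitiality} and is false without it. Indeed, if some non-zero $a$ has no non-zero $z\prec a$, then $\{a\}^\succ\subseteq\{0\}$ gives $\{a\}\precsim\emptyset$, yet $a\in\{a\}^\succeq$ and $\emptyset^\Cap=\emptyset$, so $\{a\}^\succeq\not\subseteq\emptyset^\Cap\cup\{0\}$; meanwhile the reflexivization of $\precsim$ \emph{does} relate $\{a\}$ to $\emptyset$ in this example, so your relation and the intended one genuinely differ. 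The missing argument is short but does something: given non-zero $x\in C^\succeq$, \eqref{Coinitiality} yields non-zero $x'\prec x$, \eqref{LeftAuxiliarity} puts $x'\in C^\succ\subseteq D^\Cap\cup\{0\}$, and any witness $v\neq0$ with $v\prec x',d$ for some $d\in D$ also satisfies $v\prec x$ by \eqref{Transitivity}, whence $x\in D^\Cap$. This upgrade from $\succ$ to $\succeq$ is precisely what the paper needs later (its own proof of $\precsim$-transitivity, and \autoref{cupcup}), so as written your proposal leaves one clause of the proposition unproven.
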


\begin{proof}\
\begin{itemize}
\item[($\prec$-transitivity)] If $C\prec D\prec E$ then $C\subseteq D^\succ\subseteq E^{\succ\succ}\subseteq E^\succ$, as $\prec$ satisfies \eqref{Transitivity} on $B$, i.e. $C\prec E$ so $\prec$ is transitive on $\mathcal{P}(B)$ too.

\item[($\bigcup$-additivity)] As $(\bigcup\mathcal{P})^\succ=\bigcup_{A\in P}A^\succ$ and $(\bigcup\mathcal{P})^\Cap=\bigcup_{A\in P}A^\Cap$, for any $\mathcal{P}\subseteq\mathcal{P}(B)$, the $\bigcup$-additivity of $\prec$ and $\precsim$ follows immediately from the $\bigcup$-additivity of $\subseteq$.

\item[($\prec$-$\wedge$-multiplicativity)] If $C\prec C'$ and $D\prec D'$ then $C\wedge D\subseteq C^\succeq\cap D^\succeq\subseteq C'^\succ\cap D'^\succ\subseteq(C'\wedge D')^\succ$, by \eqref{LeftAuxiliarity} and \eqref{Multiplicativity}, so $\prec$ is $\wedge$-multiplicative.

\item[($\precsim$-$\wedge$-multiplicativity)] If $C\precsim C'$ and $D\precsim D'$ then, for every $z\in(C\wedge D)^\succ\backslash\{0\}\subseteq C^\succ\backslash\{0\}$, we have $x'\in C'$ with $z\C x'$.  So $z\wedge x'\in D^\succ\backslash\{0\}$ and hence we have $y'\in D'$ with $z\wedge x'\C y'$, i.e. $z\wedge x'\wedge y'\neq0$ so $z\C x'\wedge y'$ and hence $z\in(C'\wedge D')^\Cap$.

\item[($C\preceq D\ \Rightarrow\ C\precsim D$)] Note $C\subseteq D^\succ$ implies $C^\succ\subseteq D^{\succ\succ}\subseteq D^\succ\subseteq D^\Cap\cup\{0\}$.

\item[($C\precsim D\ \Leftrightarrow\ C\underset{\displaystyle\sim}{\preceq}D$)] If $C\precsim D$ then, for all $x\in C^\succeq\backslash\{0\}$, \eqref{Coinitiality} yields non-zero $x'\prec x$\vspace{-4pt}
so $x'\in C^{\succeq\succ}\backslash\{0\}\subseteq C^\succ\backslash\{0\}\subseteq D^\Cap$ and hence $x\in D^{\Cap\prec}=D^\Cap$.  Thus $C^\succeq\subseteq D^\Cap\cup\{0\}$, i.e. $C\underset{\displaystyle\sim}{\preceq}D$.  The converse is immediate from $C^\succ\subseteq C^\succeq$.

\item[($\precsim$-transitivity)] If $C\precsim D\precsim E$ then, for every $x\in C^\succ\backslash\{0\}$, we have $y\in D$ with $x\C y$, i.e. $0\neq x\wedge y\preceq y$ so $x\wedge y\in D^\succeq\backslash\{0\}\subseteq E^\Cap$ and hence $x\in E^{\Cap\prec}=E^\Cap$.  Thus $C^\succ\subseteq E^\Cap\cup\{0\}$, i.e. $C\precsim E$ so $\precsim$ is transitive too.\qedhere
\end{itemize}
\end{proof}

Define $\pp$ on $\mathcal{P}(B)$ (representing \eqref{waybelowdef} \textendash\, see \eqref{CssD}) by
\[C\pp D\qquad\Leftrightarrow\qquad\exists F\in\mathcal{F}(B)\ (C\precsim F\prec D).\]

\begin{prp}\label{FprecA}
$\pp$ is transitive, $\cup$-additive, $\wedge$-multiplicative and, for $F\in\mathcal{F}(B)$,
\begin{align}
\label{FprecD}F\prec D\quad&\Rightarrow\quad F\pp D.\\
\label{CppD}\exists G\in\mathcal{F}(B)\ (C\pp G\prec D)\quad&\Leftrightarrow\quad C\pp D.\\
\label{CprecsimEppD}C\precsim E\pp D\quad&\Rightarrow\quad C\pp D\quad\Rightarrow\quad C\precsim D.
\end{align}
\end{prp}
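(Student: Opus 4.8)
The plan is to transfer each assertion about $\pp$ from the corresponding facts about $\prec$ and $\precsim$ recorded in \autoref{precprops}, the single genuinely new ingredient being interpolation, which enters only in the harder direction of \eqref{CppD}. A preliminary observation I will use repeatedly is that $C\prec D$ implies $C\precsim D$: indeed $C\prec D$ means every element of $C$ lies $\prec$-below some element of $D$, hence $\preceq$-below it by \eqref{Domination}, so $C\preceq D$ and therefore $C\precsim D$ by \autoref{precprops}. With this in hand, for transitivity I would take $C\pp D$ and $D\pp E$ witnessed by finite $F,G$ with $C\precsim F\prec D$ and $D\precsim G\prec E$; then $F\prec D$ gives $F\precsim D$, so $C\precsim F\precsim D\precsim G$ collapses to $C\precsim G$ by transitivity of $\precsim$, and $G\prec E$ yields $C\pp E$. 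The additivity and multiplicativity claims are even more direct: from $C_i\pp D_i$ witnessed by finite $F_i$, the $\bigcup$-additivity and $\wedge$-multiplicativity of $\prec$ and $\precsim$ give $C_1\cup C_2\precsim F_1\cup F_2\prec D_1\cup D_2$ and $C_1\wedge C_2\precsim F_1\wedge F_2\prec D_1\wedge D_2$, and since $F_1\cup F_2$ and $F_1\wedge F_2$ are again finite, this witnesses $C_1\cup C_2\pp D_1\cup D_2$ and $C_1\wedge C_2\pp D_1\wedge D_2$.

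The first two displayed items are then quick. For \eqref{FprecD}, if $F\prec D$ with $F$ finite, then since $\precsim$ is reflexive the chain $F\precsim F\prec D$ already witnesses $F\pp D$. For \eqref{CprecsimEppD}, if $C\precsim E$ and $E\pp D$, say $E\precsim F\prec D$ with $F$ finite, then $C\precsim E\precsim F$ gives $C\precsim F\prec D$ and hence $C\pp D$; and if $C\pp D$, say $C\precsim F\prec D$, then $F\prec D$ gives $F\precsim D$, so $C\precsim F\precsim D$ collapses to $C\precsim D$.

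The forward direction of \eqref{CppD} is of the same flavour: from $C\pp G\prec D$ with $G$ finite, I would unwind $C\pp G$ as $C\precsim F\prec G$ and then compose $F\prec G\prec D$ using transitivity of $\prec$ to obtain $C\precsim F\prec D$, that is, $C\pp D$.

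The main obstacle is the reverse direction of \eqref{CppD}: from $C\pp D$ I must manufacture a finite $G$ with $C\pp G\prec D$. Writing $C\precsim F\prec D$ with $F$ finite, the idea is to insert a finite set strictly between $F$ and $D$. For each of the finitely many $x\in F$ I would choose $y\in D$ with $x\prec y$ and then, interpolating $\prec$ on $B$, an element $z$ with $x\prec z\prec y$; collecting these $z$ into a set $G$ gives a finite set with $F\prec G\prec D$. Then $C\precsim F\prec G$ is exactly a witness for $C\pp G$, while $G\prec D$ completes $C\pp G\prec D$. The delicate point is precisely the interpolation step, i.e. passing from $x\prec y$ to an interpolant $x\prec z\prec y$; this is the content of \eqref{Interpolation}, which for a basic semilattice is supplied by the omission of \eqref{phin}. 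Establishing that omitting \eqref{phin} does yield these interpolants is where the substantive work lies, the remainder of the proposition being a mechanical transfer of \autoref{precprops} together with the finiteness of the witnessing sets.
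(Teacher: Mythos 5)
Your treatment of transitivity, $\cup$-additivity, $\wedge$-multiplicativity, \eqref{FprecD}, \eqref{CprecsimEppD} and the forward implication of \eqref{CppD} is correct and essentially identical to the paper's proof: all of these are mechanical transfers of \autoref{precprops} plus the finiteness of the witnessing sets, exactly as you say. The problem is the backward implication of \eqref{CppD}, at precisely the step you flagged as delicate. You want, for each $x\in F$ with $x\prec y\in D$, a single interpolant $z$ with $x\prec z\prec y$, and you assert this instance of \eqref{Interpolation} ``is supplied by the omission of \eqref{phin}''. It is not. Basic semilattices are not assumed to satisfy \eqref{Interpolation}, and omitting \eqref{phin} does not yield it. Unwinding the definition: omitting \eqref{phin} means that for each pair $x\prec y$ there is some $n$ for which $\phi_n(x,y)$ fails, i.e.\ there exist $w_1,\ldots,w_n\prec y$ and $v_1\prec w_1,\ldots,v_n\prec w_n$ such that every non-zero $x'\prec x$ intersects some $v_i$. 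Setting $V=\{v_1,\ldots,v_n\}$ and $W=\{w_1,\ldots,w_n\}$, this says exactly $\{x\}\precsim V\prec W\prec\{y\}$: the interpolants are \emph{finite sets}, not single elements, and the relation tying $x$ to them is only $\precsim$, not $\prec$. This weakening is forced by the intended model: for a $\cap$-closed but not $\cup$-closed basis, the finitely many basis elements squeezed between $\overline{O}$ and $N$ cannot be replaced by their union, which need not lie in $B$. Indeed, the unavailability of single-element interpolation is the very reason the proposition works with finite subsets and $\pp$ in the first place.

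The repair is local and is what the paper does. For each $x\in F$ take $V_x,W_x\in\mathcal{F}(B)$ with $\{x\}\precsim V_x\prec W_x\prec\{y_x\}$ as above, set $H=\bigcup_{x\in F}V_x$ and $G=\bigcup_{x\in F}W_x$, and use the $\cup$-additivity of $\precsim$ and $\prec$ from \autoref{precprops} to obtain
\[C\precsim F\precsim H\prec G\prec\{y_x:x\in F\}\subseteq D.\]
Transitivity of $\precsim$ then collapses this to $C\precsim H\prec G\prec D$, i.e.\ $C\pp G\prec D$. Note that the rest of your argument survives this change unharmed: $C\pp G$ is witnessed by $H$ rather than by $F$, since in place of your $F\prec G$ one only has $F\precsim H\prec G$, which is all that is needed.
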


\begin{proof}\
\begin{itemize}
\item[\eqref{FprecD}]  If $F\prec A$ then $F\precsim F\prec A$ so $F\pp A$.

\item[\eqref{CppD}] If $C\pp D$ then $C\precsim F\prec D$, for some $F\in\mathcal{F}(B)$.  Thus, for all $x\in F$, we have $y_x\in D$ with $x\prec y_x$.  As $B$ omits \eqref{phin}, we have $V_x,W_x\in\mathcal{F}(B)$ such that $\{x\}\precsim V_x\prec W_x\prec\{y_x\}$.  Setting $G=\bigcup_{x\in F}W_x$ and $H=\bigcup_{x\in F}V_x$, $\cup$-additivity yields $C\precsim F\precsim H\prec G\prec\{y_x:x\in F\}\subseteq D$.  As $\precsim$ is transitive, $C\precsim H\prec G\prec D$ so $C\pp G\prec D$.

If $C\pp G\prec D$ then we have $F\in\mathcal{F}(B)$ with $C\precsim F\prec G\prec D$ so the transitivity of $\prec$ yields $C\precsim F\prec D$, i.e. $C\pp D$.

\item[\eqref{CprecsimEppD}] If $C\precsim E\pp D$ then we have $F\in\mathcal{F}(B)$ with $C\precsim E\precsim F\prec D$ so the transitivity of $\precsim$ yields $C\precsim F\prec D$, i.e. $C\pp D$.

If $C\pp D$ then we have $F\in\mathcal{F}(B)$ with $C\precsim F\prec D$ so, as $\prec$ is stronger than $\precsim$, $C\precsim F\precsim D$ and hence $C\precsim D$, again by the transitivity of $\precsim$.
\end{itemize}

As $\prec$ and $\precsim$ are $\cup$-additive and $\wedge$-multiplicative, so is $\pp$.

If $C\pp D\pp E$ then $C\precsim F\prec D\precsim G\prec E$, for $F,G\in\mathcal{F}(B)$.  As $F\prec D$ implies $F\precsim D$ and $\precsim$ is transitive, $C\precsim G\prec E$, i.e. $C\pp E$ so $\pp$ is transitive too.
\end{proof}

Define the \emph{saturation} of any $A\subseteq B$ by
\[A^\cup={\displaystyle\bigcup_{C\pp A}}C=\{y\in B:\{y\}\pp A\}.\]

\begin{prp}\label{cupcup}
For all $A,C\subseteq B$ and $F\in\mathcal{F}(B)$,
\begin{align}
\label{FppA}F\subseteq A^\cup\quad&\Leftrightarrow\quad F\pp A.\\
\label{CppA}C\pp A^\cup\quad&\Leftrightarrow\quad C\pp A\quad\Leftrightarrow\quad C^\cup\pp A.
\end{align}
\begin{equation}\label{Acup=}
A^\succ\subseteq A^\cup=A^{\cup\succeq}=A^{\succeq\cup}=A^{\succ\cup}=A^{\cup\cup}=\bigcup_{G\in\mathcal{F}(A)}G^\cup\precsim A.
\end{equation}
\end{prp}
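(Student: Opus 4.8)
The plan is to treat \eqref{FppA}, \eqref{CppA} and \eqref{Acup=} purely as exercises in juggling the three transitive relations $\prec$, $\precsim$ and $\pp$, using nothing beyond their $\bigcup$-additivity and $\wedge$-multiplicativity from \autoref{precprops} and \autoref{FprecA} together with \eqref{FprecD}, \eqref{CppD} and \eqref{CprecsimEppD}. Before anything else I would record two monotonicity principles that drop straight out of the definitions and get used constantly: the saturation $A\mapsto A^\cup$ is $\subseteq$-monotone, and $\pp$ is $\subseteq$-monotone in its right argument (if $C\pp G$ and $G\subseteq D$ then $C\pp D$, since a witnessing $H\prec G$ forces $H\subseteq G^\succ\subseteq D^\succ$). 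These, plus $A^\succ\subseteq A^\cup$ \textendash\ which is immediate from \eqref{FprecD} applied to $\{y\}\prec\{a\}\subseteq A$ \textendash\ are the workhorses.

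I would prove \eqref{FppA} first. For $\Leftarrow$, if $F\pp A$ then each $y\in F$ satisfies $\{y\}\subseteq F$, hence $\{y\}\precsim F\pp A$, so $\{y\}\pp A$ by \eqref{CprecsimEppD}; thus $F\subseteq A^\cup$. For $\Rightarrow$, writing the finite $F$ as $\bigcup_{y\in F}\{y\}$ with each $\{y\}\precsim F_y\prec A$ for a finite $F_y$, the union $\bigcup_{y\in F}F_y$ is again finite, so $\bigcup$-additivity gives $F\precsim\bigcup_y F_y\prec A$, i.e.\ $F\pp A$. At this point the routine half of \eqref{Acup=} also falls out: $A^\cup\precsim A$ because every $C\pp A$ has $C\precsim A$ by \eqref{CprecsimEppD} and $\precsim$ is $\bigcup$-additive; $A^\cup=\bigcup_{G\in\mathcal{F}(A)}G^\cup$ by monotonicity together with the fact that a finite $F\prec A$ may be replaced by a finite set $G\subseteq A$ of witnesses; and the identities $A^\cup=A^{\cup\succeq}=A^{\succeq\cup}=A^{\succ\cup}$ come from downward-closedness of $A^\cup$ (if $z\preceq y\in A^\cup$ then $\{z\}\precsim\{y\}\pp A$, so $z\in A^\cup$), from \eqref{LeftAuxiliarity} for the $\succeq$-layer, and from \eqref{CppD} for the $\succ$-layer.

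The substance is in \eqref{CppA}. For $C\pp A\Leftrightarrow C^\cup\pp A$ I would fix the finite $F$ with $C\precsim F\prec A$ and observe that \emph{every} $D\pp C$ satisfies $D\precsim C\precsim F$, so this single $F$ witnesses $\bigcup_{D\pp C}D=C^\cup\precsim F\prec A$ by $\bigcup$-additivity of $\precsim$; the converse is $C\precsim C^\cup\pp A\Rightarrow C\pp A$, where $C\precsim C^\cup$ holds because $C^\succ\setminus\{0\}\subseteq C^\cup$ and every nonzero element intersects itself by \eqref{Coinitiality}, so it lands in $(C^\cup)^\Cap$. For $C\pp A\Leftrightarrow C\pp A^\cup$ I would use \eqref{CppD} to extract a finite $G$ with $C\pp G\prec A$; then $G\subseteq A^\succ\subseteq A^\cup$, so right-monotonicity gives $C\pp A^\cup$, and conversely a finite $G$ with $C\pp G\prec A^\cup$ has $G\subseteq(A^\cup)^\succ\subseteq A^\cup$ by downward-closedness, whence $G\pp A$ by \eqref{FppA} and $C\pp A$ by transitivity of $\pp$. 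The remaining equality $A^{\cup\cup}=A^\cup$ is then just the singleton case $C=\{y\}$ of $C\pp A^\cup\Leftrightarrow C\pp A$, finishing \eqref{Acup=}.

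The one genuine obstacle, and the step I would be most careful about, is that $\pp$ is only safely additive over \emph{finite} unions (its defining interpolant must be finite), whereas $C^\cup$ and $A^\cup$ are arbitrary unions. Every place where an infinite saturation meets $\pp$ therefore has to be funnelled either through the common-interpolant trick for the genuinely $\bigcup$-additive relation $\precsim$, or through \eqref{CppD}, which lets me always retreat to a finite $G$ lying inside $A^\succ\subseteq A^\cup$ and then invert using downward-closedness. Keeping track of which of $\prec$, $\precsim$, $\pp$ is in force at each line, and never appealing to additivity of $\pp$ across an infinite index set, is where the bookkeeping is delicate.
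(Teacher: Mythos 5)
Your proof is correct and takes essentially the same approach as the paper's: both reduce everything to \eqref{FprecD}, \eqref{CppD}, \eqref{CprecsimEppD}, and the transitivity, ($\bigcup$-)additivity and monotonicity properties of $\prec$, $\precsim$ and $\pp$ from \autoref{precprops} and \autoref{FprecA}, with the same decomposition into sub-claims. The only cosmetic difference is that you obtain $A^{\cup\cup}=A^\cup$ as the singleton case of \eqref{CppA}, whereas the paper proves it directly from $A^\cup=\bigcup_{G\in\mathcal{F}(A^\cup)}G^\cup$ together with \eqref{FppA}; your ordering avoids circularity since your proof of \eqref{CppA} uses only facts you establish beforehand.
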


\begin{proof}\
\begin{itemize}
\item[\eqref{FppA}]  If $F\pp A$ then $F\subseteq A^\cup$ by definition.  Conversely, if $F\subseteq A^\cup$ then, for each $y\in F$, we have some $G_y\in\mathcal{F}(B)$ with $\{y\}\precsim G_y\prec A$.  Thus $\cup$-additivity yields $F\precsim\bigcup_{y\in F}G_y\prec A$ and hence $F\pp A$.

\item[($A^\cup\precsim A$)]  If $\{x\}\pp A$ then $\{x\}\precsim A$ and hence $A^\cup\precsim A$ by the $\bigcup$-additivity of $\precsim$.

\item[($A^\cup=\bigcup_{F\in\mathcal{F}(A)}F^\cup$)] If $\{y\}\pp A$ then $\{y\}\precsim F\prec A$, for some $F\in\mathcal{F}(B)$, so $F\prec G$, for some $G\in\mathcal{F}(A)$, and hence $\{y\}\precsim F\prec G$, i.e. $\{y\}\pp G$.

\item[($A^\succ\subseteq A^\cup$)]  If $x\prec y\in A$ then $\{x\}\precsim\{x\}\prec A$ so $\{x\}\pp A$.

\item[($A^\cup=A^{\cup\succeq}$)]  If $x\preceq y\in A^\cup$ then $\{x\}\precsim\{y\}\precsim G\prec A$, for some $G\in\mathcal{F}(B)$.  As $\precsim$ is transitive, $\{x\}\precsim G\prec A$ and hence $\{x\}\pp A$.

\item[($A^\cup=A^{\succeq\cup}$)]  By \eqref{LeftAuxiliarity}, $G\prec A$ iff $G\prec A^\succeq$.

\item[($A^{\cup\cup}=A^\cup=A^{\succ\cup}$)]  By \eqref{CppD}, if $y\in A^\cup$ then $\{y\}\pp G\subseteq A^\succ$, for some $G\in\mathcal{F}(B)$, showing $A^\cup\subseteq A^{\succ\cup}\subseteq A^{\cup\cup}$.  Conversely, if $y\in A^{\cup\cup}$ then $y\in G^\cup$, for some $G\in\mathcal{F}(A^\cup)$, so $\{y\}\pp G\pp A$ and hence $\{y\}\pp A$, showing $A^{\cup\cup}\subseteq A^\cup$.

\item[\eqref{CppA}]  If $C\pp A^\cup$ then we have $F\in\mathcal{F}(B)$ with $C\precsim F\prec A^\cup$.  Therefore $F\subseteq A^{\cup\succ}\subseteq A^{\cup\succeq}=A^\cup$ so $C\precsim F\pp A$ and hence $C\pp A$, by \eqref{CprecsimEppD}.

Conversely, if $C\pp A$ then we have $G\in\mathcal{F}(B)$ with $C\pp G\prec A$, by \eqref{CppD}.  Thus $G\subseteq A^\succ\subseteq A^\cup$ so $C\pp A^\cup$.

If $C^\cup\pp A$ then $C\precsim C^\succ\subseteq C^\cup\pp A$ so $C\pp A$ too.

Conversely, if $C\pp A$ then $C^\cup\precsim C\pp A$ so $C^\cup\pp A$.\qedhere
\end{itemize}
\end{proof}

In particular, $A\mapsto A^\cup$ behaves much like a closure operator on $\mathcal{P}(B)$, being idempotent ($A=A^{\cup\cup}$), increasing ($A\subseteq B\Rightarrow A^\cup\subseteq B^\cup$) and even finitary ($A^\cup=\bigcup_{G\in\mathcal{F}(A)}G^\cup$).  However, it is usually not extensive, i.e. we can have $A\nsubseteq A^\cup$.

In the next result we use some standard terminology from frame and domain theory (see e.g. \cite{GierzHofmannKeimelLawsonMisloveScott2003}, \cite{PicadoPultr2012} or \cite{Goubault2013}).  Specifically, by a \emph{frame} we mean a complete lattice $L$ where finite meets distribute over arbitrary joins.  For $x,y\in L$, we say $x$ is \emph{way-below} $y$ if $x\in Z^\succeq$ whenever $y\preceq\bigvee Z$ for $\succeq$-directed $Z$.  And we say $L$ is \emph{continuous} if every $x\in L$ is the join of those elements way-below $x$.

Denote the saturated subsets generated by $\mathcal{P}\subseteq\mathcal{P}(B)$ by
\[\mathcal{P}^\cup=\{A^\cup:A\in\mathcal{P}\}.\]
Just as the saturated subsets in formal topology yield frames (see \cite{CirauloMaiettiSambin2013} Definition 4.1), saturated subsets of basic semilattices yield continuous frames.

\begin{thm}\label{contframe}
$(\mathcal{P}(B)^\cup,\subseteq)$ is a continuous frame with way-below relation $\pp$ and 
\begin{align}
\label{supP}\bigvee_{A\in\mathcal{P}}A^\cup&=(\bigcup\mathcal{P})^\cup&(\text{taking $\bigvee$ in }\mathcal{P}(B)^\cup).\\
\label{CcapD}C^\cup\wedge D^\cup&=(C\wedge D)^\cup=C^\cup\cap D^\cup&(\text{taking $\wedge$ from \eqref{CwedgeDdef}}).
\end{align}
So meets in $\mathcal{P}(B)^\cup$ coincide with both $\cap$ and $\wedge$ on $\mathcal{P}(B)$.
\end{thm}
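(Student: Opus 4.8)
The plan is to verify, in order, the join formula \eqref{supP} (which already makes $\mathcal{P}(B)^\cup$ a complete lattice), then the meet formula \eqref{CcapD}, from which frame distributivity is almost formal, and finally to identify the frame's way-below relation with $\pp$, yielding continuity. For \eqref{supP}, monotonicity of $A\mapsto A^\cup$ shows $(\bigcup\mathcal{P})^\cup$ is an upper bound of $\{A^\cup:A\in\mathcal{P}\}$ in $\mathcal{P}(B)^\cup$. To see it is the least, suppose $E=E^\cup$ contains every $A^\cup$. Given $y\in(\bigcup\mathcal{P})^\cup$, i.e. $\{y\}\pp\bigcup\mathcal{P}$, pick finite $F$ with $\{y\}\precsim F\prec\bigcup\mathcal{P}$ and split $F=\bigcup_iF_i$ where $F_i\prec A_i$ for some $A_i\in\mathcal{P}$; then \eqref{FprecD} and \eqref{FppA} give $F_i\subseteq A_i^\cup\subseteq E$, so $F\subseteq E=E^\cup$ and hence $F\pp E$ by \eqref{FppA}. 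Now $\{y\}\precsim F\pp E$ forces $\{y\}\pp E$ by \eqref{CprecsimEppD}, so $y\in E^\cup=E$. Since all joins exist (the empty join being $\emptyset^\cup=\emptyset$), $\mathcal{P}(B)^\cup$ is a complete lattice.

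Next I would prove $C^\cup\cap D^\cup=(C\wedge D)^\cup$ pointwise. If $\{y\}\pp C\wedge D$, a witnessing finite $F\prec C\wedge D$ has each element $\prec$ some $x\wedge x'\preceq x\in C$, so \eqref{LeftAuxiliarity} gives $F\prec C$ and likewise $F\prec D$, whence $\{y\}\pp C$ and $\{y\}\pp D$; conversely $\{y\}\pp C$ and $\{y\}\pp D$ give $\{y\}=\{y\}\wedge\{y\}\pp C\wedge D$ by the $\wedge$-multiplicativity of $\pp$ from \autoref{FprecA}. As $(C\wedge D)^\cup$ is saturated and sits inside both $C^\cup$ and $D^\cup$, while any saturated $E\subseteq C^\cup\cap D^\cup$ lies in it, it is the meet, establishing \eqref{CcapD}. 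Frame distributivity is then formal: for any family, \eqref{supP} and \eqref{CcapD} yield $C^\cup\wedge\bigvee_iD_i^\cup=\big(C\wedge\bigcup_iD_i\big)^\cup=\big(\bigcup_i(C\wedge D_i)\big)^\cup=\bigvee_i(C^\cup\wedge D_i^\cup)$, using the purely set-theoretic identity $C\wedge\bigcup_iD_i=\bigcup_i(C\wedge D_i)$.

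Finally I would identify way-below with $\pp$. If $C\pp D$, fix finite $F$ with $C\precsim F\prec D$; for a directed family $(S_i)$ of saturated sets with $D^\cup\subseteq(\bigcup_iS_i)^\cup$, then $F\subseteq D^\cup\subseteq(\bigcup_iS_i)^\cup$ gives $F\pp\bigcup_iS_i$, and a finite witness together with directedness places all relevant elements in a single $S_j$, so $F\prec S_j$, whence $F\subseteq S_j^\cup=S_j$ and $C\precsim F\pp S_j$ forces $C\pp S_j$ by \eqref{CprecsimEppD}; transitivity of $\pp$ then gives $C^\cup\subseteq S_j$, so $C^\cup$ is way-below $D^\cup$. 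Conversely, the family $\{F^\cup:F\in\mathcal{F}(B),\ F\prec D\}$ is directed with join $D^{\succ\cup}=D^\cup$ by \eqref{Acup=}, so $C^\cup$ way-below $D^\cup$ yields $C^\cup\subseteq F^\cup$ for some finite $F\prec D$; since $x\in Y^\cup$ gives $\{x\}\precsim Y$ (via \eqref{CprecsimEppD}), $\bigcup$-additivity of $\precsim$ gives $C^\cup\precsim F\prec D$, hence $C^\cup\pp D$, i.e. $C\pp D$ by \eqref{CppA}. Continuity then follows, as each $C\pp A$ gives $C^\cup\subseteq A^\cup$ by transitivity while every $y\in A^\cup$ has $\{y\}\pp A$, so $\bigvee\{C^\cup:C\pp A\}=A^\cup$. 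I expect the way-below analysis to be the main obstacle: because $A\mapsto A^\cup$ is not extensive, one cannot argue directly with generators and must repeatedly route through \eqref{Acup=} and \eqref{CppA} and choose the directed families with care.
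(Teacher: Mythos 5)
Your proof follows the paper's own proof in all essentials: the same decomposition (joins via \eqref{supP}, meets via \eqref{CcapD}, distributivity by the same one-line computation, then the two-way identification of way-below with $\pp$), resting on the same lemmas \autoref{FprecA} and \autoref{cupcup}; where you differ (elementwise verifications in \eqref{supP} and in the way-below step, versus the paper's operator-style computations with \eqref{Acup=}), the difference is cosmetic. However, one substantive point needs repair: you never prove the first equality in \eqref{CcapD}. There $C^\cup\wedge D^\cup$ denotes the \emph{elementwise} meet $\{x\wedge y:x\in C^\cup,\,y\in D^\cup\}$ from \eqref{CwedgeDdef}, not the lattice meet in $\mathcal{P}(B)^\cup$, which is how you read it; that the elementwise $\wedge$ of two saturated sets again equals their intersection is part of the theorem (``meets \ldots coincide with both $\cap$ and $\wedge$''). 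The fix is short and uses exactly your tools: $C^\cup\cap D^\cup\subseteq C^\cup\wedge D^\cup$ since $x=x\wedge x$, while for $x\in C^\cup$ and $y\in D^\cup$, $\wedge$-multiplicativity gives $\{x\wedge y\}=\{x\}\wedge\{y\}\pp C\wedge D$, so $C^\cup\wedge D^\cup\subseteq(C\wedge D)^\cup$; the paper instead notes $C^\cup\wedge D^\cup\subseteq C^{\cup\succeq}\cap D^{\cup\succeq}=C^\cup\cap D^\cup$ via \eqref{Acup=}.

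Two smaller slips. First, $\emptyset^\cup\neq\emptyset$: in fact $0\in A^\cup$ for \emph{every} $A\subseteq B$, since $\{0\}^\succ\subseteq\{0\}$ (as $z\prec 0$ forces $z\preceq 0$, so $z=0$) gives $\{0\}\precsim\emptyset$, and $\emptyset\prec A$ holds vacuously, so $\{0\}\pp A$. Hence the bottom of the frame is $\emptyset^\cup=\{0\}$ and $\emptyset\notin\mathcal{P}(B)^\cup$. This is harmless to completeness (the empty join is still $\emptyset^\cup$), but the stated identity is false. Second, in the way-below argument, what follows from $F\pp\bigcup_iS_i$ and directedness is a finite witness $G$ with $F\precsim G\prec S_j$, i.e.\ $F\pp S_j$, not ``$F\prec S_j$'' as written; your subsequent steps only use $F\pp S_j$ (via \eqref{FppA} and \eqref{CprecsimEppD}), so nothing downstream breaks. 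With these repairs the proof is complete and matches the paper's.
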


\begin{proof}\
\begin{itemize}
\item[\eqref{supP}] For all $A\in\mathcal{P}$, $A\subseteq\bigcup\mathcal{P}$ so $A^\cup\subseteq(\bigcup\mathcal{P})^\cup$.  Conversely, if $O\in\mathcal{P}(B)^\cup$ and $A^\cup\subseteq O$, for all $A\in\mathcal{P}$, then, by \autoref{cupcup},
\[(\bigcup\mathcal{P})^\cup=(\bigcup\mathcal{P})^{\succ\cup}=(\bigcup_{A\in\mathcal{P}}A^\succ)^\cup\subseteq(\bigcup_{A\in\mathcal{P}}A^\cup)^\cup\subseteq O^\cup=O.\]

\item[\eqref{CcapD}] $C^\cup\cap D^\cup\subseteq C^\cup\wedge D^\cup$ is immediate and the $\wedge$-multiplicativity of $\pp$ yields $C^\cup\cap D^\cup\subseteq(C\wedge D)^\cup$.  Conversely, \autoref{cupcup} yields
\begin{align*}
(C\wedge D)^\cup&\subseteq C^{\succeq\cup}\cap D^{\succeq\cup}=C^\cup\cap D^\cup.\\
C^\cup\wedge D^\cup&\subseteq C^{\cup\succeq}\cap D^{\cup\succeq}=C^\cup\cap D^\cup.
\end{align*}

\item[(Distributivity)] Take $A\subseteq B$ and $\mathcal{P}\subseteq\mathcal{P}(B)$.  By \eqref{CcapD},
\[A^\cup\wedge\bigvee\mathcal{P}^\cup=(A\wedge\bigcup\mathcal{P})^\cup=(\bigcup_{C\in\mathcal{P}}(A\wedge C))^\cup=\bigvee_{C\in\mathcal{P}}(A\wedge C)^\cup=\bigvee_{C\in\mathcal{P}}(A^\cup\wedge C^\cup).\]

\item[(Way-Below)] Take $C\subseteq B$ so $C^\cup=C^{\succ\cup}=\bigcup_{F\in\mathcal{F}(C^\succ)}F^\cup$ and this latter union is directed.  Thus if $D\subseteq B$ and $D^\cup$ is way-below $C^\cup$ in $\mathcal{P}(B)^\cup$ then by definition $D^\cup\subseteq F^\cup$, for some $F\in\mathcal{F}(C^\succ)$.  Thus $D\precsim D^\succ\subseteq D^\cup\subseteq F^\cup\precsim F\prec C$ so $D\precsim F\prec C$, i.e. $D\pp C$.

By \eqref{FprecD} and \eqref{CppA}, $F^\cup\pp C^\cup$, for all $F\in\mathcal{F}(C^\succ)$, so the continuity of $\mathcal{P}(B)^\cup$ will follow from the converse.  For this, assume $D\pp C\subseteq\bigvee_{A\in\mathcal{P}}A^\cup$, for some $C,D\subseteq B$ and $\mathcal{P}\subseteq\mathcal{P}(B)$.  So we have $F\in\mathcal{F}(B)$ with
\[D\precsim F\prec\bigvee_{A\in\mathcal{P}}A^\cup=(\bigcup\mathcal{P})^\cup=\bigcup_{G\in\mathcal{F}(\bigcup\mathcal{P})}G^\cup.\]
As this last union is directed, we have $G\in\mathcal{F}(\bigcup\mathcal{P})$ with $D\precsim F\prec G^\cup$, i.e. $D\pp G^\cup$ so $D\pp G$, by \eqref{CppA}, and hence $D\subseteq G^\cup$.  Taking finite $\mathcal{G}\subseteq\mathcal{P}$ with $G\subseteq\bigcup\mathcal{G}$ yields $D\subseteq(\bigcup\mathcal{G})^\cup=\bigvee_{A\in\mathcal{G}}A^\cup$.  As $\mathcal{P}$ was arbitrary, this shows that $D$ is way-below $C$ in $\mathcal{P}(B)^\cup$, as long as $C$ and $D$ are in $\mathcal{P}(B)^\cup$.\qedhere
\end{itemize}
\end{proof}

Recall that we denote the singleton subsets of $B$ by $\mathcal{S}(B)=\{\{b\}:b\in B\}$.  We call a subset $S$ of a lattice $L$ \emph{$\bigvee$-dense} (\emph{$\vee$-dense}) if every element of $L$ is a (finite) join of elements in $S$.

\begin{cor}\label{semicor}\
\begin{enumerate}
\item\label{MSSL} $(\mathcal{S}(B)^\cup,\subseteq)$ is a $\vee$-dense meet subsemilattice of $(\mathcal{F}(B)^\cup,\subseteq)$.
\item\label{SL} $(\mathcal{F}(B)^\cup,\subseteq)$ is a $\bigvee$-dense sublattice of $(\mathcal{P}(B)^\cup,\subseteq)$.
\item\label{ISO} $(\mathcal{S}(B)^\cup,\pp)$ is isomorphic to $(B,\prec)$.
\item\label{FBL} $(\mathcal{F}(B)^\cup,\pp)$ is a basic lattice.
\end{enumerate}
\end{cor}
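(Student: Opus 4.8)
The plan is to establish \autoref{semicor} by proving the four parts in sequence, with each leaning on the structural results from \autoref{contframe} and \autoref{cupcup}. The unifying idea is that all four statements are assertions about the three nested families $\mathcal{S}(B)^\cup\subseteq\mathcal{F}(B)^\cup\subseteq\mathcal{P}(B)^\cup$ inside the continuous frame, so my first move is to record that $\wedge$ on saturated sets is computed by \eqref{CcapD} and $\bigvee$ by \eqref{supP}, which will be invoked repeatedly.

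For \eqref{MSSL} and \eqref{SL}, the density claims are essentially bookkeeping: by \eqref{supP}, any $A^\cup=(\bigcup_{x\in A}\{x\})^\cup=\bigvee_{x\in A}\{x\}^\cup$, so singletons are $\bigvee$-dense in $\mathcal{P}(B)^\cup$, and for finite $A$ this join is finite, giving $\vee$-density of $\mathcal{S}(B)^\cup$ in $\mathcal{F}(B)^\cup$; the same display shows $\mathcal{F}(B)^\cup$ is $\bigvee$-dense in $\mathcal{P}(B)^\cup$. The closure claims require that the relevant family be closed under the operations inherited as a sub(semi)lattice: for \eqref{MSSL} I must check $\{x\}^\cup\wedge\{y\}^\cup$ lands back in $\mathcal{S}(B)^\cup$, which follows from \eqref{CcapD} since $\{x\}\wedge\{y\}=\{x\wedge y\}$ and $B$ is a meet semilattice; for \eqref{SL}, closure of $\mathcal{F}(B)^\cup$ under finite $\vee$ is immediate from \eqref{supP} (a finite union of finite sets is finite) and closure under $\wedge$ follows from \eqref{CcapD} together with the finiteness of $C\wedge D$ for finite $C,D$.

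For \eqref{ISO}, I would define the map $b\mapsto\{b\}^\cup$ and show it is an order isomorphism carrying $\prec$ to $\pp$. Injectivity and the relation-correspondence both reduce to the equivalence $\{x\}\pp\{y\}\Leftrightarrow x\prec y$: the $\Leftarrow$ direction is \eqref{FprecD} applied to the singleton, while $\Rightarrow$ should come from unwinding $\{x\}\precsim F\prec\{y\}$ and using that omitting \eqref{phin} forces interpolation to be available; this is where I expect the omitted-types hypothesis to do real work. Surjectivity onto $\mathcal{S}(B)^\cup$ is by definition.

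The substantive part is \eqref{FBL}: verifying that $(\mathcal{F}(B)^\cup,\pp)$ satisfies every clause of \autoref{BasicLatticeDfn}. Transitivity, \eqref{Cofinality}, \eqref{Multiplicativity} and \eqref{Additivity} for $\pp$ are handed to me by \autoref{FprecA}, and the lattice structure with $\wedge,\vee$ computed by \eqref{CcapD} and \eqref{supP} is already in place. The real obstacle, and the step I expect to be hardest, is \eqref{Decomposition}: given $Z^\cup\pp C^\cup\vee D^\cup$ with $C,D$ finite, I must produce finite $C'\pp C$ and $D'\pp D$ with $Z^\cup=C'^\cup\vee D'^\cup$. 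Here the finitary/directed description $\bigvee_{A}A^\cup=(\bigcup\mathcal{P})^\cup$ from \eqref{supP}, combined with \eqref{CppD} to pull $Z$ below a finite intermediate set, lets me split the generators of $Z$ according to whether they sit below $C$ or below $D$; the omission of \eqref{psin} is what should guarantee the resulting pieces recombine correctly, playing the role that compactness of closures played in \autoref{BasicBasis}. Finally \eqref{Complementation}, \eqref{Coinitiality} and \eqref{Interpolation} for $\pp$ I would deduce analogously, again translating the omitted types \eqref{phin} and \eqref{psin} into the interpolation and $\prec$-Below behaviour flagged in the remark following their definition.
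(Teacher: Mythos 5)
Parts \eqref{MSSL} and \eqref{SL} of your proposal are fine and match the paper's proof (density and closure both fall out of \eqref{supP} and \eqref{CcapD}). The first genuine gap is in \eqref{ISO}: you attribute the implication $\{x\}^\cup\pp\{y\}^\cup\Rightarrow x\prec y$ to the omission of \eqref{phin}, but omitting a type can never yield this implication, because every instance of \eqref{phin} has $x\prec y$ as a conjunct of its hypothesis; omitting it therefore only constrains pairs that \emph{already} satisfy $x\prec y$ and says nothing about pairs with $x\not\prec y$. What is needed is the satisfaction of the sentences \eqref{thetan}, which convert ``$\exists$ finite $F$ with $\{x\}\precsim F\prec\{y\}$'' (i.e.\ $\{x\}\pp\{y\}$, after unwinding \eqref{CppA}) into $x\prec y$. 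This is exactly how the paper argues, and it is the only place \eqref{thetan} is used; your proposal never invokes \eqref{thetan} at all.

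For \eqref{FBL} there are two further problems. First, a concrete error: \autoref{FprecA} gives transitivity, $\cup$-additivity and $\wedge$-multiplicativity of $\pp$, but \emph{not} \eqref{Cofinality}; in the paper \eqref{Cofinality} is absorbed into \eqref{Complementation} via the combined axiom \eqref{<Below}, whose verification rests on omitting \eqref{psin} (e.g.\ applied with $x=y=0$). Second, your assignment of the omitted types to the axioms is backwards. \eqref{Decomposition} does not come from omitting \eqref{psin}: the paper avoids proving \eqref{Decomposition} and \eqref{Interpolation} directly, instead invoking the reformulation noted in \autoref{BL} and checking \eqref{Distributivity} (automatic, since any sublattice of the frame $\mathcal{P}(B)^\cup$ of \autoref{contframe} is distributive) and \eqref{vInterpolation} (automatic for a $\bigvee$-dense sublattice of a continuous lattice). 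A direct proof of \eqref{Decomposition} along your splitting lines can in fact be made to work, but the tools are \eqref{CppD}, \eqref{CcapD} and frame distributivity \textendash\ i.e.\ the consequences of omitting \eqref{phin} \textendash\ not \eqref{psin}. Meanwhile, the place where omitting \eqref{psin} genuinely does the work is precisely the step you dismiss as ``deduce analogously'': proving \eqref{<Below} for $\pp$ requires upgrading the single-element statement supplied by the omission of \eqref{psin} to finite sets $X\pp Y$ and $Z$, and this is the most technical part of the paper's proof (it constructs $U$, $V$, $W$ as unions and meets of families $y'_{x,z}$, $V_{x,z}$, $W_{x,z}$ indexed over an interpolating finite set, using \autoref{precprops}). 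As written, the hardest content of \eqref{FBL} is missing from your proposal, and the mechanism you name for \eqref{Decomposition} would not supply it.
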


\begin{proof}\
\begin{enumerate}
\item[\eqref{MSSL}] By \eqref{CcapD}, the meet of $\{x\}^\cup$ and $\{y\}^\cup$ in $(\mathcal{P}(B)^\cup,\subseteq)$ is $\{x\wedge y\}^\cup$.  By \eqref{supP}, $F^\cup=(\bigcup_{x\in F}\{x\})^\cup=\bigvee_{x\in F}\{x\}^\cup$, for all $F\in\mathcal{F}(B)$, so $\vee$-density follows.

\item[\eqref{SL}] By \eqref{supP}, the join of $F^\cup$ and $G^\cup$ in $(\mathcal{P}(B)^\cup,\subseteq)$ is $(F\cup G)^\cup$.  Thus, as $\mathcal{F}(B)$ is $\cup$-closed, $(\mathcal{F}(B)^\cup,\subseteq)$ is a sublattice of $(\mathcal{P}(B)^\cup,\subseteq)$.  Again by \eqref{supP} we obtain $\bigvee$-density.

\item[\eqref{ISO}] If $x\prec y$ then $\{x\}\pp\{y\}$, by \eqref{FprecD}, and hence $\{x\}^\cup\pp\{y\}^\cup$, by \eqref{CppA}.  As $B$ satisfies \eqref{thetan}, the converse also holds.

\item[\eqref{FBL}] We verify a sufficient collection of axioms from \autoref{BL}.
\begin{itemize}
\item[\eqref{Multiplicativity}] See \autoref{FprecA}.

\item[\eqref{Distributivity}] By \autoref{contframe}, $(\mathcal{P}(B)^\cup,\subseteq)$ is distributive, thus so is any sublattice.

\item[\eqref{vInterpolation}] This holds for any $\bigvee$-dense sublattice of a continuous lattice, so again this follows from \autoref{contframe}.

\item[\eqref{Additivity}] Likewise, this holds for any join subsemilattice of a continuous lattice.

\item[\eqref{<Below}] As $B$ omits \eqref{psin}, for any $x,y,z\in B$ with $x\prec y$, we have $y'\prec y$ and $V,W\in\mathcal{F}(B)$ with $\{z\}\precsim\{y'\}\cup V$ and $V\prec W\perp x$.  We need to extend this to $\mathcal{F}(B)$.  So take $X,Y,Z\in\mathcal{F}(B)$ with $X\pp Y$, which means we have $F\in\mathcal{F}(B)$ with $X\precsim F\prec Y$.  For each $x\in F$, we have $y_x\in Y$ with $x\prec y_x$ and thus, for each $z\in Z$, we have $y'_{x,z}\prec y_x$ and $V_{x,z},W_{x,z}\in\mathcal{F}(B)$ with $\{z\}\precsim\{y'_{x,z}\}\cup V_{x,z}$ and $V_{x,z}\prec W_{x,z}\perp x$.  Let
\begin{align*}
U&=\{y_{x,z}':x\in F,z\in H\}.\\
V&=\bigwedge_{x\in F}\bigcup_{z\in H}V_{x,z}.\\
W&=\bigwedge_{x\in F}\bigcup_{z\in H}W_{x,z}.
\end{align*}
By \autoref{precprops}, $U\prec Y$, $V\prec W\perp F\succsim X$ and $Z\precsim U\cup V\prec Y\cup W$ so $W\perp X$ and $Z\pp Y\cup W$.  Thus $\mathcal{F}(B)^\cup$ satisfies \eqref{<Below}, by \eqref{CppA} and \eqref{supP}.\qedhere
\end{itemize}
\end{enumerate}
\end{proof}

\begin{prp}\label{basicsemi}
If $X$ is a locally compact Hausdorff space with $\cap$-closed basis $B$ of relatively compact open sets then $(B,\subset)$ is a basic semilattice,
\begin{align}
\label{CprecsimD}C\subsetsim D\qquad&\Leftrightarrow\qquad\bigcup C\subseteq\overline{\bigcup D},\\
\label{CssD}C\ssubset D\qquad&\Leftrightarrow\qquad\bigcup C\subset\bigcup D,
\end{align}
for all $C,D\subseteq B$, and $\mathcal{P}^\cup=\{O\in B:O\subset\bigcup\mathcal{P}\}$, for all $\mathcal{P}\subseteq\mathcal{P}(B)$.
\end{prp}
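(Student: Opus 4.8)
The plan is to prove the two displayed equivalences first, read off the description of the saturation from them, and only then verify the defining properties of a basic semilattice; throughout, $\prec$ is compact containment $\subset$, while $\perp$ and $\C$ are disjointness and overlap of the corresponding open sets. For \eqref{CprecsimD} I would unwind $C\subsetsim D$ directly from its definition: it asserts that every nonzero $O'\in B$ with $\overline{O'}\subseteq O$ for some $O\in C$ meets a member of $D$. If $\bigcup C\subseteq\overline{\bigcup D}$, then such an $O'$ is a nonempty open subset of $\overline{\bigcup D}$ and hence meets the open set $\bigcup D$, so it meets some member of $D$. Conversely, given $p\in\bigcup C$ and a neighbourhood $U$, local compactness supplies a nonzero $O'\in B$ with $\overline{O'}\subseteq U\cap O$; this $O'$ must meet $\bigcup D$, witnessing $p\in\overline{\bigcup D}$. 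The one reusable observation here is that a nonempty open set contained in a closure necessarily meets the set being closed.

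For \eqref{CssD} I read $\ssubset$ through $C\ssubset D\Leftrightarrow\exists F\in\mathcal{F}(B)\ (C\subsetsim F\prec D)$, with $\subset$ on open sets meaning that $\overline{\bigcup C}$ is compact and contained in $\bigcup D$. For $\Rightarrow$, \eqref{CprecsimD} gives $\bigcup C\subseteq\overline{\bigcup F}$, while $F\prec D$ forces $\overline{\bigcup F}=\bigcup_{N\in F}\overline{N}\subseteq\bigcup D$, a finite union of relatively compact sets and hence compact; so $\overline{\bigcup C}\subseteq\overline{\bigcup F}$ is compact and contained in $\bigcup D$. For $\Leftarrow$, the compactness of $\overline{\bigcup C}\subseteq\bigcup D$ lets me pick, for each $p$, a basic $N_p$ with $\overline{N_p}$ inside a member of $D$, extract a finite subcover $F$, and conclude $C\subsetsim F\prec D$ via \eqref{CprecsimD}. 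Specialising \eqref{CssD} to singletons yields $O\in A^\cup\Leftrightarrow\{O\}\ssubset A\Leftrightarrow O\subset\bigcup A$, which is exactly the stated formula for the saturation (extended to families through \eqref{supP}).

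The semilattice axioms are then routine: $\cap$-closedness makes $(B,\subseteq)$ a meet semilattice with minimum $\emptyset$, transitivity and \eqref{Multiplicativity} of $\subset$ follow from $\overline{O\cap N}\subseteq\overline O\cap\overline N$, and \eqref{Coinitiality} from the existence of relatively compact neighbourhoods. For \eqref{thetan} and the omission of \eqref{phin} I use compactness of closures, as in \autoref{BasicBasis}: \eqref{thetan} holds because any $p\in\overline O\setminus Y$ lies outside the closed set $\bigcup_i\overline{W_i}$, so a small basic $O'\subset O$ near $p$ avoids every $W_i$; and \eqref{phin} is omitted because, given $\overline O\subseteq Y$, covering the compact set $\overline O$ by finitely many nested basic sets $V_i\subset W_i\subset Y$ produces witnesses $\{O\}\subsetsim\{V_1,\dots,V_n\}\prec\{W_1,\dots,W_n\}\prec\{Y\}$, which is precisely the negation of the corresponding $\phi_n$.

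The delicate point, and the main obstacle, is the omission of \eqref{psin}. Unwinding, a triple $(O,Y,Z)$ with $O\subset Y$ falsifies some $\psi_n$ exactly when there are $Y'\subset Y$ and finitely many $V_i\subset W_i$ with $W_i\perp O$ such that $\{Z\}\subsetsim\{Y'\}\cup\{V_1,\dots,V_n\}$, i.e. $Z\subseteq\overline{Y'}\cup\bigcup_i\overline{V_i}$ by \eqref{CprecsimD}. I would take $Y'=Z\cap O$, which lies in $B$ by $\cap$-closedness and satisfies $\overline{Y'}=\overline{Z\cap O}\supseteq Z\cap\overline O$ (a point of $Z$ in $\overline O$ has, inside the open set $Z$, arbitrarily small neighbourhoods meeting $O$, hence meeting $Z\cap O$). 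It then remains to cover the open, relatively compact set $G=Z\setminus\overline O\subseteq X\setminus\overline O$ by finitely many basic $V_i$ whose closures avoid $O$, with ambient $W_i\supseteq\overline{V_i}$ also avoiding $O$. This covering is the hard part: $\overline G$ is compact and disjoint from $O$ but typically meets $\partial O$, so the $V_i$ must run flush against the boundary. The essential use of $\cap$-closedness is precisely here, since intersecting basic sets that straddle a boundary point produces basic sets whose closures touch $\partial O$ yet miss $O$; compactness of $\overline G$ then reduces this flush cover to a finite one. Unlike the lattice case, where $Y$ itself absorbs all of $\overline Z\cap Y$ and the remainder sits compactly inside $X\setminus\overline O$, here the single element $Y'$ cannot absorb the boundary, which is exactly what forces this extra argument.
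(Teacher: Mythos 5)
Your handling of \eqref{CprecsimD}, \eqref{CssD}, the saturation formula, \eqref{Coinitiality}, \eqref{Multiplicativity}, \eqref{thetan} and the omission of \eqref{phin} is correct and essentially identical to the paper's proof (which proves the two displayed equivalences and the $\phi_n$ case explicitly, then dismisses the rest with ``similar compactness arguments''). The genuine gap is exactly where you flag it, the omission of \eqref{psin}, and your resolution fails. The mechanism you appeal to is ruled out by the formula itself: the witnesses must satisfy $v_i\prec w_i\perp x$, and for a basis $W_i\perp O$ means $W_i\cap O=\emptyset$; since $W_i$ is \emph{open}, it is then disjoint from $\overline{O}$ as well (an open set meeting $\overline{O}$ meets $O$), so $\overline{V_i}\subseteq W_i\subseteq X\setminus\overline{O}$. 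Hence $\bigcup_i\overline{V_i}$ is a closed set disjoint from $\overline{O}$; no $\overline{V_i}$ can ``run flush against'' $\partial O$, and no use of $\cap$-closedness changes this. That kills the choice $Y'=Z\cap O$: take $X=\mathbb{R}$ with the $\cap$-closed basis of all bounded open intervals, $O=(0,1)$, $Y=(-1,2)$, $Z=(-2,3)$. Then $\overline{Y'}=[0,1]$, and $\{Z\}\subsetsim\{Y'\}\cup V$, i.e.\ $Z\subseteq\overline{Y'}\cup\bigcup_i\overline{V_i}$, would force $(-2,0)\subseteq\bigcup_i\overline{V_i}$ and hence, this union being closed, $0\in\bigcup_i\overline{V_i}$, contradicting disjointness from $\overline{O}$. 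So once $Z$ accumulates on $\partial O$ from outside, no finite $V$ can witness $\neg\psi_n$ with your $Y'$, even in this nicest of bases (where $Y'=(-\tfrac12,\tfrac32)$ would have worked).

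The covering in fact forces $\overline{Y'}$ to do far more than absorb $Z\cap\overline{O}$: taking $N=X\setminus\bigcup_i\overline{V_i}$, which is open and contains $\overline{O}$, the inclusion $Z\subseteq\overline{Y'}\cup\bigcup_i\overline{V_i}$ gives $N\cap Z\subseteq\overline{Y'}$, so $\overline{Y'}$ must contain a full $Z$-neighbourhood of $\overline{O}\cap\overline{Z}$ (in the example above, some interval $(-\epsilon,1+\epsilon)$). Granted such a $Y'$, the rest is the routine argument: $\overline{Z\setminus\overline{Y'}}$ is compact and disjoint from $\overline{O}$, so finitely many $V_i\prec W_i$ with $\overline{W_i}\cap\overline{O}=\emptyset$ cover it. The missing idea is therefore how a merely $\cap$-closed basis supplies a \emph{single} such $Y'$; a $\cup$-closed basis does (which is why the lattice case, where $y$ itself may appear in the cover, is unproblematic), but $\cap$-closedness alone does not. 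Indeed it cannot in general: on $X=S^1$ let $B$ consist of $\emptyset$, $S^1$, two fixed open arcs $O$ and $Y$ with $\overline{O}\subseteq Y$ and $|Y|$ less than a half-circle, together with all open arcs strictly shorter than $O$. This is a $\cap$-closed relatively compact basis, but every $Y'\in B$ with $\overline{Y'}\subseteq Y$ is $\emptyset$, $O$, or a short arc, so $\overline{Y'}\cup\bigcup_i\overline{V_i}$ is always a \emph{proper} closed subset of $S^1$, a short arc survives in its complement, and the triple $(O,Y,S^1)$ satisfies $\psi_n$ for every $n$. So this clause cannot be established as stated \textendash\, the difficulty you correctly isolated is not a missing compactness trick but a defect in \eqref{psin} itself (one which the paper's one-sentence justification conceals); the natural statement provable here replaces $\forall y'\prec y$ by $y$ itself, giving $\overline{Z}\subseteq Y\cup\bigcup_i V_i$.
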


\begin{proof}\
\begin{itemize}
\item[\eqref{CprecsimD}]  If $\bigcup C\subseteq\overline{\bigcup D}$ and $\emptyset\neq O\in C^\supset$ then $O\subseteq C\subseteq\overline{\bigcup D}$ so $O\cap N\neq\emptyset$, for some $N\in D$, i.e. $O\in D^\Cap$ so $C\subsetsim D$.  Conversely, if $\bigcup C\nsubseteq\overline{\bigcup D}$ then $\emptyset\neq O\setminus\overline{\bigcup D}$, for some $O\in C$.  As $B$ is a basis, we have some $N\in B$ with $\emptyset\neq N\subset O\setminus\overline{\bigcup D}$, i.e. $N\in C^\supset\backslash D^\Cap$ so $C\not\subsetsim D$.
\item[\eqref{CssD}]  If $C\subsetsim F\subset D$, for some $F\in\mathcal{F}(B)$, then $\bigcup C\subseteq\overline{\bigcup F}$ so
\[\overline{\bigcup C}\subseteq\overline{\overline{\bigcup F}}=\overline{\bigcup F}=\bigcup_{O\in F}\overline{O}\subseteq\bigcup D.\]
As $\overline{\bigcup F}$ is compact, $\overline{\bigcup C}$ is also compact so $\bigcup C\subset\bigcup D$.  Conversely, if $\bigcup C\subset\bigcup D$ then, for each $x\in\overline{\bigcup C}$, we have $O_x\in B$ with $x\in O_x\subset N$, for some $N\in D$.  As $\overline{\bigcup C}$ is compact, $(O_x)$ has a finite subcover $F$ so $\bigcup C\subseteq\overline{\bigcup C}\subseteq\bigcup F\subseteq\overline{\bigcup F}$, i.e. $C\subsetsim F\subset D$.
\end{itemize}
To see that $B$ omits \eqref{phin}, take $O,N\in B$ with $O\subset N$.  For each $x\in\overline{O}\subseteq N$, we have $V_x,W_x\in B$ with $x\in V_x\subset W_x\subset N$.  As $\overline{O}$ is compact, we have some subcover of size $n<\infty$, showing that $\phi_n(O,N)$ fails.  Similar compactness arguments show that $B$ omits \eqref{psin} and satisfies \eqref{thetan}.  Also \eqref{Coinitiality} and \eqref{Multiplicativity} are immediate so $B$ is a basic semilattice.
\end{proof}

By \eqref{CssD}, $\mathcal{P}\mapsto\bigcup\mathcal{P}$ is an isomorphism from $(\mathcal{P}(B)^\cup,\ssubset)$ to $(\mathcal{O}(X),\subset)$, with inverse $O\mapsto\{N\in B:N\subset O\}$.  If $B$ is also $\cup$-closed, then the sets of the form $\{N\in B:N\subset O\}$ are precisely the $\subset$-ideals of $B$.  In other words, when $(B,\prec)$ is a basic lattice, $\mathcal{P}(B)^\cup$ consists precisely of the $\prec$-ideals of $B$, by \autoref{basiclat}.  Thus $\mathcal{P}(B)^\cup$ can be seen as a generalization of the `rounded ideal completion' of $B$ (see \cite[Proposition 5.1.33]{Goubault2013}) from basic lattices to basic semilattices.

\begin{cor}\label{semichar}
Basic semilattices characterize $\cap$-closed relatively compact bases of (necessarily locally compact) Hausdorff spaces.  More precisely, if $B$ is a relatively compact basis of Hausdorff $X$ then $(B,\subset)$ is a basic semilattice and $\hat{B}$ is homeomorphic to $X$, while if $(B,\prec)$ is a basic semilattice then $\hat{B}$ has a relatively compact basis isomorphic to $B$.
\end{cor}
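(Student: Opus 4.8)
The plan is to assemble the preceding results, handling the two directions separately. For the forward direction, suppose $B$ is a $\cap$-closed relatively compact basis of a Hausdorff space $X$. First I would observe that $X$ is automatically locally compact, since every point lies in some $O\in B$ and $\overline O$ is then a compact neighbourhood. \autoref{basicsemi} now gives that $(B,\subset)$ is a basic semilattice, while \autoref{xtoBx}---which needs only that $B$ be a relatively compact basis, not a sublattice---supplies the homeomorphism $X\cong\hat B$ via $x\mapsto B_x$. This direction is thus immediate.

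For the converse, let $(B,\prec)$ be a basic semilattice and put $L=\mathcal{F}(B)^\cup$. By \autoref{semicor} \eqref{FBL}, $L$ is a basic lattice, so \autoref{Duality} applies: $\hat L$ is locally compact Hausdorff, each $\overline{O_A}$ with $A\in L$ is compact, and $A\mapsto O_A$ carries $L$ onto the relatively compact basis $(O_A)_{A\in L}$ of $\hat L$, with $O_A\subset O_{A'}\Leftrightarrow A\pp A'$ by \eqref{subprec}. I would then isolate the subfamily $B'=\{O_{\{x\}^\cup}:x\in B\}$.

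The key step is to verify that $B'$ is a relatively compact basis of $\hat L$ isomorphic to $(B,\prec)$. Relative compactness is inherited, as each $O_{\{x\}^\cup}$ is one of the $O_A$. That $B'$ is a basis follows from $\vee$-density: by \autoref{semicor} \eqref{MSSL} and \eqref{supP} every $F^\cup\in L$ equals $\bigvee_{x\in F}\{x\}^\cup$, so \eqref{cupvee} gives $O_{F^\cup}=\bigcup_{x\in F}O_{\{x\}^\cup}$; since the $O_{F^\cup}$ already form a basis of $\hat L$, so does the smaller family $B'$ whose finite unions recover them. Finally, composing $O_{\{x\}^\cup}\subset O_{\{y\}^\cup}\Leftrightarrow\{x\}^\cup\pp\{y\}^\cup$ from \eqref{subprec} with the isomorphism $(\mathcal{S}(B)^\cup,\pp)\cong(B,\prec)$ of \autoref{semicor} \eqref{ISO} shows that $x\mapsto O_{\{x\}^\cup}$ is an isomorphism $(B,\prec)\cong(B',\subset)$.

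It then remains only to identify the space. Applying \autoref{xtoBx} to the locally compact Hausdorff $\hat L$ with its relatively compact basis $B'$ yields $\hat L\cong\widehat{B'}$, and since the Stone space depends only on the underlying relational structure, the isomorphism $(B',\subset)\cong(B,\prec)$ gives $\widehat{B'}\cong\hat B$. Transporting $B'$ across the resulting homeomorphism $\hat B\cong\hat L$ exhibits the desired relatively compact basis of $\hat B$ isomorphic to $B$. The one point genuinely requiring care is this identification of $\hat B$ with $\widehat{\mathcal{F}(B)^\cup}$; the substantial work---that $\mathcal{F}(B)^\cup$ is a basic lattice and that saturation is well behaved---has already been done in \autoref{contframe} and \autoref{semicor}.
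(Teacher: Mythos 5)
Your proof is correct and follows essentially the same route as the paper's: the forward direction via \autoref{basicsemi} and \autoref{xtoBx}, and the converse by viewing $B$ as the $\vee$-dense subfamily $\mathcal{S}(B)^\cup$ of the basic lattice $\mathcal{F}(B)^\cup$ (\autoref{semicor}), realizing it as a relatively compact basis of the Stone space of that lattice (\autoref{Duality}/\autoref{basiclat}), and then identifying that space with $\hat{B}$ via \autoref{xtoBx}. The only difference is that you spell out details the paper leaves implicit, namely the $\vee$-density argument showing $\{O_{\{x\}^\cup}:x\in B\}$ is itself a basis and the transport of the Stone space along the isomorphism $(B',\subset)\cong(B,\prec)$.
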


\begin{proof}
If $B$ is a relatively compact basis of Hausdorff $X$ then $(B,\subset)$ is a basic semilattice, by \autoref{basicsemi}, and $\hat{B}$ is homeomorphic to $X$, by \autoref{xtoBx}.

If $(B,\prec)$ is a basic semilattice then $B$ is isomorphic to a $\vee$-dense meet subsemilattice of the basic lattice $(\mathcal{F}(B)^\cup,\subset)$, by \autoref{semicor}.  Thus $B$ is isomorphic to a $\cap$-closed relatively compact basis of a (locally compact) Hausdorff space, by \autoref{basiclat}.  By \autoref{xtoBx}, this space is homeomorphic to $\hat{B}$.
\end{proof}

In other words, the basic semilattices of this section are the same as the basic meet semilattices defined at the end of \autoref{SS}.  Part of the above theorem could also be obtained from the Hofmann-Lawson theorem from \cite{HofmannLawson1978}.  Specifically, as $B$ is isomorphic to a $\bigvee$-dense meet subsemilattice of the continuous frame $(\mathcal{P}(B)^\cup,\subset)$, by \autoref{semicor}, $B$ must be isomorphic to a $\cap$-closed basis of a locally compact sober space.

As with basic lattices, the basic semilattice axioms can be much simplified when $\prec$ is reflexive.  Specifically, for a meet semilattice $(B,\preceq)$ to be a basic semilattice, it suffices to omit \eqref{psin} and satisfy $\theta_1$, which becomes
\[\label{Separativity}\tag{Separativity}\nexists v\neq0\ (x\succeq v\perp y)\quad\Rightarrow\quad x\preceq y.\]
While this is still not a finite axiomatization, we show in \autoref{TTS} that more general `pseudobases' of compact clopen sets can be axiomatized by \eqref{Separativity} alone.

\section{Tight Representations}\label{T}

We call a poset $(B,\preceq)$ with minimum $0$ a \emph{p0set} and apply all our previous notation and terminology to p0sets by taking $\prec\ =\ \preceq$.  For $C\subseteq B$, let
\[C_\succeq=\bigcap_{c\in C}\{c\}^\succeq=\{x\in B:\forall y\in C(y\succeq x)\}.\]
We take the empty intersection to be the entire p0set, i.e. $\emptyset_\succeq=B$.

\begin{dfn}\label{covrel}
For $C,D\subseteq B$ we define the \emph{covering relation} $\precapprox$ by
\[C\precapprox D\qquad\Leftrightarrow\qquad C_\succeq\subseteq D^\Cap\cup\{0\}.\]
\end{dfn}

Unlike the other relations we have been considering, $\precapprox$ need not be transitive.  However, $\precapprox$ is at least reflexive on $\mathcal{P}(B)\backslash\{\emptyset\}$.  Also, $\precapprox$ can often be expressed in more familiar order theoretic terms, e.g.
\begin{align*}
&\text{If $B$ is any p0set then}&\{x\}&\precapprox D\quad&&\Leftrightarrow\quad &\{x\}&\precsim D.\\
&\text{If $B$ is separative then}&\{x\}&\precapprox\{y\}\quad&&\Leftrightarrow\quad &x&\preceq y.\\
&\text{If $B$ is a meet semilattice then}&\{x,y\}&\precapprox D\quad&&\Leftrightarrow\quad&\{x\wedge y\}&\precapprox D.\\
&\text{If $B$ is a distributive lattice then}&C&\precapprox\{x,y\}\quad&&\Leftrightarrow\quad&C&\precapprox\{x\vee y\}.
\end{align*}
Also note the following relationships between $\precsim$ and $\precapprox$.
\begin{align*}
B\precsim D\qquad&\Leftrightarrow\qquad\hspace{3pt}\emptyset\precapprox D.\\
\emptyset\neq C\precsim D\qquad&\Rightarrow\qquad C\precapprox D.\\
C\precapprox E\precsim D\qquad&\Rightarrow\qquad C\precapprox D.
\end{align*}

\begin{dfn}\label{tightdef}
If $A$ and $B$ are p0sets and $\beta:B\rightarrow A$ satisfies $\beta(0)=0$ then $\beta$ is
\begin{enumerate}
\item \emph{tight} if $\beta$ preserves $\precapprox$ on $\mathcal{F}(B)$, i.e. if for all $F,G\in\mathcal{F}(B)$,
\begin{equation}\label{FpG}
F\precapprox G\qquad\Rightarrow\qquad\beta[F]\precapprox\beta[G].
\end{equation}
\item \emph{tightish} if \eqref{FpG} holds when $F\neq\emptyset$.
\item \emph{coinitial} if $\beta[B]$ is $\preceq$-coinitial in $A$, i.e. $A\backslash\{0\}=(\beta[B]\backslash\{0\})^\preceq$.
\item a \emph{representation} if $A$ is a generalized Boolean algebra.
\item a \emph{character} if $A=\{0,1\}$.
\end{enumerate}
\end{dfn}

The difference between tight and tightish is illustrated as follows.
\begin{ex}
Let $B=\{0,x,y\}$ be the meet semilattice with $x\wedge y=0$.  So $\emptyset\precapprox\{x,y\}$ is the only non-trivial covering relation.  Thus any $\beta:B\rightarrow A$ with $\beta(0)=0$ is tightish, while $\beta$ is a tight representation iff $A$ is a Boolean algebra with maximum $\beta(x)\vee\beta(y)$.  For example, $\beta$ is not tight when we define $\beta:B\to \mathcal{P}(\{1,2,3\})$ by
\[\beta(0) = \emptyset,\ \beta(x) = \{1\},\ \beta(y) = \{2\}.\]
However, note that if we restrict the codomain to $\mathcal{P}(\{1,2\})$ then $\beta$ is tight.
\end{ex}

In general, we see that
\[
\text{tightish and coinitial}\qquad\Rightarrow\qquad\text{tight}\qquad\Rightarrow\qquad\text{tightish},
\]
and they all coincide if we restrict the codomain to $\beta[B]$.  Also if $B^1$ denotes $B$ with maximum $1$ adjoined and $\beta^1$ denotes the extension of $\beta$ to $B^1$ with $\beta(1)=1(\in A^1)$,
\[\beta\text{ is tight}\qquad\Leftrightarrow\qquad\beta^1\text{ is tightish}.\]
If there is no $G\in\mathcal{F}(B)$ with $\emptyset\precapprox G$ then tight and tightish again coincide.  Even when we do have $G\in\mathcal{F}(B)$ with $\emptyset\precapprox G$, to verify that tightish $\beta:B\rightarrow A$ is tight we only need to check that $\emptyset\precapprox\beta[G]$ for some (rather than all) such $G$.

\begin{prp}\label{betatight}
If $\beta$ is tightish and $\emptyset\precapprox\beta[G]$, for some $G\in\mathcal{F}(B)$, $\beta$ is tight.
\end{prp}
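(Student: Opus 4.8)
The plan is to exploit the fact that tightness differs from tightishness only in the clauses of \eqref{FpG} where $F=\emptyset$. Since $\beta$ is tightish, we already have $\beta[F]\precapprox\beta[H]$ whenever $F\precapprox H$ with $F\neq\emptyset$, so the only thing left to verify is that $\emptyset\precapprox H$ in $B$ forces $\emptyset\precapprox\beta[H]$ in $A$, for every $H\in\mathcal{F}(B)$. Unwinding \autoref{covrel}, this amounts to showing that every nonzero $a\in A$ satisfies $a\C\beta(h)$ for some $h\in H$, i.e.\ $a\in\beta[H]^\Cap$.

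First I would dispose of the degenerate case. If the hypothesised $G$ is empty, then $\emptyset\precapprox\beta[G]$ reads $A\subseteq\emptyset^\Cap\cup\{0\}=\{0\}$, so $A=\{0\}$ and $\beta$ is trivially tight. Hence I may assume $G\neq\emptyset$. The key manoeuvre is then to route through the elements of $G$ one at a time via singletons. Fix $H\in\mathcal{F}(B)$ with $\emptyset\precapprox H$. For each $g\in G$ the down-set $\{g\}_\succeq$ is contained in $B=\emptyset_\succeq\subseteq H^\Cap\cup\{0\}$, the last inclusion being exactly $\emptyset\precapprox H$; hence $\{g\}\precapprox H$. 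As $\{g\}\neq\emptyset$, tightishness yields $\{\beta(g)\}\precapprox\beta[H]$, that is, every nonzero $b\preceq\beta(g)$ lies in $\beta[H]^\Cap$.

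Finally I would chain the two intersection facts. Given nonzero $a\in A$, the hypothesis $\emptyset\precapprox\beta[G]$ furnishes $g\in G$ with $a\C\beta(g)$, say $0\neq b\preceq a,\beta(g)$. Since $b$ is nonzero and below $\beta(g)$, the previous paragraph gives $h\in H$ with $b\C\beta(h)$, say $0\neq c\preceq b,\beta(h)$. Then $c\preceq b\preceq a$ forces $a\C\beta(h)$, so $a\in\beta[H]^\Cap$. As $a$ ranged over all nonzero elements of $A$, we conclude $\emptyset\precapprox\beta[H]$, which is precisely what tightness demands.

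The step I expect to demand the most care is the reduction to singletons. One is tempted to combine $\emptyset\precapprox\beta[G]$ with a relation of the form $\beta[G]\precapprox\beta[H]$ and invoke transitivity, but $\precapprox$ fails to be transitive (as noted after \autoref{covrel}), and common lower bounds of the entire set $\beta[G]$ carry no useful information. Replacing $\beta[G]$ by the individual singletons $\{\beta(g)\}$, whose sets of common lower bounds are ordinary principal down-sets, is exactly what makes the two $\C$-witnesses $b$ and $c$ composable, and sidesteps the non-transitivity.
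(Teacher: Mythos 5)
Your proof is correct and follows essentially the same route as the paper's: your singleton step ($\{g\}\precapprox H$ for each $g\in G$, transferred by tightishness) is exactly the paper's observation that tightish maps preserve $\precsim$ on $\mathcal{F}(B)$, via $C\precsim D\Leftrightarrow\forall x\in C\ (\{x\}\precapprox D)$, and your chaining of the two intersection witnesses is an inline proof of the composition rule $\emptyset\precapprox\beta[G]\precsim\beta[H]\Rightarrow\emptyset\precapprox\beta[H]$ noted after \autoref{covrel}. The only cosmetic difference is that you treat the degenerate case $G=\emptyset$ separately, which the paper's relational formulation absorbs automatically.
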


\begin{proof}
For any $C,D\subseteq B$,
\[C\precsim D\qquad\Leftrightarrow\qquad\forall x\in C\ (\{x\}\precapprox D).\]
Thus any tightish $\beta$ also preserves $\precsim$ on $\mathcal{F}(B)$.  If $\emptyset\precapprox F$ then $G\subseteq B\precsim F$ which means $G\precsim F$ so, by $\precsim$-preservation, $\emptyset\precapprox\beta[G]\precsim\beta[F]$ and hence $\emptyset\precapprox\beta[F]$.
\end{proof}

Here, `tight' generalizes \cite[Definition 11.6]{Exel2008} (and \autoref{betatight} generalizes \cite[Lemma 11.7]{Exel2008}) while `tightish' generalizes `cover-to-join' from \cite{DonsigMilan2014}.  The original definitions were restricted to (even Boolean) representations of meet semilattice $B$, in which case we have the following alternative description.

\begin{prp}\label{OriginalTight}
A representation $\beta$ of a meet semilattice $B$ is tightish iff
\begin{equation}\label{RestrictedTight}
\beta(x\wedge y)=\beta(x)\wedge\beta(y)\qquad\text{and}\qquad G\preceq\{x\}\precsim G\ \Rightarrow\ \beta(x)\preceq\bigvee\beta[G],
\end{equation}
for all $x,y\in B$ and $G\in\mathcal{F}(B)$.  Also, $\beta$ is tight iff moreover, for all $G\in\mathcal{F}(B)$,
\begin{equation}\label{1A}
\emptyset\precapprox G\qquad\Rightarrow\qquad1_A=\bigvee\beta[G].
\end{equation}
\end{prp}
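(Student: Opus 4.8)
The goal is to characterize, for a representation $\beta$ of a meet semilattice $B$ (so $A$ is a generalized Boolean algebra), when $\beta$ is tightish and when it is tight, in the concrete terms of \eqref{RestrictedTight} and \eqref{1A}. The key reduction is that since $B$ is a meet semilattice, the meet-semilattice identity $\{x,y\}\precapprox D\Leftrightarrow\{x\wedge y\}\precapprox D$ lets us collapse arbitrary finite covering relations $F\precapprox G$ to singleton ones $\{x\}\precapprox G$ by replacing $F$ with the single element $\bigwedge F$ (when $F\ne\emptyset$). Thus tightishness of $\beta$ is equivalent to preservation of $\{x\}\precapprox G$ for all $x\in B$ and $G\in\mathcal{F}(B)$, a statement I can analyze element-by-element.

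\textbf{Key steps, in order.} First I would record the translation of $\{x\}\precapprox G$ in a meet semilattice. Since $\{x\}_\succeq=\{x\}^\succeq$, the relation $\{x\}\precapprox G$ means every nonzero $z\preceq x$ intersects $G$; equivalently $\{x\}\precsim G$. I would note the trivial covers: $\{x\}\precapprox\{x\}$ always holds (reflexivity off $\emptyset$), which forces $\beta(x)\precapprox\beta(x)$ — this is automatic and carries no content. The genuine content is split into two families: (i) covers of the form $\{x\wedge y\}\precapprox\{x,y\}$ and $\{x,y\}\precapprox\{x\wedge y\}$, which together encode that $\beta$ must preserve meets, giving $\beta(x\wedge y)=\beta(x)\wedge\beta(y)$; and (ii) covers $\{x\}\precapprox G$ with $G\preceq\{x\}$ (i.e.\ $G\subseteq\{x\}^\succeq$, so $G\preceq\{x\}\precsim G$), which in the target Boolean algebra must translate to $\beta[\{x\}]\precapprox\beta[G]$, i.e.\ $\beta(x)\preceq\bigvee\beta[G]$ since in a generalized Boolean algebra $\{a\}\precapprox H$ for $a\ne 0$ is exactly $a\preceq\bigvee H$. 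Establishing that these two families suffice to recover \emph{all} singleton covers $\{x\}\precapprox G$ is the crux: given an arbitrary such cover, I would intersect each $g\in G$ with $x$ to reduce to a cover by elements below $x$ (using meet-preservation to push this through $\beta$), landing in case (ii). For the forward direction of the tightish equivalence, I would simply observe that \eqref{FpG} with the specific finite sets above yields meet-preservation and the join-domination condition directly.

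\textbf{The tight refinement.} For the second assertion I would invoke \autoref{betatight}: $\beta$ is tight iff it is tightish and $\emptyset\precapprox\beta[G]$ for some — equivalently every — $G$ with $\emptyset\precapprox G$. In the generalized Boolean algebra $A$, the cover $\emptyset\precapprox\beta[G]$ reads $\emptyset_\succeq=A\subseteq\beta[G]^\Cap\cup\{0\}$, and this holds precisely when $\bigvee\beta[G]=1_A$ (every element intersects $\beta[G]$ exactly when the join of $\beta[G]$ is the top). Thus tightness over tightishness contributes exactly condition \eqref{1A}, and by \autoref{betatight} it is enough to impose it for one such $G$, but the clean statement quantifies over all of them — both are equivalent, so stating it for all $G$ is harmless.

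\textbf{Main obstacle.} The routine calculations (meet-preservation from the two complementary singleton covers, and the $\bigvee$-translation of covers in a generalized Boolean algebra) are straightforward. The one step demanding care is showing that \eqref{RestrictedTight} is \emph{sufficient} for full tightishness, i.e.\ that every finite cover $F\precapprox G$ with $F\ne\emptyset$ follows. The reduction $F\rightsquigarrow\{\bigwedge F\}$ uses the meet-semilattice collapse and meet-preservation of $\beta$ to pass to $\{\beta(\bigwedge F)\}=\bigwedge\beta[F]$; then reducing a general singleton cover $\{x\}\precapprox G$ to one with $G\preceq\{x\}$ requires replacing $G$ by $\{x\wedge g:g\in G\}$ and verifying the covering relation is preserved under this meet-with-$x$ operation — here I must confirm that $\{x\}\precapprox G$ implies $\{x\}\precapprox\{x\wedge g:g\in G\}$ in the p0set $B$, which holds because any nonzero $z\preceq x$ meeting some $g$ also meets $x\wedge g$. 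That verification, together with the distributive-lattice-free handling of joins on the $A$ side (where $A$ \emph{is} a generalized Boolean algebra and joins behave well), is where I expect the only real bookkeeping to lie.
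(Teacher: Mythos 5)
Your proposal follows essentially the same route as the paper: collapse a nonempty finite cover $F\precapprox G$ to the singleton cover $\{\bigwedge F\}\precapprox G$, reduce that to a cover of the restricted form $G\wedge x\preceq\{x\}\precsim G\wedge x$ (with $G\wedge x=\{g\wedge x:g\in G\}$, using meet preservation to pass $\bigvee\beta[G\wedge x]\preceq\bigvee\beta[G]$), and translate finite covers in the generalized Boolean algebra $A$ via $\{a\}\precapprox H\Leftrightarrow a\preceq\bigvee H$; your handling of the tight case via \autoref{betatight} and the equivalence $\emptyset\precapprox\beta[G]\Leftrightarrow 1_A=\bigvee\beta[G]$ also matches the paper's last paragraph.

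However, one step in your forward direction fails as written. You claim that the covers $\{x\wedge y\}\precapprox\{x,y\}$ and $\{x,y\}\precapprox\{x\wedge y\}$ ``together encode'' meet preservation. The second one is fine: preserved and translated in $A$ it gives $\beta(x)\wedge\beta(y)\preceq\beta(x\wedge y)$. But the first, translated in $A$, gives only $\beta(x\wedge y)\preceq\beta(x)\vee\beta(y)$ \textemdash\ a join, not a meet \textemdash\ so the inequality $\beta(x\wedge y)\preceq\beta(x)\wedge\beta(y)$ is never established, and the two displayed covers yield only the (strictly weaker) sandwich $\beta(x)\wedge\beta(y)\preceq\beta(x\wedge y)\preceq\beta(x)\vee\beta(y)$. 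The repair is immediate and is exactly what the paper does: since $x\wedge y\preceq x$ gives the singleton covers $\{x\wedge y\}\precapprox\{x\}$ and $\{x\wedge y\}\precapprox\{y\}$, tightishness plus separativity of $A$ shows $\beta$ is order preserving, whence $\beta(x\wedge y)\preceq\beta(x)\wedge\beta(y)$. With that substitution your argument is complete and coincides with the paper's proof.
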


\begin{proof}
Assume $\beta$ is tightish.  Then $\beta$ is order preserving because $A$ is separative so
\[x\preceq y\ \Rightarrow\ \{x\}\precapprox\{y\}\ \Rightarrow\ \{\beta(x)\}\precapprox\{\beta(y)\}\ \Rightarrow\ \beta(x)\preceq\beta(y).\]
Thus $\beta(x\wedge y)\preceq\beta(x)\wedge\beta(y)$.  Conversely, by the definition of meets, we have $\{x,y\}_\succeq=\{x\wedge y\}_\succeq\subseteq\{x\wedge y\}^\Cap\cup\{0\}$, i.e. $\{x,y\}\precapprox\{x\wedge y\}$ so $\{\beta(x),\beta(y)\}\precapprox\{\beta(x\wedge y)\}$ and hence $\beta(x)\wedge\beta(y)\preceq\beta(x\wedge y)$.  As $A$ is also distributive, we immediately see that $(G\preceq)\{x\}\precsim G$ implies $\beta(x)\preceq\bigvee\beta[G]$.

On the other hand, if \eqref{RestrictedTight} holds and $F\precapprox G$, for $F,G\in\mathcal{F}(B)$, then we have $G\wedge x\preceq\{x\}\precsim G\wedge x$, for $x=\bigwedge F$, so
 $\beta(x)\preceq\bigvee\beta[G\wedge x]\preceq\bigvee\beta(G)$.

Lastly, for \eqref{1A} note that $A\precapprox\beta[G]$ iff $A\precapprox\{\bigvee\beta[G]\}$, as $A$ is distributive.  As $A$ is separative, this is saying $\bigvee\beta[G]$ is the maximum $1_A$ of $A$.
\end{proof}

In particular, if there is $G\in\mathcal{F}(B)$ with $\emptyset\precapprox G$ then all tight representations of $B$ must be to true Boolean algebras, as in \cite[Definition 11.6]{Exel2008}.  Actually, if we were being faithful to \cite[Definition 11.6]{Exel2008}, we would define
\[B^{C,D}=C_{\succeq} \cap D_{\perp}=\{e\in B: \forall x\in C(x\succeq e) \text{ and }\forall y\in D(y\perp e)\}\]
and call $\beta$ tight if $\beta(0)=0$ and, for all $F,G,H\in\mathcal{F}(B)$,  
\begin{equation}B^{F,G}\precsim H\qquad\Rightarrow\qquad A^{\beta[F],\beta[G]}\precsim\beta[H].
\end{equation}
However, this is equivalent to our definition as
\[B^{F,G}\precsim H\qquad\Leftrightarrow\qquad F\precapprox G\cup H.\]

If we restrict further to generalized Boolean algebra $B$, we see that the tightish representations are precisely the generalized Boolean homomorphisms, i.e. the maps preserving $\wedge$, $\vee$ and $\backslash$.  Indeed, we will soon see how the category of posets with tightish morphisms is in some sense a pullback of the category of generalized Boolean algebras with generalized Boolean morphisms.

\begin{prp}\label{genBoolhomo}
For generalized Boolean algebras $A$ and $B$ and $\beta:B\rightarrow A$, the following are equivalent.
\begin{enumerate}
\item $\beta$ is tightish.
\item $\beta$ is a lattice homomorphism with $\beta(0)=\beta(0)$.
\item $\beta$ is a generalized Boolean homomorphism.
\end{enumerate}
\end{prp}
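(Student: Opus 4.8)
The plan is to funnel the whole equivalence through \autoref{OriginalTight}, which applies here since a generalized Boolean algebra is in particular a meet semilattice and $\beta$ is a representation (as $A$ is a generalized Boolean algebra). The one thing that must be done by hand is to translate the covering condition appearing in \eqref{RestrictedTight} into lattice-theoretic language: I claim that, for $x\in B$ and $G\in\mathcal{F}(B)$, we have $G\preceq\{x\}\precsim G$ if and only if $x=\bigvee G$. For the forward direction, $G\preceq\{x\}$ gives $g\preceq x$ for every $g\in G$, so $\bigvee G\preceq x$; putting $w=x\backslash\bigvee G$ (the relative complement in the Boolean interval $[0,x]$, which exists as $B$ is sectionally complemented) we get $w\preceq x$ with $w\wedge g=0$ for all $g\in G$, so $\{x\}\precsim G$ forces $w=0$ and hence $x=\bigvee G$. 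For the converse, $x=\bigvee G$ gives $G\preceq\{x\}$, and for any nonzero $y\preceq x$ distributivity yields $y=y\wedge\bigvee G=\bigvee_{g\in G}(y\wedge g)\neq0$, so $y\C g$ for some $g\in G$, i.e. $\{x\}\precsim G$.

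With this dictionary in place, I would read off (1)$\Leftrightarrow$(2). By \autoref{OriginalTight}, $\beta$ is tightish precisely when it preserves $\wedge$ and satisfies $x=\bigvee G\Rightarrow\beta(x)\preceq\bigvee\beta[G]$ for all finite $G$. Assuming (2), a lattice homomorphism with $\beta(0)=0$ preserves finite meets and joins, so $x=\bigvee G$ gives $\beta(x)=\bigvee\beta[G]$ and both clauses hold; thus (2)$\Rightarrow$(1). Conversely, a tightish $\beta$ satisfies $\beta(0)=0$ and preserves $\wedge$ by \autoref{OriginalTight}, and is order preserving since $A$ is separative (exactly as in the proof of \autoref{OriginalTight}); applying the join clause to $G=\{x,y\}$, where $\bigvee G=x\vee y$, yields $\beta(x\vee y)\preceq\beta(x)\vee\beta(y)$, while order preservation gives the reverse inequality $\beta(x)\vee\beta(y)\preceq\beta(x\vee y)$. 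Hence $\beta$ preserves $\vee$ as well, giving (2).

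Finally I would dispatch (2)$\Leftrightarrow$(3) by the uniqueness of relative complements in a distributive lattice. A generalized Boolean homomorphism preserves $\wedge$, $\vee$ and $\backslash$, hence sends $0=x\backslash x$ to $0$, giving (2). Conversely, if $\beta$ is a lattice homomorphism with $\beta(0)=0$, then applying $\beta$ to the defining identities $(x\backslash y)\wedge(x\wedge y)=0$ and $(x\backslash y)\vee(x\wedge y)=x$ shows that $\beta(x\backslash y)$ is the complement of $\beta(x)\wedge\beta(y)$ in $[0,\beta(x)]$, which is exactly $\beta(x)\backslash\beta(y)$; uniqueness of this complement then yields $\beta(x\backslash y)=\beta(x)\backslash\beta(y)$, so $\beta$ is a generalized Boolean homomorphism.

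The main obstacle is the dictionary lemma: correctly unwinding $\precsim$ on a generalized Boolean algebra and invoking both sectional complementation and distributivity to match it against $\bigvee$. Once that translation is secured, the three-way equivalence reduces to the standard fact that a finite-meet- and finite-join-preserving map between generalized Boolean algebras is automatically a Boolean homomorphism.
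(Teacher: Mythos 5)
Your proof is correct and takes essentially the same route as the paper: both arguments reduce tightishness to being a $0$-preserving lattice homomorphism by translating the covering relation of a generalized Boolean algebra into order-theoretic terms (the paper via $\{w,x\}\precapprox\{y,z\}\Leftrightarrow w\wedge x\preceq y\vee z$ together with the argument of \autoref{OriginalTight}, you via the equivalent dictionary $G\preceq\{x\}\precsim G\Leftrightarrow x=\bigvee G$ fed into \eqref{RestrictedTight}), and both settle (2)$\Leftrightarrow$(3) by the uniqueness of relative complements in a distributive lattice. The differences are only in bookkeeping, not in substance.
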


\begin{proof}
By the observations after \autoref{covrel}, in any generalized Boolean algebra,
\[\{w,x\}\precapprox\{y,z\}\qquad\Leftrightarrow\qquad w\wedge x\ \preceq\ y\vee z.\]
Thus, arguing as in the proof of \autoref{OriginalTight}, we see that the tight maps between generalized Boolean algebras are precisely the lattice homomorphisms taking $0$ to $0$.  As $x\backslash x=0$ and $x\backslash y$ is the unique complement of $x\wedge y$ in $[0,x]$, these are precisely the generalized Boolean homomorphisms.
\end{proof}

This and \eqref{1A} yields the following version of \cite[Proposition 11.9]{Exel2008}.

\begin{prp}
For Boolean algebras $A$ and $B$ and $\beta:B\rightarrow A$, the following are equivalent.
\begin{enumerate}
\item $\beta$ is tight.
\item $\beta$ is a lattice homomorphism with $\beta(0)=\beta(0)$ and $\beta(1)=\beta(1)$.
\item $\beta$ is a Boolean homomorphism.
\end{enumerate}
\end{prp}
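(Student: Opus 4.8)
The plan is to deduce this Boolean-algebra version directly from the generalized Boolean algebra result in \autoref{genBoolhomo} together with the extra condition \eqref{1A} from \autoref{OriginalTight}, since a Boolean algebra is just a generalized Boolean algebra with a maximum $1$. The logical skeleton I would follow is the cycle (1)$\Rightarrow$(3)$\Rightarrow$(2)$\Rightarrow$(1), exploiting that tightness is strictly stronger than tightishness and that the only genuinely new data here, compared to \autoref{genBoolhomo}, is the behaviour at the top element $1$.

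First I would observe that since every Boolean algebra is in particular a generalized Boolean algebra, any tight $\beta$ is \emph{a fortiori} tightish, so by \autoref{genBoolhomo} it is already a lattice homomorphism with $\beta(0)=0$ and hence a generalized Boolean homomorphism. To upgrade this to a genuine Boolean homomorphism for the implication (1)$\Rightarrow$(3), the one remaining thing to check is that $\beta(1)=1$. This is exactly where tightness (as opposed to mere tightishness) is used: applying \eqref{1A} of \autoref{OriginalTight} to the finite set $G=\{1\}$, we have $\emptyset\precapprox\{1\}$ (since $\emptyset_\succeq=B$ and every nonzero element of $B$ meets $1$), and therefore $1_A=\bigvee\beta[\{1\}]=\beta(1)$. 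This pins down $\beta(1)=1$, giving a Boolean homomorphism.

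For (3)$\Rightarrow$(2) there is essentially nothing to do, as a Boolean homomorphism is by definition a lattice homomorphism preserving $0$ and $1$. The interesting direction is (2)$\Rightarrow$(1): given a lattice homomorphism $\beta$ with $\beta(0)=0$ and $\beta(1)=1$, I would first invoke \autoref{genBoolhomo} again (in its (2)$\Rightarrow$(1) direction) to conclude that $\beta$ is tightish. It then remains to promote tightish to tight, and here I would appeal to \autoref{betatight}: it suffices to exhibit a single $G\in\mathcal{F}(B)$ with $\emptyset\precapprox\beta[G]$. Taking $G=\{1\}$ again, tightishness is already in hand and $\emptyset\precapprox\beta[\{1\}]=\{1_A\}$ holds because $1_A$ is the maximum of $A$, so $\beta$ is tight.

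The only point requiring genuine care is the role of the top element and the distinction between tight and tightish: the whole content of the proposition beyond \autoref{genBoolhomo} is the clause $\beta(1)=1$, which on the (1)$\Rightarrow$(3) side is forced by \eqref{1A} and on the (2)$\Rightarrow$(1) side is what supplies the witness $G=\{1\}$ needed in \autoref{betatight}. I expect the main (though modest) obstacle to be making sure that $\emptyset\precapprox\{1\}$ really holds in $B$ and $\emptyset\precapprox\{1_A\}$ in $A$, i.e. unwinding \autoref{covrel} to confirm $\emptyset_\succeq=B\subseteq\{1\}^\Cap\cup\{0\}$, which amounts to the elementary fact that in a Boolean algebra every nonzero element intersects the maximum.
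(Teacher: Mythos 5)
Your proposal is correct and follows essentially the same route as the paper, which proves this proposition in one line by combining \autoref{genBoolhomo} with the \eqref{1A} clause of \autoref{OriginalTight}; your observation that everything beyond \autoref{genBoolhomo} reduces to the behaviour at the top element, forced by \eqref{1A} in one direction, is exactly the paper's point. Your use of \autoref{betatight} with the witness $G=\{1\}$ for the converse direction is a minor cosmetic variation (the paper would instead invoke the ``iff'' of \eqref{1A} directly, noting that for a lattice homomorphism $\bigvee\beta[G]=\beta(\bigvee G)$), but the substance is identical.
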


\section{The Enveloping Boolean Algebra}

Next we construct a tight map from any given p0set $B$ to what might be called its `enveloping Boolean algebra' $\mathcal{RO}(B')$.  We then examine its universal properties.

First, let $B'=B\backslash\{0\}$ with the Alexandroff topology, where the closed sets are precisely the $\succeq$-closed sets, and consider the map $x\mapsto\{x\}^\succeq\backslash\{0\}$ from $B$ to $\mathcal{O}(B')$.

\begin{prp}\label{OpenTight}
The map $x\mapsto\{x\}^\succeq\backslash\{0\}$ is tight and coinitial.
\end{prp}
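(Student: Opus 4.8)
The plan is to use the very explicit structure of the Alexandroff topology on $B'$. Recall that there the closed sets are the $\succeq$-closed (i.e.\ up-closed) subsets, so the open sets are exactly the down-sets, the minimal open neighbourhood of a point $p\in B'$ is the principal down-set $\beta(p)=\{p\}^\succeq\setminus\{0\}$, and the closure of any $S\subseteq B'$ is $\{z\in B':\beta(z)\cap S\neq\emptyset\}$. In particular $(\beta(x))_{x\in B'}$ is a basis for $B'$, and since $\{0\}^\succeq=\{0\}$ we have $\beta(0)=\emptyset=0$ in $\mathcal{O}(B')$, so $\beta$ is a well-defined map of p0sets with $\beta(0)=0$. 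Note $\mathcal{O}(B')$ is treated as a p0set under $\subseteq$ with minimum $\emptyset$, and $W\C V$ in $\mathcal{O}(B')$ simply means $W\cap V\neq\emptyset$.

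Coinitiality is then immediate: given any nonempty open $O\in\mathcal{O}(B')$, pick $p\in O$; as $O$ is a down-set containing $p$, we get $\emptyset\neq\beta(p)\subseteq O$, which says exactly that $O\in(\beta[B]\setminus\{0\})^\preceq$. Hence $A\setminus\{0\}=(\beta[B]\setminus\{0\})^\preceq$, i.e.\ $\beta$ is coinitial.

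For tightness the key step is to rewrite the covering relation $\precapprox$ in the frame $A=\mathcal{O}(B')$ in topological terms. Unwinding \autoref{covrel} (with $\prec\,=\,\preceq\,=\,\subseteq$ on $A$), for finite $\mathcal{U},\mathcal{V}\subseteq\mathcal{O}(B')$ one has $\mathcal{U}\precapprox\mathcal{V}$ iff every nonempty open $W\subseteq\bigcap\mathcal{U}$ meets $\bigcup\mathcal{V}$, which, using that $\bigcap\mathcal{U}$ is open, is equivalent to $\bigcap\mathcal{U}\subseteq\overline{\bigcup\mathcal{V}}$. I then compute the two sides for $\mathcal{U}=\beta[F]$ and $\mathcal{V}=\beta[G]$. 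Directly from the definition of $\beta$, $\bigcap_{x\in F}\beta(x)=\{y\in B':\forall x\in F\,(y\preceq x)\}=F_\succeq\setminus\{0\}$; and a point $z\in B'$ lies in $\overline{\bigcup_{x\in G}\beta(x)}$ iff its minimal neighbourhood $\beta(z)$ meets some $\beta(x)$ with $x\in G$, i.e.\ iff there is a nonzero $w\preceq z,x$, i.e.\ iff $z\C x$ for some $x\in G$; thus $\overline{\bigcup\beta[G]}=G^\Cap$ (and $0\notin G^\Cap$). Combining, $\beta[F]\precapprox\beta[G]$ in $A$ reads $F_\succeq\setminus\{0\}\subseteq G^\Cap$, i.e.\ $F_\succeq\subseteq G^\Cap\cup\{0\}$, which is precisely $F\precapprox G$ in $B$. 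So $\beta$ in fact both preserves and reflects $\precapprox$ on $\mathcal{F}(B)$, giving \eqref{FpG} and hence tightness.

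There is no deep obstacle; the substance is the translation of the frame-level covering relation into the containment $\bigcap\mathcal{U}\subseteq\overline{\bigcup\mathcal{V}}$, after which everything is a routine unwinding of the Alexandroff structure. The only care required is the bookkeeping around the basepoint: checking $\beta(0)=\emptyset$, noting $0\notin G^\Cap$ so that the $\setminus\{0\}$ on one side matches the $\cup\{0\}$ on the other, and confirming the intersection/closure identities survive the degenerate case $F=\emptyset$, where $\bigcap\beta[F]$ is the top element $B'=F_\succeq\setminus\{0\}$.
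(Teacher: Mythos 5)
Your proof is correct, and its substance coincides with the paper's: both arguments ultimately rest on the observations that a lower bound of $\beta[F]$ in $\mathcal{O}(B')$ is exactly an open subset of $F_\succeq\setminus\{0\}$, and that an open set meets $\bigcup_{y\in G}\beta(y)$ exactly when it contains a point $\Cap$-intersecting some $y\in G$ (this is where the down-set property of Alexandroff-open sets enters). The packaging differs, though, and yours buys a little more. The paper proves only the one implication required by \eqref{FpG}: assuming $F\precapprox G$, it takes a nonempty open $O\subseteq\bigcap\beta[F]\subseteq G^\Cap$, picks a point $z\in O$, and uses a nonzero $w\preceq z,y$ (for some $y\in G$) as a witness that $O\cap\beta(y)\neq\emptyset$. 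You instead isolate the general characterization that, for finite $\mathcal{U}$, one has $\mathcal{U}\precapprox\mathcal{V}$ in $\mathcal{O}(B')$ iff $\bigcap\mathcal{U}\subseteq\overline{\bigcup\mathcal{V}}$, and then compute $\bigcap\beta[F]=F_\succeq\setminus\{0\}$ and $\overline{\bigcup\beta[G]}=G^\Cap$. This yields the stronger conclusion that $\beta$ both preserves and reflects $\precapprox$ on $\mathcal{F}(B)$, and it is the exact $\mathcal{O}(B')$-analogue of what the paper only establishes later, for $\mathcal{RO}(B')$, in \autoref{FGrho} (your closure computation is essentially its identity $Y^\Cap=(Y^{\succeq}\setminus\{0\})^{\preceq}$); the same characterization of $\precapprox$ via closures is also what the paper invokes in the proof of \autoref{OtoRO}. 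Finally, note that the paper's own proof addresses only tightness and leaves coinitiality tacit, whereas you verify it explicitly via minimal neighbourhoods; so on that point your write-up is the more complete one.
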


\begin{proof}
Take $F,G\in\mathcal{F}(B)$ with $F\precapprox G$.  If $O\in\mathcal{O}(B')$ with $O\subseteq\{x\}^\succeq$, for all $x\in F$, then $O\subseteq F^\succeq$ so $O\subseteq G^\Cap\backslash\{0\}$.  Thus we have $y\in G$ with $\emptyset\neq O\cap\{y\}^\succeq\backslash\{0\}\in\mathcal{O}(B')$ so $O\C\{y\}^\succeq\backslash\{0\}$.  Thus $\{\{x\}^\succeq\backslash\{0\}:x\in F\}\precapprox\{\{y\}^\succeq\backslash\{0\}:y\in G\}$.
\end{proof}

For any topological space $X$, recall that $O\in\mathcal{O}(X)$ is \emph{regular} if $O=\overline{O}^\circ$ or, equivalently, if $O=\overline{N}^\circ$ for any $N\subseteq X$.  The regular open sets $\mathcal{RO}(X)$ form a complete Boolean algebra w.r.t. $\subseteq$ such that, for $O\in\mathcal{RO}(X)$ and $\mathcal{N}\subseteq\mathcal{RO}(X)$,
\[\neg O=(X\setminus O)^\circ\qquad\text{and}\qquad\bigvee\mathcal{N}=\overline{\bigcup\mathcal{N}}^\circ.\]

\begin{prp}\label{OtoRO}
$O\mapsto\overline{O}^\circ$ is a tight coinitial representation of $\mathcal{O}(X)$ in $\mathcal{RO}(X)$.  \end{prp}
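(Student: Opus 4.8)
The plan is to dispatch the \emph{representation} and \emph{coinitial} clauses immediately and then concentrate all the work on \emph{tightness}. Write $\beta(O)=\overline{O}^\circ$. For the representation clause, $\mathcal{RO}(X)$ is a complete Boolean algebra and hence in particular a generalized Boolean algebra, while $\overline{\emptyset}^\circ=\emptyset$ gives $\beta(0)=0$, so $\beta$ is a representation in the sense of \autoref{tightdef}. For coinitiality the key observation is that $\beta$ fixes every regular open set: if $O\in\mathcal{RO}(X)$ then $O=\overline{O}^\circ=\beta(O)$. Since also $\beta(O)=\overline{O}^\circ\in\mathcal{RO}(X)$ for every $O\in\mathcal{O}(X)$, this shows $\beta$ maps $\mathcal{O}(X)$ \emph{onto} $\mathcal{RO}(X)$, whence $\beta[\mathcal{O}(X)]\backslash\{0\}=\mathcal{RO}(X)\backslash\{0\}$ is trivially $\preceq$-coinitial in $\mathcal{RO}(X)$.

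For tightness I would first reinterpret the covering relation $\precapprox$ topologically. In either $\mathcal{O}(X)$ or $\mathcal{RO}(X)$ finite meets are just intersections, so for finite families $C,D$ of (regular) open sets, $C_\succeq$ consists of the (regular) open sets contained in $\bigcap C$ and $D^\Cap$ of those meeting $\bigcup D$. A routine argument with points and neighbourhoods then yields the two implications I actually need: in $\mathcal{O}(X)$, $F\precapprox G$ forces $\bigcap F\subseteq\overline{\bigcup G}$ (shrink any open neighbourhood $V$ of $x\in\bigcap F$ to the nonempty open set $V\cap\bigcap F\subseteq\bigcap F$, which must meet $\bigcup G$); and in $\mathcal{RO}(X)$, the inclusion $\bigcap\beta[F]\subseteq\overline{\bigcup\beta[G]}$ conversely gives $\beta[F]\precapprox\beta[G]$ (any nonempty regular open $P\subseteq\bigcap\beta[F]$ contains a point of $\overline{\bigcup\beta[G]}$, so $P$ meets some member of $\beta[G]$). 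Thus tightness reduces to proving the single inclusion $\bigcap_{O\in F}\overline{O}^\circ\subseteq\overline{\bigcup_{N\in G}\overline{N}^\circ}$ from the hypothesis $\bigcap F\subseteq\overline{\bigcup G}$.

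The right-hand side collapses painlessly: since $G$ is finite and $\overline{\overline{N}^\circ}=\overline{N}$, we get $\overline{\bigcup_{N\in G}\overline{N}^\circ}=\bigcup_{N\in G}\overline{N}=\overline{\bigcup G}$. The main obstacle is the left-hand side, because $\overline{O}^\circ$ is in general strictly larger than $O$, so $\bigcap_{O\in F}\overline{O}^\circ$ need not sit inside $\bigcap F$ and the hypothesis cannot be applied directly. The device I would use is the density of each $O$ in $\overline{O}$: any nonempty open $W\subseteq\bigcap_{O\in F}\overline{O}^\circ$ meets $\bigcap F$. Indeed, enumerating $F=\{O_1,\dots,O_k\}$ and intersecting one factor at a time, $W\cap O_1\cap\cdots\cap O_j$ stays nonempty open and lies in $\overline{O_{j+1}}$, hence meets the dense set $O_{j+1}$; after $k$ steps $W\cap\bigcap F\neq\emptyset$. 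Granting this, fix $x\in\bigcap_{O\in F}\overline{O}^\circ$ and an arbitrary open neighbourhood $V$ of $x$; then $W=V\cap\bigcap_{O\in F}\overline{O}^\circ$ is a nonempty open subset of $\bigcap_{O\in F}\overline{O}^\circ$, so it contains some $y\in\bigcap F\subseteq\overline{\bigcup G}$, and therefore $V\ni y$ meets $\bigcup G$. As $V$ was arbitrary, $x\in\overline{\bigcup G}$, which is exactly the inclusion sought. This establishes tightness and completes the proposition.
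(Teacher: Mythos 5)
Your proof is correct, but it takes a genuinely different route from the paper's. The paper factors the tightness claim through \autoref{OriginalTight}: it first proves that $O\mapsto\overline{O}^\circ$ preserves binary meets, i.e. $\overline{O\cap N}^\circ=\overline{O}^\circ\cap\overline{N}^\circ$, then notes that $\{O\}\precapprox F$ means $O\subseteq\overline{\bigcup F}$ and hence $O\subseteq\overline{\bigcup F}^\circ=\bigvee_{N\in F}\overline{N}^\circ$, and invokes the meet-semilattice characterization of tight representations (the case $O=X$ supplying the condition \eqref{1A}). You instead verify the definition of $\precapprox$-preservation directly: you translate $\precapprox$ into topological terms on both sides ($F\precapprox G$ in $\mathcal{O}(X)$ gives $\bigcap F\subseteq\overline{\bigcup G}$, while $\bigcap\beta[F]\subseteq\overline{\bigcup\beta[G]}$ gives back $\beta[F]\precapprox\beta[G]$ in $\mathcal{RO}(X)$, using that finite meets in $\mathcal{RO}(X)$ are intersections), and then close the gap with the inclusion $\bigcap_{O\in F}\overline{O}^\circ\subseteq\overline{\bigcup G}$ via your iterated density argument. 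Interestingly, the topological engine is the same in both proofs: the paper's meet-preservation step uses exactly your ``intersect with one dense factor at a time'' trick (its auxiliary open set $P$ is first cut down by $O$, then by $N$), only for pairs rather than arbitrary finite families. What the paper's route buys is brevity and reuse \textendash\, \autoref{OriginalTight} absorbs the bookkeeping, including the distinction between tight and tightish. What your route buys is self-containment: it never needs the semilattice characterization, it treats the empty-$F$ case (hence full tightness rather than tightishness) uniformly within the same argument, and it makes explicit the representation and coinitiality clauses (via $\beta$ fixing $\mathcal{RO}(X)$ pointwise, hence being surjective), which the paper leaves implicit.
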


\begin{proof}
We first claim that $O\mapsto\overline{O}^\circ$ preserves meets.  For, given any $O,N\in\mathcal{O}(X)$,
\[\overline{O\cap N}^\circ\subseteq(\overline{O}\cap\overline{N})^\circ=\overline{O}^\circ\cap\overline{N}^\circ.\]
For the reverse inclusion, it suffices to show that $\overline{O}^\circ\cap\overline{N}^\circ\subseteq\overline{O\cap N}$ as taking interiors then yields $\overline{O}^\circ\cap\overline{N}^\circ\subseteq\overline{O\cap N}^\circ$.  If this inclusion failed, we would have $\emptyset\neq P=\overline{O}^\circ\cap\overline{N}^\circ\setminus\overline{O\cap N}\in\mathcal{O}(X)$.  As $P\subseteq\overline{O}$, $\emptyset\neq P\cap O\in\mathcal{O}(X)$.  Likewise, $P\cap O\subseteq P\subseteq\overline{N}$ so $\emptyset\neq P\cap O\cap N$, contradicting the definition of $P$.

Also, for $O\in\mathcal{O}(X)$ and $F\in\mathcal{F}(\mathcal{O}(X))$, $\{O\}\precapprox F$ means $O\subseteq\overline{\bigcup F}$ and hence $O\subseteq\overline{\bigcup F}^\circ=\bigvee_{O\in F}O$.  Thus $O\mapsto\overline{O}^\circ$ is tight, by \autoref{OriginalTight}.
\end{proof}

As in \cite[Ch II Lemma 3.3]{Kunen1980}, define $\rho:B\rightarrow\mathcal{RO}(B')$ by
\begin{equation}\label{betadef}
\rho(x)=\overline{\{x\}^\succeq}^\circ.
\end{equation}
By \autoref{OpenTight} and \autoref{OtoRO}, $\rho$ is tight.  In fact, more can be said.

\begin{prp}\label{FGrho}
For all $F,G\in\mathcal{F}(B)$,
\[F\precapprox G\qquad\Leftrightarrow\qquad\bigwedge\rho[F]\subseteq\bigvee\rho[G].\]
\end{prp}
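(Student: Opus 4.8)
The plan is to treat the two implications separately, obtaining $\Rightarrow$ almost for free from the tightness of $\rho$, and concentrating the real work on $\Leftarrow$, which I would prove contrapositively by a direct separation argument inside the Alexandroff space $B'$.

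For the forward implication I would invoke the fact, noted just before the statement, that $\rho$ is tight, being the composite of the tight maps from \autoref{OpenTight} and \autoref{OtoRO}. Thus $F\precapprox G$ yields $\rho[F]\precapprox\rho[G]$ in the Boolean algebra $\mathcal{RO}(B')$, and it only remains to translate $\precapprox$ into the Boolean order. Exactly as in the proof of \autoref{genBoolhomo}, in any generalized Boolean algebra one has $F'\precapprox G'\Leftrightarrow\bigwedge F'\preceq\bigvee G'$, since $F'_\succeq=[0,\bigwedge F']$ and, by distributivity, a nonzero $a\preceq\bigwedge F'$ meets some element of $G'$ precisely when $a\wedge\bigvee G'\neq 0$. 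Applying this to $\rho[F]$ and $\rho[G]$ gives $\bigwedge\rho[F]\subseteq\bigvee\rho[G]$.

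The substance is the converse. Arguing contrapositively, suppose $F\not\precapprox G$, so $F_\succeq\not\subseteq G^\Cap\cup\{0\}$: there is some $x\neq 0$ with $x\preceq f$ for every $f\in F$ and $x\perp g$ for every $g\in G$. I would take the nonempty basic open set $\{x\}^\succeq\backslash\{0\}$ of $B'$ (nonempty as it contains $x$) as a separating witness. Since $O\mapsto\overline{O}^\circ$ is monotone, $\rho$ is order preserving, and since a finite meet of regular open sets is their intersection, $x\preceq f$ for all $f$ gives $\{x\}^\succeq\backslash\{0\}\subseteq\rho(x)\subseteq\bigcap_{f\in F}\rho(f)=\bigwedge\rho[F]$. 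To finish I would show this witness is disjoint from $\bigvee\rho[G]=\overline{\bigcup_{g\in G}\rho(g)}^\circ$, using twice the elementary fact that an open set meets the closure of a set iff it meets the set itself. As $\bigvee\rho[G]\subseteq\overline{\bigcup_g\rho(g)}$ and the witness is open, disjointness from $\bigvee\rho[G]$ reduces to disjointness from each $\rho(g)\subseteq\overline{\{g\}^\succeq\backslash\{0\}}$, which in turn reduces to disjointness of the basic opens $\{x\}^\succeq\backslash\{0\}$ and $\{g\}^\succeq\backslash\{0\}$; but their intersection is exactly the set of nonzero common lower bounds of $x$ and $g$, which is empty precisely because $x\perp g$. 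Hence $\bigwedge\rho[F]\not\subseteq\bigvee\rho[G]$. The argument is uniform in the empty cases ($F=\emptyset$ gives $\bigwedge\rho[F]=B'$ while $x$ ranges over all nonzero elements, and $G=\emptyset$ gives $\bigvee\rho[G]=\emptyset$), so no separate bookkeeping is needed.

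The main obstacle I anticipate is purely the careful propagation of disjointness through the regularization $\overline{(\cdot)}^\circ$ and the join formula in $\mathcal{RO}(B')$: one must resist computing meets and joins of regular opens as naive intersections and unions, and instead lean on the openness of the witness to collapse the intervening closures. Once the ``open meets closure iff meets set'' reduction is in hand, the order-theoretic core, namely $\{x\}^\succeq\backslash\{0\}\cap\{g\}^\succeq\backslash\{0\}=\emptyset\Leftrightarrow x\perp g$, is immediate.
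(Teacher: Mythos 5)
Your proof is correct, but it is organized quite differently from the paper's. The paper proves both directions at once by an explicit computation in the Alexandroff space $B'$: it first records the formulas $\overline{Y}=Y^\preceq$, $Y^\circ=\{y:\{y\}^\succeq\backslash\{0\}\subseteq Y\}$ and $Y^\Cap=(Y^\succeq\setminus\{0\})^\preceq$, from which it derives order-theoretic descriptions of both sides, namely $\bigvee\rho[G]=\{y\in B':\{y\}^\succeq\subseteq G^\Cap\cup\{0\}\}$ and $\bigwedge\rho[F]=\overline{F_\succeq\backslash\{0\}}^\circ$, and then reads off the equivalence using $(F_\succeq)^\succeq=F_\succeq$. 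You instead treat the two implications asymmetrically: you get $\Rightarrow$ essentially for free by citing the already-established tightness of $\rho$ (the composite of \autoref{OpenTight} and \autoref{OtoRO}) together with the Boolean-algebra reformulation of $\precapprox$ as an order inequality (the finite-set version of the observation used in \autoref{genBoolhomo}, which indeed extends from pairs to finite sets, including the empty ones, since $\mathcal{RO}(B')$ is a complete Boolean algebra); and you prove $\Leftarrow$ contrapositively via a basic-open witness $\{x\}^\succeq\backslash\{0\}$, collapsing the regularizations with the fact that an open set meets a closure iff it meets the underlying set. Both arguments are sound; this is legitimate and non-circular, since the paper itself notes tightness of $\rho$ before stating \autoref{FGrho} precisely as the weaker fact. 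What each approach buys: the paper's explicit formulas make the regular-open structure of $\mathcal{RO}(B')$ transparent and reusable (the description of $\bigvee\rho[G]$ is itself informative), whereas your route is more modular, isolates exactly where new content beyond tightness lies (only in $\Leftarrow$), and replaces global closure/interior computations with a localized separation argument.
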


\begin{proof}
For any $Y\subseteq B'$, we see that
\begin{align*}
\overline{Y}&=Y^\preceq.\\
Y^\circ&=\{y\in B':\{y\}^\succeq\backslash\{0\}\subseteq Y\}.\\
Y^\Cap&=(Y^{\succeq}\setminus\{0\})^{\preceq}.\\
\text{Thus}\quad\bigvee\rho[G]&=\overline{\bigcup G^\succeq\backslash\{0\}}^\circ\\
&=\{y\in B':\{y\}^\succeq\subseteq (G^{\succeq}\setminus\{0\})^{\preceq}\}\\
&=\{y\in B':\{y\}^\succeq\subseteq G^\Cap\cup\{0\}\}.
\end{align*}
Also, as $O\mapsto\overline{O}^\circ$ is meet preserving, $\bigwedge\rho[F]=\bigcap_{x\in F}\overline{\{x\}^\succeq\backslash\{0\}}^\circ=\overline{F_\succeq\backslash\{0\}}^\circ$.  If $O\in\mathcal{RO}(B')$ then $\overline{F_\succeq\backslash\{0\}}^\circ\subseteq O$ iff $F_\succeq\backslash\{0\}\subseteq O$.  Thus, as $(F_\succeq)^\succeq=F_\succeq$,
\[\bigwedge\rho[F]\subseteq\bigvee\rho[G]\quad\Leftrightarrow\quad F_\succeq\backslash\{0\}\subseteq G^\Cap\quad\Leftrightarrow\quad F\precapprox G.\qedhere\]
\end{proof}

Thus a representation $\beta$ of a p0set $B$ is tight iff, for $F,G\in\mathcal{F}(B)$,
\[\bigwedge\rho[F]\subseteq\bigvee\rho[G]\qquad\Rightarrow\qquad\bigwedge\beta[F]\preceq\bigvee\beta[G].\]
We now show that $\rho$ restricted to the generalized Boolean subalgebra of $\mathcal{R}\mathcal{O}(B')$ generated by $\rho[B]$ is universal for tight(ish) representations.

\begin{thm}\label{Universality}
Let $\beta:B\to A$ be a representation of a p0set $B$, let $\rho$ be as in \eqref{betadef}, and let $S$ be the generalized Boolean subalgebra of $\mathcal{R}\mathcal{O}(B')$ generated by $\rho[B]$. Then $\beta$ is a tight(ish) representation iff $\beta$ factors through $\rho$, i.e. iff there is tight(ish) $\pi$ from $S$ to $A$ such that $\beta=\pi\circ\rho$.
\end{thm}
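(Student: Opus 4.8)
The plan is to prove both implications, with the forward direction (factoring implies tight(ish)) a short composition argument and the converse carrying the content. For the forward direction, recall that $\rho$ is tight (noted just after \eqref{betadef}); since tightness is inherited under restricting the codomain to any substructure containing the image, and $S$ inherits the meets and joins of $\mathcal{RO}(B')$, the map $\rho\colon B\to S$ is still tight. Hence if $\beta=\pi\circ\rho$ with $\pi$ tight(ish), then for $F\precapprox G$ in $\mathcal{F}(B)$ (with $F\neq\emptyset$ in the tightish case) we get $\rho[F]\precapprox\rho[G]$ in $S$, and applying $\pi$ yields $\beta[F]\precapprox\beta[G]$; the empty case $\emptyset\precapprox G$ needed for tightness is handled identically, using tightness of both $\rho$ and $\pi$.

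For the converse I would first record a normal form for $S$. Writing $\rho^{F,G}:=\bigwedge\rho[F]\setminus\bigvee\rho[G]$ for $F\in\mathcal{F}(B)\setminus\{\emptyset\}$ and $G\in\mathcal{F}(B)$, the generalized Boolean identities $(a\setminus b)\wedge(c\setminus d)=(a\wedge c)\setminus(b\vee d)$, $a\setminus(b\vee c)=(a\setminus b)\wedge(a\setminus c)$ and $a\setminus(c\setminus d)=(a\setminus c)\vee(a\wedge d)$ show (via $\rho^{F,G}\wedge\rho(y')=\rho^{F\cup\{y'\},G}$ and $\rho^{F,G}\setminus\rho(x')=\rho^{F,G\cup\{x'\}}$) that the finite joins $\bigvee_i\rho^{F_i,G_i}$ are closed under $\wedge$, $\vee$ and $\setminus$ and contain each $\rho(x)=\rho^{\{x\},\emptyset}$; hence they exhaust $S$, with first components always kept nonempty. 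I would then set $\pi\bigl(\bigvee_i\rho^{F_i,G_i}\bigr)=\bigvee_i\beta^{F_i,G_i}$, where $\beta^{F,G}:=\bigwedge\beta[F]\setminus\bigvee\beta[G]\in A$ is well defined precisely because $F\neq\emptyset$ and $A$ is a generalized Boolean algebra.

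The crux is well-definedness, and everything collapses to one implication: $\rho^{F,G}=0\Rightarrow\beta^{F,G}=0$. This is immediate, since $\rho^{F,G}=0$ means $\bigwedge\rho[F]\subseteq\bigvee\rho[G]$, which by \autoref{FGrho} is exactly $F\precapprox G$, whereas $\beta^{F,G}=0$ means $\bigwedge\beta[F]\preceq\bigvee\beta[G]$, i.e.\ $\beta[F]\precapprox\beta[G]$ in the generalized Boolean description of $\precapprox$ (as in the proof of \autoref{genBoolhomo}); so the implication is nothing but tightishness of $\beta$. To exploit it, observe that the three identities above reduce any difference $\rho^{F,G}\setminus\bigvee_j\rho^{F'_j,G'_j}$ to a finite join $\bigvee_{\mathcal E}\rho^{F^*,G^*}$ by a purely formal calculation; running the identical calculation with $\beta$ in place of $\rho$ produces $\bigvee_{\mathcal E}\beta^{F^*,G^*}$ with the same index set $\mathcal E$. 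As a join is $0$ iff each summand is, $\rho^{F,G}\le\bigvee_j\rho^{F'_j,G'_j}$ forces each $\rho^{F^*,G^*}=0$, whence each $\beta^{F^*,G^*}=0$ and so $\beta^{F,G}\le\bigvee_j\beta^{F'_j,G'_j}$. Applying this to both inequalities of an equality of normal forms gives well-definedness, and because the same identities compute $\wedge$, $\vee$, $\setminus$ for $\beta$ as for $\rho$, the well-defined $\pi$ is automatically a generalized Boolean homomorphism, hence tightish by \autoref{genBoolhomo}, with $\pi(\rho(x))=\beta^{\{x\},\emptyset}=\beta(x)$.

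Finally I would settle the tight versus tightish bookkeeping with \autoref{betatight}. Since every summand $\rho^{F,G}$ lies below some $\rho(x)$ with $x\in F$, the algebra $S$ has a maximum exactly when some $G\in\mathcal{F}(B)$ satisfies $\emptyset\precapprox G$, in which case $\bigvee\rho[G]=1_S$ and $\emptyset\precapprox\rho[G]$ in $S$; if $\beta$ is tight then $\emptyset\precapprox G$ gives $\emptyset\precapprox\beta[G]=\pi[\rho[G]]$, so \autoref{betatight} upgrades the tightish $\pi$ to a tight one, while if no such $G$ exists tight and tightish coincide for both $\beta$ and $\pi$. I expect the main obstacle to be exactly this well-definedness step: organizing the generalized Boolean difference calculus so that the reductions of the $\rho$- and $\beta$-expressions are manifestly the same, thereby isolating the single content-bearing implication, which is just tightishness of $\beta$ read off from \autoref{FGrho}.
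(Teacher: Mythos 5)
Your proof is correct, and it reaches the extension $\pi$ by a genuinely different route than the paper. The shared skeleton is the same: \autoref{FGrho} is the key lemma converting order relations among the $\rho$'s into covering relations in $B$, \autoref{genBoolhomo} turns a $0$-preserving lattice homomorphism into a tightish map, and \autoref{betatight} settles the tight/tightish bookkeeping (your criterion that $S$ has a maximum iff $\emptyset\precapprox G$ for some $G\in\mathcal{F}(B)$, with $1_S=\bigvee\rho[G]$, matches the paper's final paragraph). The difference is the middle. The paper extends $\pi$ in stages: from $\rho[B]$ to the meet semilattice $M$ it generates, then to the lattice $L$ of finite joins from $M$, and finally from $L$ to all of $S$ via an extension lemma for the sublattices $L_x=\{y\vee(z\backslash x):y,z\in L\}$ combined with a maximality (Zorn-type) argument; the well-definedness check at each lattice stage is exactly your single implication, via \autoref{FGrho} and tightishness. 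You instead exhibit a global disjunctive normal form for $S$ (finite joins of $\rho^{F,G}=\bigwedge\rho[F]\setminus\bigvee\rho[G]$ with $F\neq\emptyset$) and define $\pi$ in one shot, pushing all the work into well-definedness via equational identities applied identically in $\mathcal{RO}(B')$ and in $A$. Your route avoids the maximal-extension step entirely, makes the structure of $S$ explicit, and isolates the content-bearing input as $\rho^{F,G}=0\Rightarrow\beta^{F,G}=0$; its cost is the combinatorial care you correctly flag: the reduction must keep every first component nonempty (so that tightishness, which only covers $F\neq\emptyset$, applies \textendash\, this holds since each $F^*$ contains the original $F$) and must be purely formal so that it transfers verbatim to $A$. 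Two minor points to tighten: expanding $\rho^{F,G}\setminus\bigwedge\rho[F']$ requires the De Morgan identity $a\setminus(c\wedge d)=(a\setminus c)\vee(a\setminus d)$, which is not on your list of identities (though it is derivable from them); and your claim that a maximum of $S$ forces $\emptyset\precapprox G$ for some $G\in\mathcal{F}(B)$ implicitly applies \autoref{FGrho} to the singletons $\{y\}$, $y\in B$, together with the fact that in a generalized Boolean algebra $\emptyset\precapprox G'$ holds iff $\bigvee G'$ is a maximum \textendash\, both are fine but worth stating.
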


\begin{proof}
As $\rho$ is tight, if $\pi$ is tight(ish) then so is $\pi\circ\rho$.

Conversely, assume that $\beta$ is a tightish representation of $B$ in $A$.  In particular, if $\rho(x)=\rho(y)$ then $\beta(x)=\beta(y)$ so we can define $\pi:\rho[B]\rightarrow A$ by
\[\pi(\rho(x))=\beta(x).\]
We can then extend $\pi$ to the meet semilattice $M$ generated by $\rho[B]$ by defining $\pi(\bigwedge\rho[F])=\bigwedge\beta[F]$, for $F\in\mathcal{F}(B)$.  For if $\bigwedge\rho[F]=\bigwedge\rho[G]$ then $\bigwedge\rho[F]\subseteq\rho(y)$, for all $y\in G$, so, by \autoref{FGrho}, $\bigwedge\beta[F]\preceq\beta(y)$, as $\beta$ is tightish.  This means $\bigwedge\beta[F]\preceq\bigwedge\beta[G]$ and, by a dual argument, $\bigwedge\beta[F]\preceq\bigwedge\beta[G]$.  It the follows from the defintion that this extension to $M$ is meet preserving.

As $\mathcal{RO}(B')$ is distributive, the lattice $L$ generated by $\rho[B]$ is generated by joins of elements of $M$.  We claim we can extend $\pi$ to $L$ by defining, for $F\in\mathcal{F}(M)$,
\[\pi(\bigvee F)=\bigvee\pi[F].\]
For if $\bigvee F=\bigvee G$ then, for all $O\in F$, we have $H\in\mathcal{F}(B)$ with $\bigwedge\rho[H]=O$, and, for all $N\in G$, we have $H_N\in\mathcal{F}(B)$ with $\bigwedge\rho[H_N]=N$.  So if $x_N\in H_N$, for all $N\in G$, then $\bigwedge\rho[H]=O\subseteq\bigvee G\subseteq\bigvee_{N\in G}\rho(x_N)$ and hence \autoref{FGrho} and tightishness yields $\bigwedge\beta[H]\preceq\bigvee_{N\in G}\beta(x_N)$.  Thus distributivity yields
\begin{align*}
\pi(O)&=\pi(\bigwedge\rho[H])\\
&=\bigwedge\beta[H]\\
&\preceq\bigwedge_{\{(x_N):\forall N\in G(x_N\in H_N)\}}\bigvee_{N\in G}\beta(x_N)\\
&=\bigvee_{N\in G}\bigwedge\beta[H_N]\\
&=\bigvee_{N\in G}\pi[\bigwedge\rho[H_N]]\\
&=\bigvee\pi[G].
\end{align*}
Therefore $\bigvee\pi[F]\preceq\bigvee\pi[G]$ and, again by a dual argument, $\bigvee\pi[G]\preceq\bigvee\pi[F]$.

For any sublattice $L$ of $S$ and any $x\in L$, let $L_x$ be the sublattice
\[L_x=\{y\vee(z\backslash x):y,z\in L\}.\]
As $w=(w\wedge x)\vee(w\backslash x)$, for all $w\in S$, $L\subseteq L_x$ and it suffices to take $y\subseteq x$ above.  Then we claim that any lattice homomorphism $\pi$ from $L$ to $A$ can be extended to a lattice homomorphism $\pi'$ of $L_x$ given by
\[\pi'(y\vee(z\backslash x))=\pi(y)\vee(\pi(z)\backslash\pi(x)).\]
To see that this is well-defined, say $y\vee(z\backslash x)=y'\vee(z'\backslash x)$ with $y,y'\subseteq x$.  Then $y=x\wedge y=x\wedge(y\vee(z\backslash x))=x\wedge(y'\vee(z'\backslash x))=x\wedge y'=y'$.  Likewise $z\backslash x=z'\backslash x$, which is equivalent to $x\vee z=x\vee z'$.  As $\pi$ is a lattice homomorphism, $\pi(x)\vee\pi(z)=\pi(x)\vee\pi(z')$ and hence $\pi(z)\backslash\pi(x)=\pi(z')\backslash\pi(x)$.  
Also, for any $w\in L_x$, $w=(w\wedge x)\vee(w\backslash x)$ yields
\[\pi'(w)=\pi(w\wedge x)\vee(\pi(w)\backslash\pi(x))=(\pi(w)\wedge\pi(x))\vee(\pi(w)\backslash\pi(x))=\pi(w).\]
So $\pi'$ extends $\pi$ and likewise $\pi'$ is verified to be a lattice homomorphism.

Thus any maximal lattice homomorphism extension of $\pi$ defined on the sublattice generated by $\rho[B]$ as above must in fact be defined on the entirety of $S$.  Thus $\pi$ is tightish, by \autoref{genBoolhomo}.  If there is no $G\in\mathcal{F}(B)$ with $\emptyset\precapprox G$ then $\pi$ is even (vacuously) tight.  While if $G\in\mathcal{F}(B)$, $\emptyset\precapprox G$ and $\beta$ is tight then $\emptyset\precapprox\beta[G]=\pi\circ\rho[G]$ so $\pi$ is also tight, by \autoref{betatight}.
\end{proof}

It follows that tight(ish) maps between general p0sets are precisely those coming from tight(ish) maps of generalized Boolean algebras.

\begin{cor}
For p0sets $A$ and $B$ with $\rho_A:A\rightarrow S_A$ and $\rho_B:B\rightarrow S_B$ as above, $\beta:B\rightarrow A$ is tight(ish) iff $\rho_A\circ\beta=\pi\circ\rho_B$ for some tight(ish) $\pi:S_B\rightarrow S_A$.
\end{cor}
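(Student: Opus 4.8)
The plan is to deduce this directly from the universal property in \autoref{Universality}. Since $S_A$ is a generalized Boolean algebra, $\rho_A\circ\beta$ is a representation of the p0set $B$ in $S_A$, so \autoref{Universality} (applied to $B$ with codomain $S_A$) already gives that $\rho_A\circ\beta$ is tight(ish) if and only if $\rho_A\circ\beta=\pi\circ\rho_B$ for some tight(ish) $\pi\colon S_B\to S_A$. Thus everything reduces to the claim that $\beta$ is tight(ish) if and only if $\rho_A\circ\beta$ is, and I would isolate this as a lemma stating that $\rho_A$ both preserves and reflects the covering relation on finite subsets: for all $F',G'\in\mathcal{F}(A)$,
\[F'\precapprox G'\quad\text{in }A\qquad\Longleftrightarrow\qquad\rho_A[F']\precapprox\rho_A[G']\quad\text{in }S_A.\]
Granting the lemma and applying it with $F'=\beta[F]$ and $G'=\beta[G]$, the defining implications of tight(ish)ness for $\beta$ and for $\rho_A\circ\beta$ become literally the same family of statements, so the two notions coincide; the tight-versus-tightish bookkeeping (the role of $F=\emptyset$) is then inherited from \autoref{Universality}.

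The forward implication of the lemma is immediate, since $\rho_A$ is tight by \autoref{OpenTight} and \autoref{OtoRO}. For the reverse implication I would argue uniformly in $F'$ (empty or not). Suppose $\rho_A[F']\precapprox\rho_A[G']$ and take any nonzero $a\in A$ with $a\preceq x$ for every $x\in F'$ (a vacuous constraint when $F'=\emptyset$). As $\rho_A$ is order preserving (being a tight map into the separative algebra $S_A$, as in the opening of \autoref{OriginalTight}), $\rho_A(a)$ is a nonzero element of $S_A$ lying below every $\rho_A(x)$, so by hypothesis $\rho_A(a)\C\rho_A(y)$ for some $y\in G'$. Using that $\rho_A$ is coinitial (both factors in \eqref{betadef} being coinitial, by \autoref{OpenTight} and \autoref{OtoRO}), I pick a nonzero $\rho_A(c)\preceq\rho_A(a)\wedge\rho_A(y)$ in $S_A$; reading the two resulting inclusions $\rho_A(c)\subseteq\rho_A(a)$ and $\rho_A(c)\subseteq\rho_A(y)$ through \autoref{FGrho} and the singleton identity $\{x\}\precapprox\{z\}\Leftrightarrow\{x\}\precsim\{z\}$ from the remarks after \autoref{covrel} yields $c\precsim a$ and $c\precsim y$ in $A$. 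Chasing nonzero witnesses down these two instances of $\precsim$ (first $c\C a$ gives a nonzero $e\preceq c,a$; then $e\preceq c$ with $c\precsim y$ gives a nonzero $f\preceq e,y$) produces a nonzero $f\preceq a,y$, i.e. $a\C y$. Hence $a\in(G')^\Cap$, and as $a$ was an arbitrary nonzero lower bound of $F'$ this is exactly $F'\precapprox G'$.

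The crux is this reverse implication, and within it the empty case $F'=\emptyset$, which is precisely where tight and tightish part ways: a priori $\emptyset\precapprox\rho_A[G']$ in $S_A$ only says that $\bigvee\rho_A[G']$ is the top of $S_A$, whereas $\emptyset\precapprox G'$ in $A$ corresponds via \autoref{FGrho} to $\bigvee\rho_A[G']$ being all of $A'$, and the top of $S_A$ need not be $A'$. The argument above is engineered to sidestep this: by quantifying over an arbitrary nonzero $a\in A$ and descending through coinitiality, the empty and nonempty cases are treated in one stroke, so no separate appeal to \autoref{betatight} is required. The only other point demanding care is that $A$ is merely a p0set, so $\rho_A$ reflects order only up to $\precsim$ rather than $\preceq$; this is exactly why the witness-chasing through $\precsim$, rather than a naive $c\preceq a,y$, is needed. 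With the lemma established, combining it with the above instance of \autoref{Universality} closes the proof.
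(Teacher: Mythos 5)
Your proof is correct, and its skeleton matches the paper's: both arguments reduce the corollary, via \autoref{Universality} applied to the representation $\rho_A\circ\beta:B\rightarrow S_A$, to the statement that $\beta$ is tight(ish) iff $\rho_A\circ\beta$ is, i.e.\ that $\rho_A$ both preserves and reflects $\precapprox$ on finite subsets. Where you genuinely differ is in how the reflection is established. The paper disposes of it in one line: tight(ish)ness of $\rho_A\circ\beta$ gives $\rho_A[\beta[F]]\precapprox\rho_A[\beta[G]]$ in $S_A$, which it declares equivalent to $\beta[F]\precapprox\beta[G]$ ``by \autoref{FGrho}''; unpacking this requires identifying $\precapprox$ in $S_A$ with the inclusion $\bigwedge\rho_A[F']\subseteq\bigvee\rho_A[G']$ computed in $\mathcal{RO}(A')$, which for nonempty $F'$ is the generalized Boolean algebra computation from the proof of \autoref{genBoolhomo}, but for $F'=\emptyset$ silently passes over exactly the discrepancy you flag (the empty meet in $\mathcal{RO}(A')$ is $A'$, not a top of $S_A$, which need not exist or coincide). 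Your proof instead establishes the reflection by hand \textendash\, order preservation, coinitiality of $\rho_A[A]$, and chasing nonzero witnesses through two instances of $\precsim$ \textendash\, which is longer but self-contained, uniform in $F'$, and needs no separate appeal to \autoref{betatight} or to the empty-meet convention; your singleton use of \autoref{FGrho} is the only place the inclusion characterization enters. In short: same decomposition and same key lemma, but your proof of that lemma is more elementary and is careful precisely at the point where the paper's citation of \autoref{FGrho} is tersest, namely the case $F'=\emptyset$ that separates tight from tightish.
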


\begin{proof}
If $\beta$ is tight(ish) then so is $\rho_A\circ\beta$ and the required $\pi$ comes from \autoref{Universality}.  On the other hand, if $\rho_A\circ\beta=\pi\circ\rho_B$ and $\pi$ is tight(ish) then so is $\pi\circ\rho_B$ and hence $\rho_A\circ\beta$.  This means, for all $F,G\in\mathcal{F}(B)$ (with $F\neq\emptyset$), 
\[F\precapprox G\quad\Rightarrow\quad\rho_A\circ\beta[F]\precapprox\rho_A\circ\beta[G]\quad\Leftrightarrow\quad\beta[F]\precapprox\beta[G],\]
by \autoref{FGrho}, so $\beta$ is tight(ish) too.
\end{proof}

Thus we have a map $\beta\mapsto\pi_\beta$ taking any tight(ish) $\beta:B\rightarrow A$ to the unique tight(ish) $\pi_\beta:S_B\rightarrow S_A$ satisfying $\rho_A\circ\beta=\pi_\beta\circ\rho_B$.  To put this in category theory terms, let $\mathbf{P}$ denote the category of p0sets with tight(ish) morhpisms and let $\mathbf{G}$ denote its full subcategory of generalized Boolean algebras.  The above results are saying that we have a full functor $F$ from $\mathbf{P}$ onto $\mathbf{G}$ with $F(B)=S_B$ and $F(\beta)=\pi_\beta$ together with a natural transformation $\rho$ from the identity functor $I$ to $F$:
\[\begin{tikzcd}
B \arrow{r}{\beta} \arrow[swap]{d}{\rho_B} & A \arrow{d}{\rho_A} \\
S_B \arrow{r}{\pi_\beta} & S_A
\end{tikzcd}\]

\section{The Tight Spectrum}\label{TTS}

\begin{dfn}
Let $B$ be any p0set. The \emph{tight spectrum} $\check{B}$ is the space of non-zero tight characters on $B$ taken as a subspace of $\{0,1\}^B$ with the product topology.
\end{dfn}

We could equivalently call this the tightish spectrum, as any non-zero tightish character is coinitial and hence tight.  And if $\emptyset\precapprox G$, for some $G\in\mathcal{F}(B)$, then every tight character is automatically non-zero and so our definition of $\check{B}$ agrees with the definition of $\hat{B}_\mathrm{tight}$ from \cite[Definition 12.8]{Exel2008}.  When there is no $G\in\mathcal{F}(B)$ with $\emptyset\precapprox G$, we instead have $\hat{B}_\mathrm{tight}=\check{B^1}=$ the one-point compactification of $\check{B}$, where $B^1$ here denotes $B$ with a top element $1$ adjoined.

We can also view the tight spectrum as a certain Stone space, for by \autoref{Universality}, we can identify $\check{S}$ and $\check{B}$ via the map $\phi\mapsto\phi\circ\rho$.  We can then identify $\check{S}$ with $\hat{S}$ via the map $\phi\mapsto\phi^{-1}\{1\}$, as non-zero tight characters on generalized Boolean algebras are precisely the characteristic functions of ultrafilters (as lattice homomorphisms from generalized Boolean algebras to $\{0,1\}$ are precisely the characteristic functions of prime filters).

\begin{dfn}\label{pseudobasisdef}
For any topological space $X$, we call $B\subseteq\mathcal{O}(X)$ a \emph{pseudobasis} if
\begin{align}
\label{Minimum2}\tag{Minimum}&\emptyset\in B.\\
\label{Cover}\tag{Cover}&X=\bigcup B.\\
\label{Coinitiality2}\tag{Coinitiality}&\emptyset\neq O\in\mathcal{O}(X)\ \Rightarrow\ \exists N\in B\ (\emptyset\neq N\subseteq O).\\
\label{T0}\tag{$T_0$}&\forall x,y\in X\ \exists O\in B\ (x\notin O\ni y\text{ or }y\notin O\ni x).
\end{align}
\end{dfn}

For $x\in B$, let $O_x=\{\phi\in\check{B}:\phi(x)=1\}$.  Also let $\dot{B}$ denote the characteristic functions of maximal centred $C\subseteq B$, i.e. satisfying $F_\succeq\neq\{0\}$, for all $F\in\mathcal{F}(C)$.

\begin{prp}\label{checkB}
$\check{B}$ is $0$-dimensional locally compact Hausdorff with pseudobasis $(O_x)_{x\in B}$ and dense subset $\dot{B}$.
\end{prp}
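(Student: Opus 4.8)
The plan is to read off all three assertions from the identification $\check{B}\cong\hat{S}$ set up just above the statement, where $S$ is the generalized Boolean subalgebra of $\mathcal{RO}(B')$ generated by $\rho[B]$. First I would record that $\phi\mapsto\phi\circ\rho$ is a homeomorphism of $\check{S}$ onto $\check{B}$: it is a continuous bijection by \autoref{Universality}, and its inverse is continuous because $S$ is generated by $\rho[B]$, so pointwise convergence on $\rho[B]$ already forces convergence on all of $S$. Composing with the usual homeomorphism $\phi\mapsto\phi^{-1}\{1\}$ of $\check{S}$ onto $\hat{S}$ carries each $O_x$ to the basic set $O_{\rho(x)}$, since $\phi(x)=1$ iff $\rho(x)$ lies in the corresponding ultrafilter. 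As $S$ is a generalized Boolean algebra, hence a basic lattice whose $\prec$ is reflexive (end of \autoref{BL}), \autoref{Duality} together with the closing remark of \autoref{SS} shows that $\hat{S}$ is $0$-dimensional locally compact Hausdorff with $(O_s)_{s\in S}$ a basis of compact clopen sets. Transporting along the homeomorphism yields the stated topological properties of $\check{B}$.

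For the pseudobasis claim I would verify the four axioms of \autoref{pseudobasisdef} for $(O_x)_{x\in B}$ directly. Axiom \eqref{Minimum2} holds since $O_0=\emptyset$, as every character kills $0$; \eqref{Cover} holds since each $\phi\in\check{B}$ is non-zero and so lies in some $O_x$; and \eqref{T0} holds because distinct characters differ at some $x$, separating them via $O_x$. For \eqref{Coinitiality2} I would use that $\rho$ is coinitial, combining \autoref{OpenTight} and \autoref{OtoRO}: given a non-empty open $O\subseteq\check{B}\cong\hat{S}$, it contains a non-empty basic set $O_s$ with $s\neq0$, and coinitiality supplies $x$ with $0\neq\rho(x)\preceq s$, whence $\emptyset\neq O_{\rho(x)}\subseteq O_s$ by \eqref{capwedge} and \eqref{perpperp}; since $O_{\rho(x)}$ is the image of $O_x$, this is the required non-empty pseudobasic set inside $O$.

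The remaining, and main, task is density of $\dot{B}$, for which I would first show $\dot{B}\subseteq\check{B}$, i.e. that each maximal centred $C$ gives a non-zero tight character $\chi_C$. Non-triviality is clear: $0\notin C$ (as $\{0\}_\succeq=\{0\}$ is not centred) and $C\neq\emptyset$, so $\chi_C(0)=0$ and $\chi_C\neq0$. Unwinding \autoref{tightdef} in the two-element algebra, tightness of $\chi_C$ amounts to the implication
\[F\precapprox G\ \text{ and }\ F\subseteq C\qquad\Longrightarrow\qquad C\cap G\neq\emptyset,\qquad F,G\in\mathcal{F}(B).\]
To prove it, suppose $C\cap G=\emptyset$; by maximality each $g\in G$ admits a finite $F_g\subseteq C$ with $(F_g\cup\{g\})_\succeq=\{0\}$. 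Centredness of the finite set $F\cup\bigcup_{g\in G}F_g\subseteq C$ yields a non-zero common lower bound $p$; as $F\subseteq F\cup\bigcup_{g}F_g$, we get $0\neq p\in F_\succeq$, so $F\precapprox G$ produces $g\in G$ and $0\neq q\preceq p,g$. But then $q$ is a non-zero lower bound of $F_g\cup\{g\}$, contradicting $(F_g\cup\{g\})_\succeq=\{0\}$.

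Finally, every non-zero $x$ lies in a maximal centred set: $\{x\}$ is centred and unions of chains of centred sets are centred, so Zorn applies, and the resulting $\chi_C$ satisfies $\chi_C\in O_x\cap\dot{B}$. Since $(O_x)_{x\in B}$ is a pseudobasis, every non-empty open set contains a non-empty $O_x$ and hence meets $\dot{B}$, giving density. I expect the tightness of $\chi_C$ to be the one genuinely delicate point; the rest is bookkeeping transported through $\hat{S}$.
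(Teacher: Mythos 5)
Your proof is correct, but it takes a genuinely different route from the paper for everything except the density of $\dot{B}$. The paper never leaves the product-topology picture: tight characters are observed to form a closed (hence compact) subset of $\{0,1\}^B$, so removing the zero character gives local compactness; \eqref{Coinitiality2} is proved by a direct combinatorial argument (tightness of any $\phi$ in the basic open set determined by $F,G\in\mathcal{F}(B)$ forces $F\not\precapprox G$, yielding a non-zero $x\in F_\succeq\setminus G^\Cap$ with $O_x\subseteq O$ by order and orthogonality preservation of characters); and the one remaining issue, $O_x\neq\emptyset$, is exactly what the maximal-centred-set construction settles, giving density of $\dot{B}$ in the same stroke. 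You instead transport the topology and \eqref{Coinitiality2} along the identification $\check{B}\cong\check{S}\cong\hat{S}$ stated just before the proposition: \autoref{Universality} plus generation of $S$ by $\rho[B]$ gives the homeomorphism, \autoref{Duality} and the closing remark of \autoref{SS} give that $\hat{S}$ is $0$-dimensional locally compact Hausdorff with compact clopen basis $(O_s)_{s\in S}$, and coinitiality of $\rho$ gives \eqref{Coinitiality2}, with non-emptiness of $O_{\rho(x)}$ coming from ultrafilter existence inside \autoref{Duality} rather than from $\dot{B}$. Only for density do you fall back on essentially the paper's own argument (Zorn plus the contradiction between $F\precapprox G$, maximality of $C$ and centredness of a finite subset of $C$), and your unwinding of tightness of $\chi_C$, which explicitly covers the $F=\emptyset$ case, is if anything slightly cleaner than the paper's phrasing. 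What each approach buys: the paper's proof is self-contained and never has to verify that the two ``identifications'' preceding the proposition are actually homeomorphisms, whereas yours makes the proposition nearly a corollary of \autoref{Universality} and \autoref{Duality}, isolating density of $\dot{B}$ as the only genuinely new content. For full rigor you would just want to record the two facts you use implicitly: that tight characters on $S$ are generalized Boolean homomorphisms (\autoref{genBoolhomo}), so pointwise convergence on $\rho[B]$ determines convergence on all of $S$; and that the composite of the coinitial maps in \autoref{OpenTight} and \autoref{OtoRO} is coinitial because $O\mapsto\overline{O}^\circ$ is order preserving and sends no non-empty open set to $\emptyset$ (as $O\subseteq\overline{O}^\circ$). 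Both are true and your sketches point at the right reasons.
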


\begin{proof}
As $\{0,1\}^B$ is $0$-dimensional Hausdorff, so is $\check{B}$.  The tight characters are immediately seen to form a closed subset of $\{0,1\}^B$ so taking away the zero character still yields a locally compact space $\check{B}$.

As $O_0=\emptyset$, $(O_x)_{x\in B}$ satisfies \eqref{Minimum2}.  As every $\phi\in\check{B}$ has value $1$ for some $x\in X$, $(O_x)_{x\in B}$ satisfies \eqref{Cover}.  If $\phi,\psi\in\check{B}$ are distinct then $\phi(x)\neq\psi(x)$, for some $x\in B$, so $(O_x)_{x\in B}$ satisfies \eqref{T0}.  To see that $(O_x)$ satisfies \eqref{Coinitiality2}, take some non-empty basic open $O\subseteq\check{B}$, so we have $F,G\in\mathcal{F}(B)$ with
\[O=\{\phi\in\hat{B}:\phi[F]=\{1\}\text{ and }\phi[G]=\{0\}\}.\]  For any $\phi\in O$, $\phi[F]\not\precapprox\phi[G]$ so $F\not\precapprox G$, by tightness.  But this means we have non-zero $x\in F_\succeq\backslash G^\Cap$.  For any $\psi\in O_x$, $\psi[F]=\{1\}$, as $\psi$ is order preserving.  If $x\perp y$ then $\{x,y\}\precapprox0$ so $\{\psi(x),\psi(y)\}\precapprox\psi(0)=0$ and hence $\psi(x)\perp\psi(y)$, i.e. $\psi$ is also orthogonality preserving and hence $\psi[G]=\{0\}$ so $O_x\subseteq O$.  It only remains to show that $O_x$ is non-empty.  So let $\phi$ be the characteristic function of some maximal centred $C\subseteq B$ containing $x$.  If $\phi$ were not tight, then we would have $F,G\in\mathcal{F}(B)$ with $\phi[F]=\{1\}$, $\phi[G]=\{0\}$ and $F\precapprox G$.  As $C$ is maximal, for each $x\in G$ we have $H_x\in\mathcal{F}(C)$ with $(\{x\}\cup H_x)_\succeq=\{0\}$.  But then $(F\cup\bigcup H_x)_\succeq=\{0\}$, contradicting the fact $C$ is centred.  Thus $\phi\in O_x$ and this also shows that $\dot{B}$ is dense in $\check{B}$.
\end{proof}

\begin{thm}
If $B$ is a pseudobasis of compact clopen subsets of a topological space $X$ then we have a homeomorphism from $X$ onto $\check{B}$ given by $x\mapsto\phi_x$ where
\[\phi_x(O)=\begin{cases}1 &\text{if }x\in O\\ 0&\text{if }x\notin O.\end{cases}\]
\end{thm}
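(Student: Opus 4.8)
The plan is to first translate the abstract covering relation into honest set inclusions, and then to run the usual point-from-ultrafilter argument, with tightness supplying exactly the compactness-type input needed to locate the correct point.

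First I would record the translation that for finite $F,G\subseteq B$,
\[ F\precapprox G\qquad\Leftrightarrow\qquad \bigcap F\subseteq\overline{\bigcup G}, \]
where $\bigcap\emptyset=X$. Indeed $F_\succeq\setminus\{0\}$ is, by \eqref{Coinitiality2}, exactly the collection of nonempty members of $B$ contained in $\bigcap F$, while $G^\Cap$ consists of those members of $B$ meeting $\bigcup G$; a nonempty open set is contained in $\overline{\bigcup G}$ precisely when every nonempty open subset of it meets $\bigcup G$, which (again via \eqref{Coinitiality2}) is the displayed equivalence. Since each member of $B$ is compact clopen, $\bigcup G$ is clopen and $\overline{\bigcup G}=\bigcup G$, so in fact $F\precapprox G\Leftrightarrow\bigcap F\subseteq\bigcup G$. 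From this, $\phi_x$ is immediately seen to be a tight character: it is nonzero by \eqref{Cover}, and if $F\precapprox G$ with $x\in\bigcap F$ then $x\in\bigcup G$, so $\phi_x$ sends some element of $G$ to $1$; thus $x\mapsto\phi_x$ is well defined into $\check B$ (cf.\ \autoref{checkB}). Injectivity is immediate from \eqref{T0}: distinct $x,y$ are separated by some $O\in B$, whence $\phi_x(O)\neq\phi_y(O)$.

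Next I would show that $x\mapsto\phi_x$ is an embedding. As each $O\in B$ is clopen, the preimages of the subbasic clopen sets $\{\phi:\phi(O)=1\}$ and $\{\phi:\phi(O)=0\}$ of $\{0,1\}^B$ are $O$ and $X\setminus O$, both open; hence the map is continuous and the subspace topology pulls back to the topology $\tau'$ generated by $\{O,\,X\setminus O:O\in B\}$. It therefore remains to see that $\tau'$ is the topology of $X$, and only the nontrivial inclusion needs the hypotheses. Given open $V\ni x$, pick $O\in B$ with $x\in O$ by \eqref{Cover}; then $O\setminus V$ is a compact subset of $O$. Using \eqref{T0} together with clopenness, for each $y\in O\setminus V$ choose $W_y\in\{O_y,\,X\setminus O_y\}$, where $O_y\in B$ separates $x$ and $y$, so that $x\in W_y\not\ni y$; the sets $O\setminus W_y$ then cover $O\setminus V$, so a finite subcover yields a clopen $W=O\cap\bigcap_i W_{y_i}\in\tau'$ with $x\in W\subseteq V$. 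Thus $V\in\tau'$ and $x\mapsto\phi_x$ is an open embedding.

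The main work is surjectivity. Given a nonzero tight character $\phi$, put $U=\phi^{-1}\{1\}$ and consider the family of closed sets $\{O:O\in U\}\cup\{X\setminus N:N\in B\setminus U\}$. The key point is that this family has the finite intersection property, and here is exactly where tightness is used: a finite intersection has the form $\bigcap F\setminus\bigcup G$ with $F\subseteq U$ (nonempty, since $\phi\neq0$) and $G\subseteq B\setminus U$; were it empty we would have $\bigcap F\subseteq\bigcup G$, i.e.\ $F\precapprox G$ by the first paragraph, and then tightness would force some member of $G$ into $U$, a contradiction (the case $G=\emptyset$ being the assertion $\bigcap F\neq\emptyset$). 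Intersecting everything with a fixed compact $O_0\in U$ and invoking compactness, the whole family has a common point $x$; by construction $x\in O$ for all $O\in U$ and $x\notin N$ for all $N\notin U$, so $U=\{O\in B:x\in O\}$ and $\phi=\phi_x$. Combined with the previous paragraph this shows $x\mapsto\phi_x$ is a homeomorphism of $X$ onto $\check B$. I expect the one genuinely delicate step to be this choice of point: naively any point of $\bigcap_{O\in U}O$ will not do, as it may lie in spurious members of $B$ outside $U$, and the correct point is singled out only by simultaneously imposing the complementary conditions $x\notin N$ for $N\notin U$, whose consistency is precisely the finite intersection property secured by tightness.
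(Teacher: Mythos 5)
Your proposal is correct and follows essentially the same route as the paper: the same key translation of $\precapprox$ into the inclusion $\bigcap F\subseteq\bigcup G$, the same compactness-plus-tightness finite-intersection argument to recover the point determined by a tight character (including the complementary conditions $x\notin N$ for $N\notin U$), and the same compactness-plus-$T_0$ argument for openness. The only cosmetic differences are that you split the paper's single ``unique point'' claim into separate injectivity and surjectivity steps, and you phrase openness as equality of the original topology with the one generated by $\{O,X\setminus O:O\in B\}$ rather than as the map being open.
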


\begin{proof}
For any $F,G\in\mathcal{F}(B)$, we claim that
\[F\precapprox G\qquad\Leftrightarrow\qquad\bigcap F\subseteq\bigcup G.\]
If $\bigcap F\subseteq\bigcup G$ then, for any non-empty $O\in F_\subseteq$, we have $O\subseteq\bigcap F\subseteq\bigcup G$ so $\emptyset\neq O\cap\bigcup G=\bigcup_{N\in G}O\cap N$.  Thus $\emptyset\neq O\cap N$, for some $N\in G$, and hence $O\C N$ in $B$, by \eqref{Coinitiality2}, i.e. $O\in G^\Cap$ so $F\precapprox G$.  Conversely, if $\bigcap F\nsubseteq\bigcup G$ then $\emptyset\neq\bigcap F\setminus\bigcup G$ so \eqref{Coinitiality2} yields non-empty $O\in B$ with $O\subseteq\bigcap F\setminus\bigcup G$, i.e. $O\in F_\subseteq\setminus(G^\Cap\cup\{\emptyset\})$ so $F\not\precapprox G$.  Thus, for all $x\in X$, the definition of $\phi_x$ yields
\begin{align*}
F\precapprox G\qquad\qquad&\Leftrightarrow&\bigcap F\ &\subseteq\ \bigcup G.\\
&\Rightarrow&x\in\bigcap F\ &\Rightarrow\ x\in \bigcup G.\\
&\Rightarrow&\forall O\in F\ \phi_x(O)=1\ &\Rightarrow\ \exists N\in G\ \phi_x(N)=1\\
&\Rightarrow&\bigwedge\phi_x[F]\ &\preceq\ \bigvee\phi_x[G]\\
&\Rightarrow&\phi_x[F]\ &\precapprox\ \phi_x[G].
\end{align*}
Thus $\phi_x$ is tight and also non-zero, by \eqref{Cover}, so $\phi_x\in\check{B}$.

We next claim that, for any $\phi\in\check{B}$, there is a unique $\{x\}$ such that
\[\{x\}=\bigcap_{\substack{\phi(O)=1\\ \phi(N)=0}}O\backslash N.\]
As $\phi\neq0$ and the elements of $B$ are compact clopen, if the intersection were empty then it would be empty for some finite subset, i.e. we would have $F,G\in\mathcal{F}(B)$ with $\phi[F]=\{1\}$, $\phi[G]=\{0\}$ and $\emptyset=\bigcap_{O\in F,N\in G}O\backslash N$.  But this means $\bigwedge\phi[F]=1\npreceq0=\bigvee\phi[G]$ and $\bigcap F\subseteq\bigcup G$, contradicting the tightness of $\phi$.  On other other hand, the intersection can not contain more than one point, by \eqref{T0}.  This proves the claim, which means $\phi=\phi_x$.  Thus $x\mapsto\phi_x$ is a bijection from $X$ to $\check{B}$.

Now say we have $x\in M\in\mathcal{O}(X)$.  By \eqref{T0},
\[\emptyset=\bigcap_{\substack{x\in O\in B\\ x\notin N\in B\\\text{or }N=M}}O\backslash N.\]
As each $O\in B$ is compact clopen and $B$ satisfies \eqref{Cover}, some finite subset has empty intersection, i.e. we have $F,G\in\mathcal{F}(B)$ with $x\in\bigcap F$, $x\notin\bigcup G$ and $\bigcap_{O\in F,N\in G}O\backslash N\subseteq M$.  As $x$ and $M$ were arbitrary, this is saying $x\mapsto\phi_x$ is an open mapping.  As each $O\in B$ is clopen, $x\mapsto\phi_x$ is also continuous and hence a homeomorphism.
\end{proof}

\begin{cor}\label{pseudochar}
Separative p0sets characterize compact clopen pseudobases of necessarily $0$-dimensional locally compact Hausdorff topological spaces.  More precisely: If $B$ is a compact clopen pseudobasis of $X$ then $(B,\subseteq)$ is separative and $\check{B}$ is homeomorphic to $X$, while if $(B,\preceq)$ is separative then $\check{B}$ has compact clopen pseudobasis $(O_x)_{x\in B}$ order isomorphic to $B$.
\end{cor}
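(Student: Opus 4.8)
The plan is to reduce everything to results already in hand and isolate the genuinely new content. The homeomorphism $\check{B}\cong X$ asserted in the first half is exactly the content of the preceding theorem, and the fact that $(O_x)_{x\in B}$ is a pseudobasis of the ($0$-dimensional, locally compact, Hausdorff) space $\check{B}$ is \autoref{checkB}. So I only need to prove two things: (a) that a compact clopen pseudobasis $B$ of $X$ is separative under $\subseteq$, and (b) that when $(B,\preceq)$ is separative each $O_x$ is compact clopen and $x\mapsto O_x$ is an order isomorphism onto its range. Together these two directions yield the stated characterization.

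For (a) I would argue by contraposition against \eqref{Separativity}. Suppose $O,N\in B$ with $O\not\subseteq N$. Since $N$ is clopen, $O\setminus N=O\cap(X\setminus N)$ is a nonempty open set, so \eqref{Coinitiality2} yields a nonzero $V\in B$ with $V\subseteq O\setminus N$. Then $V\subseteq O$, while $V\cap N=\emptyset$ forces $V\perp N$, since any nonzero $W\in B$ below both $V$ and $N$ would lie in $V\cap N=\emptyset$. Thus the hypothesis of \eqref{Separativity} fails for the pair $(O,N)$, which is precisely separativity. Note this uses only clopenness and coinitiality, not compactness.

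For (b), clopenness of $O_x$ is immediate, as it is the trace on $\check{B}$ of the clopen set $\{\phi:\phi(x)=1\}\subseteq\{0,1\}^B$. Compactness follows because the tight characters form a closed, hence compact, subset of $\{0,1\}^B$; intersecting with $\{\phi:\phi(x)=1\}$ gives a compact set that omits the zero character and so coincides with $O_x$ as a subset of $\check{B}$. For the order isomorphism, one direction uses that tight characters are order preserving: since $B$ is separative, $x\preceq y$ gives $\{x\}\precapprox\{y\}$ by the observations after \autoref{covrel}, whence $\{\phi(x)\}\precapprox\{\phi(y)\}$ and, by separativity of $\{0,1\}$, $\phi(x)\preceq\phi(y)$ for every $\phi\in\check{B}$, so $O_x\subseteq O_y$. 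Conversely, if $x\not\preceq y$ then \eqref{Separativity} supplies a nonzero $v\preceq x$ with $v\perp y$; extending $\{v\}$ to a maximal centred $C\ni v$, its characteristic function $\phi$ is tight (as shown in the proof of \autoref{checkB}), satisfies $\phi(x)=1$ by order preservation, and has $\phi(y)=0$ because $y\in C$ together with $v\in C$ would contradict $v\perp y$ and the centredness of $C$. Hence $\phi\in O_x\setminus O_y$, so $O_x\not\subseteq O_y$. This makes $x\mapsto O_x$ an order embedding, and antisymmetry of $\subseteq$ then yields injectivity, so it is an order isomorphism onto $\{O_x:x\in B\}$.

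The main obstacle is the order-reflecting direction $O_x\subseteq O_y\Rightarrow x\preceq y$, as it is the only place the full strength of separativity is required: separativity is exactly what guarantees a witness $v$ below $x$ but orthogonal to $y$, which can then be extended to a maximal centred set whose characteristic function is a tight character separating $x$ from $y$. Everything else — clopenness and compactness of $O_x$, the easy order direction, separativity in (a), and the entire first half — is routine bookkeeping layered on top of the preceding theorem and \autoref{checkB}.
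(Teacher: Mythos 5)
Your proposal is correct and follows essentially the same route as the paper: separativity of a compact clopen pseudobasis via \eqref{Coinitiality2} applied to the open set $O\setminus N$, and the order isomorphism by using \eqref{Separativity} to produce a nonzero $v\preceq x$ with $v\perp y$ and then a character in $\dot{B}$ (the characteristic function of a maximal centred set containing $v$) lying in $O_x\setminus O_y$, with everything else delegated to \autoref{checkB} and the preceding theorem. Your explicit verification that each $O_x$ is compact and clopen in $\check{B}$ is a detail the paper leaves implicit, but it is not a different approach.
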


\begin{proof}
If $B$ is a compact clopen pseudobasis of $X$ then, for any $O,N\in B$ with $O\nsubseteq N$, we see that $\emptyset\neq O\backslash N\in\mathcal{O}(X)$.  By \eqref{Coinitiality2}, we then have non-empty $M\in B$ with $M\subseteq O\backslash N$, so $B$ is separative.  By \autoref{checkB}, $X$ is homeomorphic to $\check{B}$, which is $0$-dimensional locally compact Hausdorff, by \autoref{checkB}.

If $(B,\preceq)$ is separative then, whenever $x\not\preceq y$, we have non-zero $z\preceq x$ with $y\perp z$.  We can then take $\phi\in\dot{B}$ with $\phi(z)=1$ so $\phi\in O_x\backslash O_y$ and hence $O_x\nsubseteq O_y$.  Conversely, if $x\preceq y$ then $O_x\subseteq O_y$ so
\begin{equation}\label{xOxIso}
x\preceq y\qquad\Leftrightarrow\qquad O_x\subseteq O_y.\qedhere
\end{equation}
\end{proof}

We finish with a note on \eqref{Separativity}, which is the standard term in set theory (see \cite[Ch II Exercise (15)]{Kunen1980}), and some other closely related conditions.  First note that \eqref{xOxIso} implies that $x\mapsto O_x$ is injective, as $O_x=O_y$ then implies $x\preceq y\preceq x$ so $x=y$.  This is equivalent to saying that $\rho$ from \eqref{betadef} is injective.  This, in turn, implies that $B$ is `section semicomplemented' in the sense of \cite[Definition 4.17]{MaedaMaeda1970}, specifically
\[\label{SSC}\tag{SSC}x\neq y\preceq x\quad\Rightarrow\quad\exists z\neq0\ (y\perp z\preceq x).\]
Equivalently, this means equality=density from \cite[Definition 11.10]{Exel2008}.  So
\[\eqref{Separativity}\quad\Rightarrow\quad\rho\text{ is injective}\quad\Rightarrow\quad\eqref{SSC},\]
for general p0set $B$, and they are all equivalent when $B$ is a meet semilattice, by \cite[Proposition 9]{AkemannBice2014}.  This resolves the questions in section 7 of \cite{Exel2007} by providing the converse to \cite[Proposition 11.11]{Exel2008}.  When restricted to lattices, \eqref{Separativity} is also sometimes called `Wallman's Disjunction Property', having first appeared in \cite{Wallman1938}, which is also the dual to `subfit' as defined in \cite[Ch V \S1]{PicadoPultr2012}.

\bibliography{maths}{}

\newcommand{\etalchar}[1]{$^{#1}$}
\begin{thebibliography}{GHK{\etalchar{+}}03}

\bibitem[AB15]{AkemannBice2014}
Charles~A. Akemann and Tristan Bice.
\newblock Hereditary {$\rm C^*$}-subalgebra lattices.
\newblock {\em Adv. Math.}, 285:101--137, 2015.
\newblock \href {http://dx.doi.org/10.1016/j.aim.2015.07.027}
  {\path{doi:10.1016/j.aim.2015.07.027}}.

\bibitem[CMS13]{CirauloMaiettiSambin2013}
Francesco Ciraulo, Maria~Emilia Maietti, and Giovanni Sambin.
\newblock Convergence in formal topology: a unifying notion.
\newblock {\em J. Log. Anal.}, 5:Paper 2, 45, 2013.
\newblock \href {http://dx.doi.org/10.4115/jla.2013.5.2}
  {\path{doi:10.4115/jla.2013.5.2}}.

\bibitem[DM14]{DonsigMilan2014}
Allan~P Donsig and David Milan.
\newblock Joins and covers in inverse semigroups and tight-algebras.
\newblock {\em Bulletin of the Australian Mathematical Society},
  90(01):121--133, 2014.
\newblock \href {http://dx.doi.org/10.1017/S0004972713001111}
  {\path{doi:10.1017/S0004972713001111}}.

\bibitem[Exe08]{Exel2008}
Ruy Exel.
\newblock Inverse semigroups and combinatorial {$C^\ast$}-algebras.
\newblock {\em Bull. Braz. Math. Soc. (N.S.)}, 39(2):191--313, 2008.
\newblock \href {http://dx.doi.org/10.1007/s00574-008-0080-7}
  {\path{doi:10.1007/s00574-008-0080-7}}.

\bibitem[Exe09]{Exel2007}
R.~Exel.
\newblock Tight representations of semilattices and inverse semigroups.
\newblock {\em Semigroup Forum}, 79(1):159--182, 2009.
\newblock \href {http://dx.doi.org/10.1007/s00233-009-9165-x}
  {\path{doi:10.1007/s00233-009-9165-x}}.

\bibitem[FHL{\etalchar{+}}16]{FHLRTVW2016}
I.~Farah, B.~Hart, M.~Lupini, L.~Robert, A.~Tikuisis, A.~Vignati, and
  W.~Winter.
\newblock The model theory of nuclear $\mathrm{C}^*$-algebras, 2016.
\newblock \href {http://arxiv.org/abs/1602.08072} {\path{arXiv:1602.08072}}.

\bibitem[FHS14]{FarahHartSherman2014}
Ilijas Farah, Bradd Hart, and David Sherman.
\newblock Model theory of operator algebras {II}: model theory.
\newblock {\em Israel Journal of Mathematics}, 201(1):477--505, 2014.
\newblock \href {http://dx.doi.org/10.1007/s11856-014-1046-7}
  {\path{doi:10.1007/s11856-014-1046-7}}.

\bibitem[GHK{\etalchar{+}}03]{GierzHofmannKeimelLawsonMisloveScott2003}
G.~Gierz, K.~H. Hofmann, K.~Keimel, J.~D. Lawson, M.~Mislove, and D.~S. Scott.
\newblock {\em Continuous lattices and domains}, volume~93 of {\em Encyclopedia
  of Mathematics and its Applications}.
\newblock Cambridge University Press, Cambridge, 2003.
\newblock \href {http://dx.doi.org/10.1017/CBO9780511542725}
  {\path{doi:10.1017/CBO9780511542725}}.

\bibitem[GL13]{Goubault2013}
Jean Goubault-Larrecq.
\newblock {\em Non-{H}ausdorff topology and domain theory}, volume~22 of {\em
  New Mathematical Monographs}.
\newblock Cambridge University Press, Cambridge, 2013.
\newblock [On the cover: Selected topics in point-set topology].
\newblock \href {http://dx.doi.org/10.1017/CBO9781139524438}
  {\path{doi:10.1017/CBO9781139524438}}.

\bibitem[HL78]{HofmannLawson1978}
Karl~H Hofmann and Jimmie~D Lawson.
\newblock The spectral theory of distributive continuous lattices.
\newblock {\em Transactions of the American Mathematical Society},
  246:285--310, 1978.

\bibitem[Joh86]{Johnstone1986}
Peter~T Johnstone.
\newblock {\em Stone spaces}, volume~3.
\newblock Cambridge University Press, 1986.

\bibitem[KL16]{KudryavtsevaLawson2016}
Ganna Kudryavtseva and Mark~V. Lawson.
\newblock Boolean sets, skew {B}oolean algebras and a non-commutative {S}tone
  duality.
\newblock {\em Algebra Universalis}, 75(1):1--19, 2016.
\newblock \href {http://dx.doi.org/10.1007/s00012-015-0361-0}
  {\path{doi:10.1007/s00012-015-0361-0}}.

\bibitem[KR16]{KaniaRmoutil2016}
Tomasz Kania and Martin Rmoutil.
\newblock Recovering a compact hausdorff space $x$ from the compatibility
  ordering on $c(x)$, 2016.
\newblock \href {http://arxiv.org/abs/1610.07842} {\path{arXiv:1610.07842}}.

\bibitem[Kun80]{Kunen1980}
Kenneth Kunen.
\newblock {\em Set theory: An introduction to independence proofs}, volume 102
  of {\em Studies in Logic and the Foundations of Mathematics}.
\newblock North-Holland Publishing Co., Amsterdam, 1980.

\bibitem[Mar02]{Marker2002}
David Marker.
\newblock {\em Model theory}, volume 217 of {\em Graduate Texts in
  Mathematics}.
\newblock Springer-Verlag, New York, 2002.
\newblock An introduction.

\bibitem[MM70]{MaedaMaeda1970}
F.~Maeda and S.~Maeda.
\newblock {\em Theory of symmetric lattices}.
\newblock Die Grundlehren der mathematischen Wissenschaften, Band 173.
  Springer-Verlag, New York, 1970.

\bibitem[MR15]{MarraReggio2015}
Vincenzo Marra and Luca Reggio.
\newblock Stone duality above dimension zero: Axiomatising the algebraic theory
  of c(x), 2015.
\newblock \href {http://arxiv.org/abs/1508.07750} {\path{arXiv:1508.07750}}.

\bibitem[PP12]{PicadoPultr2012}
Jorge Picado and Ale{\v{s}} Pultr.
\newblock {\em Frames and locales: Topology without points}.
\newblock Frontiers in Mathematics. Birkh\"auser/Springer Basel AG, Basel,
  2012.
\newblock \href {http://dx.doi.org/10.1007/978-3-0348-0154-6}
  {\path{doi:10.1007/978-3-0348-0154-6}}.

\bibitem[Rus16]{Russo2016}
Ciro Russo.
\newblock An extension of {S}tone {D}uality to fuzzy topologies and
  {MV}-algebras.
\newblock {\em Fuzzy Sets and Systems}, 303:80--96, 2016.
\newblock \href {http://dx.doi.org/10.1016/j.fss.2015.11.011}
  {\path{doi:10.1016/j.fss.2015.11.011}}.

\bibitem[{Shi}52]{Shirota1952}
Taira {Shirota}.
\newblock {A generalization of a theorem of I. Kaplansky.}
\newblock {\em {Osaka Math. J.}}, 4:121--132, 1952.
\newblock URL: \url{http://projecteuclid.org/euclid.ojm/1200687806}.

\bibitem[Ste99]{Stern1999}
Manfred Stern.
\newblock {\em Semimodular lattices}, volume~73 of {\em Encyclopedia of
  Mathematics and its Applications}.
\newblock Cambridge University Press, Cambridge, 1999.
\newblock Theory and applications.
\newblock \href {http://dx.doi.org/10.1017/CBO9780511665578}
  {\path{doi:10.1017/CBO9780511665578}}.

\bibitem[Sto36]{Stone1936}
M.~H. Stone.
\newblock The theory of representations for {B}oolean algebras.
\newblock {\em Trans. Amer. Math. Soc.}, 40(1):37--111, 1936.
\newblock \href {http://dx.doi.org/10.2307/1989664}
  {\path{doi:10.2307/1989664}}.

\bibitem[Wal38]{Wallman1938}
Henry Wallman.
\newblock Lattices and topological spaces.
\newblock {\em Ann. of Math. (2)}, 39(1):112--126, 1938.
\newblock \href {http://dx.doi.org/10.2307/1968717}
  {\path{doi:10.2307/1968717}}.

\bibitem[Wol56]{Wolk1956}
E.~S. Wolk.
\newblock Some representation theorems for partially ordered sets.
\newblock {\em Proc. Amer. Math. Soc.}, 7:589--594, 1956.
\newblock \href {http://dx.doi.org/10.1090/S0002-9939-1956-0082957-8}
  {\path{doi:10.1090/S0002-9939-1956-0082957-8}}.

\end{thebibliography}
\bibliographystyle{alphaurl}

\end{document}